\setlist[enumerate]{topsep=2pt, itemsep=2pt}
\newtheorem{thm}{Theorem}[section]
\newtheorem{thrm}[thm]{Theorem}
\newtheorem{prop}[thm]{Proposition}
\newtheorem{lem}[thm]{Lemma}
\theoremstyle{definition}
\newtheorem{defn}[thm]{Definition}
\theoremstyle{remark}
\newtheorem{rem}[thm]{Remark}
\numberwithin{equation}{section}
\let\originalleft\left
\let\originalright\right
\renewcommand{\left}{\mathopen{}\mathclose\bgroup\originalleft}
\renewcommand{\right}{\aftergroup\egroup\originalright}
\DeclareMathOperator{\pic}{Pic}
\DeclareMathOperator{\gal}{Gal}
\DeclareMathOperator{\Div}{div}
\DeclareMathOperator{\spec}{Spec}
\DeclareMathOperator{\Br}{Br}
\DeclareMathOperator{\inv}{inv}
\DeclareMathOperator{\ind}{Ind}
\DeclareMathOperator{\im}{im}
\DeclareMathOperator{\Ho}{H}
\DeclareMathOperator{\divisor}{div}
\DeclareMathOperator{\Gal}{Gal}
\DeclareMathOperator{\Hom}{Hom}
\DeclareMathOperator{\Pic}{Pic}
\DeclareMathOperator{\Res}{Res}
\newcommand{\gm}{\mathbb{G}_m}
\def\Z{\ifmmode{{\mathbb Z}}\else{${\mathbb Z}$}\fi}
\def\Q{\ifmmode{{\mathbb Q}}\else{${\mathbb Q}$}\fi}
\def\C{\ifmmode{{\mathbb C}}\else{${\mathbb C}$}\fi}
\def\P{\ifmmode{{\mathbb P}}\else{${\mathbb P}$}\fi}
\def\H{\ifmmode{{\mathrm H}}\else{${\mathrm H}$}\fi}
\def\G{\ifmmode{{\mathbb G}}\else{${\mathbb G}$}\fi}
\def\R{\ifmmode{{\mathbb R}}\else{${\mathbb R}$}\fi}
\def\F{\ifmmode{{\mathbb F}}\else{${\mathbb F}$}\fi}
\def\N{\ifmmode{{\mathbb N}}\else{${\mathbb N}$}\fi}
\def\O{\ifmmode{{\calO}}\else{${\calO}$}\fi}
\def\D{\ifmmode{{\cal{D}}^b}\else{${{\cal{D}}^b}$}\fi}
\DeclareMathOperator{\coker}{coker}
\DeclareMathOperator{\Ind}{Ind}
\DeclareMathOperator{\Cor}{cor}
\DeclareMathOperator{\class}{class}
\DeclareMathOperator{\et}{\acute{e}t}
\DeclareMathOperator{\sing}{sing}
\DeclareMathOperator{\res}{res}
\DeclareMathOperator{\br}{Br}
\DeclareMathOperator{\h}{H}
\DeclareMathOperator{\cxan}{cx}
\newcommand{\z}{\mathbb{Z}}
\newcommand{\q}{\mathbb{Q}}
\newcommand{\calO}{{\mathcal O}}
\providecommand{\Pic}{\mathrm{Pic}}
\def\H{\ifmmode{{\mathrm H}}\else{${\mathrm H}$}\fi}
\newcommand*{\myproofname}{Proof of \Cref{thm:ShaTrivial}}
\theoremstyle{definition}
\DeclareFontFamily{U}{wncy}{}
\DeclareFontShape{U}{wncy}{m}{n}{<->wncyr10}{}
\DeclareSymbolFont{mcy}{U}{wncy}{m}{n}
\DeclareMathSymbol{\Sh}{\mathord}{mcy}{"58} 
\DeclareMathSymbol{\Be}{\mathord}{mcy}{"42} 
\title[Brauer groups of certain affine cubic surfaces]{Brauer groups of certain affine cubic surfaces}
\author[A. Alfaraj]{Abdulmuhsin Alfaraj}
\address{Abdulmuhsin Alfaraj\\
	Department of Mathematical Sciences \\
	University of Bath \\
	Claverton Down \\
	Bath \\
	BA2 7AY \\
	UK.}
\urladdr{}
\begin{document}

\begin{abstract}
	We study the Brauer groups of affine surfaces that are complements of singular hyperplane sections of smooth cubic surfaces over a field $k$ of characteristic $0$. We determine the Brauer group over the algebraic closure as a Galois module for all the possible singular hyperplane sections. For the case when the hyperplane section is geometrically the union of three lines, we give explicit examples where transcendental elements of order $2$ and $3$ exist over $\q$. We end with an application on the integral Brauer-Manin obstruction to the integral Hasse principle.
\end{abstract}
\maketitle
\tableofcontents
\setcounter{equation}{0}
\setcounter{thm}{0}

\section{Introduction}
The Brauer group of a variety is a geometric object rich with arithmetic information. It is a birational invariant for smooth proper varieties, hence its importance in geometry. In 1970, Manin \cite{Manin71} successfully used the Brauer group to explain failures of the Hasse principle at the time, which sparked arithmetic interest in the method giving birth to the study of Brauer-Manin obstructions on varieties. Naturally, to study applications of the Brauer group one would first attempt to compute it. Determining the Brauer group, and especially its transcendental part, is a difficult problem in general. If the variety is not proper, the problem becomes even more challenging: indeed, one expects the Brauer group to be larger than that of any smooth compactification.

A special class of affine varieties over number fields that attracted arithmetic interest is that of log K3 surfaces. Conjecturally, it is expected that the positivity of the log anticanonical class of an affine variety controls the abundance of integral points: the more positive it is, the more integral points one expects. Log K3 surfaces, having a trivial anticanonical class, are the borderline case, hence their significance. Moreover, as these surfaces are simply connected and can be embedded in smooth proper varieties where the boundary is geometrically a strict normal crossing divisor, computing their Brauer groups is expected to be generally more approachable.

The Brauer groups of different classes of log K3 surfaces over number fields have been studied for arithmetic purposes in the literature.
In \cite{CTOW}, Colliot-Thélène and Wittenberg studied the integral Brauer-Manin obstruction on a family of diagonal cubic surfaces, which are complements of smooth genus $1$ curves in smooth cubic surfaces. In \cite{BrightLyczak}, Bright and Lyczak gave uniform bounds for the size of the Brauer group of complements of a smooth anticanonical divisor in a del Pezzo surface of degree at most $7$. In both cases, the Brauer group is related to the arithmetic of the smooth genus $1$ anticanonical divisor. A notable example in the singular case is given by Markoff surfaces, where the hyperplane section is the union of three rational lines, was studied by Colliot-Thélène,  Wei, and Xu in  \cite{CTWX} and  by Loughran and Mitankin \cite{MarkoffSurf}. 

In this paper, we study surfaces that can be embedded as complements of a singular anticanonical divisor in a smooth cubic surface over a field of characteristic zero. These surfaces are arithmetically interesting as they are ample log K3 surfaces.  Moreover, they are also interesting from a geometric perspective, since they can admit infinite discrete automorphism groups. Such infinite automorphism groups have been studied and used for arithmetic applications recently,  e.g. \cite{KollarLi2024} and \cite{KollarVillalobos2024}. In the particular case of Markoff surfaces, these automorphism groups were combined with a Brauer--Manin obstruction in \cite[\S 5.2]{CTWX}.

For this class of affine cubic surfaces, we compute the Brauer group over the algebraic closure as a Galois module. In this situation, the Brauer group is related to the combinatorics of the irreducible (genus $0$) components of the boundary divisor and their intersection. We then use this to determine all the possibilities of the Brauer group over the ground field. More precisely, we prove the following. 

\begin{thrm}\label{thm:Main}
	Let $k$ be a field of characteristic $0$ and let $\Gamma_k:=\gal(\overline{k}/k)$.
	Let $X$ be a smooth cubic surface, $H$ be a hyperplane section, and $U:=X\setminus H$. For $d\in k$, let $M_d=(\Ind_{k(\sqrt{d})/k} \z) / \z$, where $\ind_{k(\sqrt{d})/k}$ is the induced module from $\Gamma_{k(\sqrt{d})}$ to $\Gamma_k$.
	\begin{enumerate}[label=(\roman*),itemsep=2pt,parsep=2pt,topsep=4pt]
		\item Suppose that $H$ is the union of a line $\ell$ and a smooth conic $C$ over $k$. Then 
		\[  \br \overline{U} \cong  \left\{
		\begin{array}{ll}
			0, & \text{if $\ell$ is tangent to C,} \\
			\q/\z(-1), & \text{if $|\ell(k)\cap C(k)|=2$} \\ 	\varinjlim \left( {M_d}/{nM_d}(-1) \right), & \text{if $\ell\cap C=\spec(k(\sqrt{d}))$, $d\notin k^2$}.
		\end{array} 
		\right. \]
		
		\item Suppose that $H$ is a geometrically irreducible singular cubic curve $C$. Then 
		\[  \br \overline{U} \cong  \left\{
		\begin{array}{ll}
			0, & \text{if $C$ is cuspidal,} \\
			\q/\z(-1), & \text{if $C$ is nodal split multiplicative}\\ 	
			\varinjlim \left( {M_d}/{nM_d}(-1) \right), & \text{if $C$ is nodal splitting over $ k(\sqrt{d})$, $d\notin k^2$}.
		\end{array} 
		\right. \]
		
		\item Suppose that $\overline{H}$ is the union of three line $\ell_1,\ell_2,\ell_3$. Let $L$ be the minimal extension over which each $\ell_i$ is defined, let $d\in k$ be the discriminant of $L/k$, and let $\tilde{L}:=L(\sqrt{d})$. Then
		\[ \br \overline{U} \cong  \left\{
		\begin{array}{ll}
			0, & \text{if the lines meet at an Eckard point,} \\
			\q/\z(-1), & \text{if $L=k$ and $\cap \ell_i =\emptyset$, or if $\gal(L/k)\cong\z/3\z$} \\
			\varinjlim \left( {M_d}/{nM_d}(-1) \right), & \text{if $\gal(\tilde{L}/k)\cong S_3$ or $\gal(L/k)\cong \z/2\z$}. \\
		\end{array} 
		\right. \]	
	\end{enumerate}
\end{thrm}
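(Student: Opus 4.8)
The plan is to express $\br\bar U$ through étale cohomology supported on the boundary $\bar H$, and then to reduce everything to the combinatorics of the dual complex of $\bar H$, where the case distinctions in the theorem become visible.

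\emph{Step 1: reduction to cohomology with supports.} Fix $n\ge 1$ and let $j\colon\bar U\hookrightarrow\bar X$ be the open immersion with closed complement $\bar H$. Since $\bar X$ is a geometrically rational cubic surface, $\pic\bar X$ is torsion-free, $\h^2(\bar X,\mu_n)=\pic(\bar X)/n$, and Poincaré duality gives $\h^1(\bar X,\mu_n)=\h^3(\bar X,\mu_n)=0$; moreover $\h^1(\bar U,\mu_n)=0$ since $\bar U$ is simply connected. Feeding this into the localization sequence
\[
\h^2_{\bar H}(\bar X,\mu_n)\to\h^2(\bar X,\mu_n)\xrightarrow{\,j^*\,}\h^2(\bar U,\mu_n)\to\h^3_{\bar H}(\bar X,\mu_n)\to\h^3(\bar X,\mu_n)=0,
\]
and comparing with the Kummer sequence of $\bar U$ — using that $\pic\bar X\to\pic\bar U$ is surjective and compatible with cycle classes, so that $\im j^*$ is precisely the image of $\pic(\bar U)/n$ in $\h^2(\bar U,\mu_n)$ — one obtains $\br(\bar U)[n]\cong\h^3_{\bar H}(\bar X,\mu_n)$ as $\Gamma_k$-modules, hence $\br\bar U\cong\varinjlim_n\h^3_{\bar H}(\bar X,\mu_n)$.

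\emph{Step 2: the dual complex.} Let $\Sigma\subset\bar H$ be the finite singular locus and $\bar H^\circ=\bar H\setminus\Sigma$, whose connected components are copies of $\PP^1$ with the preimages of $\Sigma$ deleted. Absolute purity gives $\h^q_\Sigma(\bar X,\mu_n)=\h^{q-4}(\Sigma,\mu_n^{\otimes-1})$ (nonzero only for $q=4$) and $\h^q_{\bar H^\circ}(\bar X\setminus\Sigma,\mu_n)=\h^{q-2}(\bar H^\circ,\z/n)$, and the excision sequence for $\Sigma\subset\bar H$ produces
\[
0\to\h^3_{\bar H}(\bar X,\mu_n)\to\h^1(\bar H^\circ,\z/n)\xrightarrow{\,\delta\,}\h^0(\Sigma,\mu_n^{\otimes-1}).
\]
A local computation at each point of $\Sigma$ (where $\bar H$ looks like a union of smooth branches) identifies $\delta$ with the map sending a tuple of residues on the punctured $\PP^1$'s to the sum, at each point of $\Sigma$, of the residues along the branches through it; together with the residue theorem on each component this identifies $\ker\delta$ with $\h_1(\Delta_{\bar H};\z)\otimes(\z/n)(-1)$, where $\Delta_{\bar H}$ is the dual complex of $\bar H$ (one vertex per component of the normalization, and a $d$-simplex for each point of $\bar H$ with $d+1$ local branches — so transverse double points and self-nodes give edges, an Eckardt triple point gives a filled triangle, and a unibranch singularity contributes nothing). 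By Galois equivariance of these constructions, $\Gamma_k$ acts on $\h_1(\Delta_{\bar H})$ through its permutation action on the components and singular points of $\bar H$. Taking the limit,
\[
\br\bar U\ \cong\ \h_1(\Delta_{\bar H};\z)\otimes(\q/\z)(-1)\qquad\text{as }\Gamma_k\text{-modules}.
\]
(Equivalently one may first blow up $\bar X$ along $\Sigma$ to make the boundary strict normal crossings without changing $\bar U$, and run the usual Gysin spectral sequence.)

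\emph{Step 3: classification and computation.} A singular hyperplane section of a smooth cubic surface is a reduced plane cubic of arithmetic genus $1$, hence one of: an irreducible cubic with a single node or cusp; a line together with a smooth conic (meeting transversely at two points, or tangentially at one); or three lines (a triangle, or concurrent at an Eckardt point). In the cuspidal, tangent, and Eckardt cases $\bar H^\circ$ is a disjoint union of affine lines, so $\h^1(\bar H^\circ,\z/n)=0$ and $\br\bar U=0$ ($\Delta_{\bar H}$ is contractible). In the remaining purely nodal cases $\Delta_{\bar H}$ is a connected graph with $b_1=p_a(\bar H)=1$, so $\h_1(\Delta_{\bar H})\cong\z$ as a group and $\Gamma_k$ acts on it through the sign of the induced permutation of its unique cycle. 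This sign is trivial — giving $\br\bar U\cong\q/\z(-1)$ — exactly when the two branches of the node are $k$-rational (split multiplicative), when $|\ell(k)\cap C(k)|=2$, when the three lines are $k$-rational and form a triangle, or when $\gal(L/k)\cong\z/3$ (a $3$-cycle is even); and it is the nontrivial quadratic character of $k(\sqrt d)/k$ — so $\h_1(\Delta_{\bar H})\cong M_d$ and $\br\bar U\cong\varinjlim M_d/nM_d(-1)$ — exactly when $\Gamma_k$ interchanges the two branches of the node ($C$ nodal and split over $k(\sqrt d)$), interchanges the two points of $\ell\cap C=\Spec k(\sqrt d)$, or acts on the three lines by an odd permutation ($\gal(\tilde L/k)\cong S_3$, or $\gal(L/k)\cong\z/2$). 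This accounts for all three parts of the theorem.

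\emph{The main obstacle} is Step 2: pinning down the boundary map $\delta$ — equivalently, showing $\h^3_{\bar H}(\bar X,\mu_n)$ is the first cohomology of the dual complex with the correct Tate twist and Galois action — and treating the non-normal-crossing points (cusp, tacnode, Eckardt point) carefully, whether by the local residue analysis or by a blow-up to strict normal crossings. Step 1 is a routine diagram chase, Step 3 rests on the classical classification of singular cubic sections, and what remains is the bookkeeping of signs and twists in the case analysis.
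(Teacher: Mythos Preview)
Your argument is correct and takes a genuinely different, more uniform route than the paper. The paper treats the three cases separately (Propositions~\ref{prop:case1}, \ref{prop:case2}, \ref{prop:case3}): in each it peels off pieces of $\bar H$ via iterated Gysin sequences, tracking the relevant vanishing by hand, and in the three-lines case it spends considerable effort (Lemma~\ref{lem:CommDiag} and the two steps of Proposition~\ref{prop:case3}) writing the residue map out explicitly as $(a,b,c)\mapsto(c-b,a-c,b-a)$ and then determining the Galois action on its kernel element by element for each possible $\gal(\tilde L/k)$. Your Steps~1--2 compress all of this into the single identification $\br(\bar U)[n]\cong\h_1(\Delta_{\bar H};\z)\otimes(\z/n)(-1)$, and your Step~3 replaces the paper's elementwise computation by the observation that $\Gamma_k$ acts on $\h_1$ of a cycle graph through the orientation character --- for the triangle, exactly the sign of the permutation of the lines. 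What the paper's approach buys is that your ``local computation'' pinning down $\delta$ is actually carried out in full (that is precisely the content of Lemma~\ref{lem:CommDiag}); what your approach buys is a case-free formulation that visibly generalises to other rational-curve boundaries, and a transparent explanation of why the discriminant quadratic character governs the answer. One minor remark: the claim $\h^1(\bar U,\mu_n)=0$ in Step~1 is not needed --- the identification $\br(\bar U)[n]\cong\h^3_{\bar H}(\bar X,\mu_n)$ already follows from $\br\bar X=0$ together with the surjectivity of $\pic\bar X\to\pic\bar U$ --- and simple-connectedness of $\bar U$ is not obvious in the non-SNC cases.
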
 
\vspace{4pt}
For case $(iii)$ of Theorem \ref{thm:Main}, we give examples where the transcendental part of $\Br(U)$ is non-trivial over $k=\q$.
\begin{thrm}\label{thm:DescentToQ}
	Let $M$ be any of the following groups
	$$\z/2\z, \>\> \z/3\z.$$
	Then there exists an affine cubic surface $U$ over $\q$ that is the complement of three geometric lines such that
	$$M\subset \Br(U) \text{ and } M\not\subset \Br_1(U).$$
\end{thrm}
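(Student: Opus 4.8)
The plan is, for each of the three groups $A$, to write down an explicit smooth cubic surface $X$ over $\q$ and a plane $H$ whose section $X\cap H$ is a triangle of lines with a prescribed Galois action, to compute $(\br \bar{U})^{\Gamma_\q}$ from \Cref{thm:Main}(iii), and — since $\Br(U)/\Br_1(U)$ always injects into $(\br\bar{U})^{\Gamma_\q}$ — to exhibit enough Azumaya algebras on $U$ to fill out this invariant group.

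First I would compute the invariants occurring in \Cref{thm:Main}(iii). When $\bar{H}$ is a triangle of lines we are automatically outside the Eckardt case and have $\cap\ell_i=\emptyset$, so $\br\bar{U}$ is $\q/\z(-1)$ or $\varinjlim_n\bigl(M_d/nM_d(-1)\bigr)$, where in the second case $\q(\sqrt{d})$ is the field of definition of the conjugate pair of lines. Since $M_d\cong\z$ with $\Gamma_\q$ acting through the quadratic character $\varepsilon_d$ of $\q(\sqrt{d})/\q$, the module $M_d/nM_d(-1)$ is $\z/n\z$ with $\Gamma_\q$ acting by $\varepsilon_d\chi_n^{-1}$, $\chi_n$ the mod-$n$ cyclotomic character. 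Computing the invariants prime power by prime power, and using that $\q(\sqrt{d})\subseteq\q(\zeta_n)$ only for $d\in\{-1,\pm 2,-3\}$ modulo squares, one finds
\[
(\br\bar{U})^{\Gamma_\q}\cong
\begin{cases}
\z/4\z, & d\equiv -1,\\
\z/2\z\times\z/3\z, & d\equiv -3,\\
\z/2\z, & \text{otherwise},
\end{cases}
\qquad\text{and}\qquad
(\q/\z(-1))^{\Gamma_\q}\cong\z/2\z.
\]
Hence the three groups in the statement are exactly the nonzero possibilities, and each will be obtained from the configuration ``$\ell_1$ defined over $\q$, and $\ell_2,\ell_3$ conjugate over $\q(\sqrt{d})$'' by taking $d$ equal to $2$, $-1$, and $-3$ respectively.

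For the geometric construction, fix $d$ and set $H=\{x_3=0\}$, $\ell_1=\{x_0=x_3=0\}$, and $\{\ell_2,\ell_3\}=\{x_3=0,\ x_1^2=d x_2^2\}$, which form a triangle with vertices $[1:0:0:0]$ and $[0:\pm\sqrt{d}:1:0]$. For any quaternary quadratic form $Q$ the cubic $F=x_0(x_1^2-d x_2^2)+x_3 Q$ has $\{F=0\}\cap H=\ell_1\cup\ell_2\cup\ell_3$, and a glance at the partial derivatives of $F$ shows that $\{F=0\}$ is automatically smooth along the three lines away from the vertices, and smooth at the vertices exactly when $Q$ does not vanish there. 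Taking $Q$ generic among such forms makes $X=\{F=0\}$ a smooth cubic surface, so $U=X\setminus H$ lies in the case $\gal(L/\q)\cong\z/2\z$ of \Cref{thm:Main}(iii) with the chosen $d$. Since $\ell_1$ is a $\q$-rational line on $X$ meeting $H$ only in non-rational points, $\ell_1(\q)\subseteq U(\q)$.

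It remains to realise all of $(\br\bar{U})^{\Gamma_\q}$ inside $\Br(U)$. In Hochschild--Serre terms, for $\HH^p(\q,\HH^q(\bar{U},\gm))\Rightarrow\HH^{p+q}(U,\gm)$ this is the vanishing of the transgression $d_2^{0,2}\colon(\br\bar{U})^{\Gamma_\q}\to\HH^2(\q,\Pic\bar{U})$: the classes $[\ell_1],[\ell_2],[\ell_3]$ span a rank-$3$ sublattice of $\Pic\bar{X}$ (their intersection matrix has determinant $4$), so $\bar{\q}[\bar{U}]^\times=\bar{\q}^\times$, hence $E_2^{3,0}=\HH^3(\q,\gm)=0$ and $d_2^{0,2}$ is the only differential that could obstruct. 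I would verify its vanishing by constructing the classes by hand: a quaternion algebra $(u,v)$ with $u,v$ invertible regular functions on $U$ for the $2$-torsion generator, and a cyclic algebra over $\q(\zeta_3)/\q$, resp. over $\q(i)/\q$, with a suitable function as second entry for the $\z/3\z$-, resp. $\z/4\z$-, summand; one then checks, by residue computations along the boundary components of a smooth compactification, that each algebra is unramified on $U$ and maps to a generator of the corresponding part of $(\br\bar{U})^{\Gamma_\q}$. The main obstacle is precisely this last step — producing the algebras and computing their residues — and it is where the situation genuinely differs from the Markoff surfaces, for which the analogous invariant class fails to lift. With the algebras in hand, combining the lower bound they give with the upper bound $\Br(U)/\Br_1(U)\hookrightarrow(\br\bar{U})^{\Gamma_\q}\cong A$ yields $\Br(U)/\Br_1(U)\cong A$.
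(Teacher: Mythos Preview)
Your computation of $(\br\bar U)^{\Gamma_\q}$ is correct and coincides with what the paper proves in \Cref{lem:FixedByGalAction}. The real divergence from the paper is in the descent step, and there your proposal has a genuine gap.

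You plan to lift the invariant classes to $\Br(U)$ by writing down a quaternion algebra $(u,v)$ with $u,v$ invertible regular functions on $U$. But \Cref{lem:UinvertibleGlobal} says $\calO(U)^*=k^*$, so any such $u,v$ are constants and $(u,v)\in\Br(\q)\subset\Br_1(U)$; this produces nothing in $\Br(U)/\Br_1(U)$. One must instead take $u,v\in k(X)^*$ with nontrivial divisors supported on the boundary and check unramifiedness by residue computations --- exactly the step you flag as ``the main obstacle'' and leave undone. (Incidentally, your remark that $\ell_1(\q)\subseteq U(\q)$ is also off: $\ell_1\subset H$, so $\ell_1\cap U=\emptyset$.)

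The paper sidesteps this difficulty entirely by a different geometric construction. Rather than a generic cubic containing the given triangle, it realises $X$ as the blow-up of $\mathbb P^2$ at six points lying \emph{on} the three lines (\Cref{prop:BrTrExample}); then $U$ is literally $\mathbb P^2$ minus three lines, hence the $2$-dimensional torus $(\Res_{E/\q}\gm)/\gm$ for the cubic \'etale algebra $E$ cutting out the vertices. Surjectivity of $\Br(U)\to(\br\bar U)^{\Gamma_\q}$ then follows from Demeio's theorem on Brauer groups of tori, with no explicit algebras needed over $\q$. Your generic $U$ is not a torus, so this shortcut is unavailable, and you are left with the hard explicit-algebra computation you did not carry out.
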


We end the paper with an application to the integral Brauer-Manin obstruction to the (integral) Hasse principle. 
Consider the $\z$-scheme 
	\begin{equation*}
		\mathcal{U}:\> 9 x^3+ y^3 = z^2 + 3 \quad \subset \mathbb{A}^3_{\z}
	\end{equation*}
Grechuk \cite{gr24} posted a Mathoverflow question asking whether an integral solution exists for the equation defining $\mathcal{U}$. Shatrov \cite{gr24}, using elementary arguments, has shown that there are indeed no integral solutions for the equation. We show that there is an integral Brauer Manin obstruction to the existence of integral points on the affine cubic surface $\mathcal{U}$ coming from an order $3$ element in $\Br(\mathcal{U}_{\q})$, where $\mathcal{U}_{\q}=\mathcal{U}\times \q$.
\begin{thm}\label{thm:IMBO}
	Let $\omega$ be a primitive cubic root of unity. Then $\Br(\mathcal{U}_{\q})/\Br(\q) \cong \z/3\z$ and is generated by the class of the degree $3$ cyclic algebra
	$$B:= \Cor_{\q(\mathcal{U}_{\q})(\omega)/\q(\mathcal{U}_{\q})} \left( -9, z+\sqrt{-3} \right)_{\omega} \text{ in } \Br(\q(\mathcal{U}_{\q}))[3].$$
	Moreover,  $$\mathcal{U}(\mathbb{A}_{\z})^B = \emptyset.$$
\end{thm}

\vspace{10pt}

\noindent\textbf{Notation.} Let $k$ be a field of characteristic zero and let $\overline{k}$ be an algebraic closure of $k$. Let $\Gamma_k:=\Gal(\overline{k}/k)$ be the absolute Galois group of $k$. For a $k$-variety $X$, we write $\overline{X}:=X\times_k \overline{k}$ and $X_L:=X\times_k L$ for any field extension $L/k$. We let $\Br(X)=\Ho_{\et}^2(X,\gm)$ denote the Brauer group of a scheme $X$.
If $X$ is a $k$-variety, we write $\Br_1(X)=\ker \left[\Br(X)\rightarrow \Br(\overline{X})\right]$ for the algebraic Brauer group, we write ${\Br_0(X)=\im \left[\Br(k)\rightarrow \Br(X)\right]}$ for the constant part, we call $\Br(X)/\Br_1(X)$ the transcendental Brauer group, and we let ${\Br_a(X):=\Br_1(X)/\Br_0(X)}$.

\vspace{10pt}

\noindent\textbf{Acknowledgements.} I would like to thank my supervisor Daniel Loughran for his constant support and guidance. I would also like to thank Olivier Wittenberg for useful discussions.

\section{Algebraic Brauer group}

Let $k$ is a field of characteristic $0$. In this section, we study the algebraic part of the Brauer group of the complement of a singular hyperplane section in a smooth cubic surface.

\begin{lem}\label{lem:HXposs}
	Let $X\subset \mathbb{P}^3$ be a smooth cubic surface over field $k$ of characteristic $0$ and let $H$ be a hyperplane section. If $H$ is singular, then it is one the following:
	\begin{enumerate}[label=(\roman*)]
		\item a smooth conic and a line over $k$;
		\item a geometrically integral singular cubic curve over $k$;
		\item three geometric lines intersecting at a single (Eckard) point;
		\item three geometric lines intersecting at three geometric points with a $\gal(k(C)/k)$-action where  $\gal(k(C)/k)$ is isomorphic to either $0$, $\z/2\z$, $\z/3\z$, or $S_3$;
	\end{enumerate}
\end{lem}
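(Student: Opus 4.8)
The plan is to deduce the statement from the classical classification of plane cubic curves over $\bar{k}$ together with a short Galois-descent analysis, the only substantial input being that the \emph{geometric} hyperplane section is reduced.

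Write $H = X \cap \Pi$ for a $k$-hyperplane $\Pi \subset \mathbb{P}^3$. Since $X$ has degree $3$, $H$ is a plane cubic curve inside $\Pi \cong \mathbb{P}^2_k$ (indeed an anticanonical curve on $X$, of arithmetic genus $1$). The first step is to show that $\bar{H} := H \times_k \bar{k}$ is reduced. If it were not, then for degree reasons $\bar{H}$ would contain a line $L$ with multiplicity at least $2$. Choosing homogeneous coordinates so that $\Pi = \{x_0 = 0\}$ and $L = \{x_0 = x_1 = 0\}$, the cubic form $F$ cutting out $\bar{X}$ can then be written $F = x_0 A + x_1^2 B$ with $A$ a quadratic form and $B = B(x_1,x_2,x_3)$ a linear form. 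Differentiating, one finds that along $L$ every partial of $F$ vanishes except possibly $\partial F/\partial x_0 = A(0,0,x_2,x_3)$; since the binary quadratic form $A(0,0,x_2,x_3)$ has a zero on $L$, the surface $\bar{X}$ would be singular at the corresponding point of $L \subset \bar{X}$, contradicting smoothness. Hence $\bar{H}$ is a reduced plane cubic. I expect this reducedness claim to be the main obstacle: it is the only place where smoothness of $X$ is genuinely used, and everything after it is bookkeeping.

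Next I would recall the classification of reduced plane cubics over an algebraically closed field: such a curve is either geometrically irreducible -- and then smooth, nodal, or cuspidal -- or a smooth conic together with a line meeting it transversally (two distinct points) or tangentially (one point, with multiplicity two), or a union of three distinct lines, which are either concurrent or form a triangle (three distinct pairwise intersections). Discarding the smooth case, excluded by the hypothesis that $H$ is singular, leaves exactly the six geometric shapes that underlie cases (i)--(iv).

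Finally I would run the descent, using that $\Pi$, $H$, and hence the set of irreducible components of $\bar{H}$ are $\Gamma_k$-stable. If $\bar{H}$ is geometrically irreducible it is geometrically integral and singular, which is case (ii). If $\bar{H}$ is a smooth conic union a line, these are the unique conic and line components, so each is $\Gamma_k$-stable and descends to $k$, and geometric smoothness of the conic descends, giving case (i) (the transversal-versus-tangent alternative being the one recorded in Theorem~\ref{thm:Main}(i)). If $\bar{H}$ is three concurrent lines, its unique point of multiplicity three is $\Gamma_k$-fixed, hence $k$-rational, and since the three lines lie on $X$ this is an Eckardt point of $X$: case (iii). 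If $\bar{H}$ is a triangle of lines, $\Gamma_k$ permutes the three lines through a subgroup $G \subseteq S_3$; up to isomorphism $G$ is $0$, $\z/2\z$, $\z/3\z$, or $S_3$, and $G$ is exactly the group $\gal(k(C)/k)$ of the statement (the permutation group attached to the degree-$3$ étale $k$-algebra of the three lines), giving case (iv).
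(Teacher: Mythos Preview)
The paper states this lemma without proof, presumably regarding it as standard background on plane cubics; there is therefore no argument to compare against. Your proof is correct and supplies exactly the details one would want. The reducedness step is the crux, and your computation is clean: writing $F = x_0 A + x_1^2 B$ and observing that $A(0,0,x_2,x_3)$ is a binary quadratic which must vanish somewhere on $L$ pins down a singular point of $\bar X$. After that, the classification of reduced plane cubics over $\bar k$ and the descent of components (the conic and line being distinguishable by degree, the Eckardt point being the unique triple point, and the triangle carrying a permutation action through a subgroup of $S_3$) are routine. One cosmetic remark: in case (iii) the Eckardt point is $k$-rational, but the three lines through it need not individually be defined over $k$; the paper's wording ``three geometric lines'' already accommodates this, and your argument does not claim otherwise.
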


\begin{rem}\label{rem:SNCposs}
	In Lemma \ref{lem:HXposs}, the cases where $H$ is a strict normal crossings divisor are $(iv)$ and $(i)$ when the conic intersects the lines at a degree two point.
\end{rem}

\begin{lem}[{{\cite[Lemma 2.1]{MarkoffSurf}}}]\label{lem:UinvertibleGlobal}
	Let $X$ be a smooth cubic surface over $k$, $H$ be a hyperplane section and  $U:=X\setminus H$. Then $\mathcal{O}(U)^*=k^*$.
\end{lem}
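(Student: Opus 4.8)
The plan is to read off $\mathcal{O}(U)^{*}$ from the excision sequence in the Picard group. Writing $D_{1},\dots,D_{r}$ for the prime components of $H$, there is an exact sequence
\[
0\to\mathcal{O}(X)^{*}\to\mathcal{O}(U)^{*}\xrightarrow{\,f\mapsto\divv f\,}\bigoplus_{i=1}^{r}\z D_{i}\to\Pic(X),
\]
in which the last map sends $(n_{i})_{i}$ to the class of $\sum_{i}n_{i}D_{i}$. Since $X$ is proper and geometrically integral, $\mathcal{O}(X)^{*}=k^{*}$, so the lemma is equivalent to the injectivity of $\bigoplus_{i}\z D_{i}\to\Pic(X)$, i.e.\ to the $\z$-linear independence of the classes $[D_{i}]$. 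Base changing to $\bar{k}$ only enlarges $\mathcal{O}(U)^{*}/k^{*}$, and $k^{*}=\mathcal{O}(U)^{*}\cap\bar{k}^{*}$ since $X$ is geometrically integral, so I may assume $k=\bar{k}$; then by \Cref{lem:HXposs} either $H$ is geometrically integral, or $H$ is the union of a line and a smooth conic, or $H$ is the union of three coplanar lines.

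The second step is a short numerical computation. Assume $\sum_{i}n_{i}[D_{i}]=0$ in $\Pic(X)$; pairing this class with each $D_{j}$ gives $\sum_{i}n_{i}(D_{i}\cdot D_{j})=0$, and it is enough to show the Gram matrix $(D_{i}\cdot D_{j})_{i,j}$ is invertible. The diagonal entries come from adjunction, $D_{i}^{2}=-1$ for a line and $D_{i}^{2}=0$ for the smooth conic, and the remaining entries are then forced by $\sum_{i}D_{i}=-K_{X}$ and $(-K_{X})^{2}=3$: a line and a conic give $\ell\cdot C=2$ and Gram matrix $\left(\begin{smallmatrix}-1&2\\2&0\end{smallmatrix}\right)$, of determinant $-4$; three coplanar lines give $D_{i}\cdot D_{j}=1$ for $i\neq j$ and Gram matrix $\left(\begin{smallmatrix}-1&1&1\\1&-1&1\\1&1&-1\end{smallmatrix}\right)$, of determinant $4$; and a geometrically integral $H$ is a single component of class $-K_{X}\neq 0$. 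In each case the determinant is nonzero, hence $n_{i}=0$ for all $i$, and the lemma follows.

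I do not expect a serious obstacle here: the inputs are only the classification of singular hyperplane sections (\Cref{lem:HXposs}), the adjunction formula, and $\mathcal{O}(X)^{*}=k^{*}$ for proper geometrically integral $X$. The point worth keeping in mind is that it suffices to rule out \emph{numerical} triviality of $\sum_{i}n_{i}D_{i}$, so nothing about the finer structure of $\Pic(X)$---nor about whether $H$ is a strict normal crossings divisor---enters, and this is also why the same computation handles every case of \Cref{lem:HXposs} uniformly.
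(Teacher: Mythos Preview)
Your proof is correct. The paper does not give its own argument for this lemma but simply cites \cite[Lemma 2.1]{MarkoffSurf}; your approach via the excision sequence and the nondegeneracy of the intersection form on the components of $H$ is the standard one, and is essentially what appears in the cited reference. One minor remark: your appeal to \Cref{lem:HXposs} covers only singular $H$, but since a smooth hyperplane section is geometrically integral your trichotomy over $\bar{k}$ is still complete, so this is a cosmetic point rather than a gap.
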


\begin{lem}\label{lem:BrUisoH1}
	Let $X$ be a smooth cubic surface over $k$, $H$ be a hyperplane section and $U:=X\setminus H$. Suppose that $H$ is a strict normal crossings divisor.  If $k$ is a number field or if $U(k)\neq \emptyset$, then $\pic(\overline{U})$ is torsion-free and 
	$$\Br_a(U)\cong \Ho^1(k, \pic(\overline{U})).$$
	Moreover, $\Br_a(U)$ is finite and has only finitely many possibilities.
\end{lem}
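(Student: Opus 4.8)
The plan is to read off $\Br_1(U)$ from the Hochschild--Serre spectral sequence in \'etale cohomology,
\[ E_2^{p,q}=\Ho^p\bigl(k,\Ho^q(\bar U,\gm)\bigr)\Longrightarrow\Ho^{p+q}(U,\gm). \]
Applying \Cref{lem:UinvertibleGlobal} over $\bar k$ gives $\Ho^0(\bar U,\gm)=\calO(\bar U)^*=\bar k^*$, and $\Ho^1(\bar U,\gm)=\pic(\bar U)$; since $\Ho^1(k,\bar k^*)=0$ by Hilbert~90, the low-degree exact sequence takes the form
\[ 0\to\pic(U)\to\pic(\bar U)^{\Gamma_k}\xrightarrow{\,d_2\,}\Br(k)\to\Br_1(U)\to\Ho^1(k,\pic(\bar U))\xrightarrow{\,d_2\,}\Ho^3(k,\bar k^*). \]
So I need to control the two ends. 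If $U(k)\neq\emptyset$, a rational point gives a section of $U\to\Spec k$ and hence splittings $\Ho^n(k,-)\hookrightarrow\Ho^n(U,-)$, which force both maps $d_2$ above to vanish; thus $\Br(k)\hookrightarrow\Br_1(U)$ and $\Br_1(U)/\Br(k)\cong\Ho^1(k,\pic(\bar U))$. If $k$ is a number field, then $\Ho^3(k,\bar k^*)=0$ (a standard fact), so the second $d_2$ vanishes and $\Br_1(U)/\Br_0(U)\cong\Ho^1(k,\pic(\bar U))$, where $\Br_0(U):=\im(\Br(k)\to\Br(U))$; under the usual convention of writing $\Br(k)$ for $\Br_0(U)$ (they agree whenever $U(k)\neq\emptyset$) this is again the asserted isomorphism.

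Next I would identify $\pic(\bar U)$ as an abelian group. Excision on the smooth surface $\bar X$ along the boundary $\bar H=\bar D_1\cup\dots\cup\bar D_r$, where the $\bar D_i$ are the geometric irreducible components, yields a surjection $\pic(\bar X)\twoheadrightarrow\pic(\bar U)$ with kernel generated by the classes $[\bar D_i]$; since $\pic(\bar X)\cong\z^7$ this already gives that $\pic(\bar U)$ is finitely generated. Torsion-freeness is precisely where the hypothesis that $\bar H$ is a strict normal crossings divisor is used: by \Cref{lem:HXposs} and \Cref{rem:SNCposs} the only SNC cases are $(i)$, a line and a conic meeting in two points, and $(iv)$, a triangle of three lines. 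In each subcase I would express the boundary classes in the standard basis $\ell,e_1,\dots,e_6$ of $\pic(\bar X)$\,---\,in case $(i)$ a line class $[\bar\ell]$ together with $[\bar C]=-K_{\bar X}-[\bar\ell]$, and in case $(iv)$ three line classes summing to $-K_{\bar X}$\,---\,and then check by a short Smith-normal-form computation that they span a primitive sublattice. Hence $\pic(\bar U)$ is free, of rank $5$ in case $(i)$ and of rank $4$ in case $(iv)$.

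For the remaining assertions, observe that the $\Gamma_k$-action on $\pic(\bar X)$ preserves the intersection pairing and the canonical class, so it factors through a finite group $\Gamma\leq W(E_6)$ (in fact $\Gamma=\Gal(L/k)$ for $L$ the splitting field of the $27$ lines), and since $\Gamma_k$ permutes the components $\bar D_i$ this descends to an action on $\pic(\bar U)$. Therefore $\Ho^1(k,\pic(\bar U))=\Ho^1(\Gamma,\pic(\bar U))$ is finite, being the cohomology of a finite group with finitely generated coefficients. Moreover, as $(X,H,U,k)$ range over all configurations covered by the lemma, the $\Gamma$-module $\pic(\bar U)$ realises only finitely many isomorphism types\,---\,there are finitely many SNC boundary configurations and finitely many conjugacy classes of subgroups of the fixed finite group $W(E_6)$\,---\,so $\Br_1(U)/\Br(k)$ has only finitely many possibilities.

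The step I expect to be the genuine obstacle is torsion-freeness of $\pic(\bar U)$: the rest of the argument is essentially formal, whereas this one really uses the SNC classification together with the case-by-case lattice bookkeeping. A secondary point to watch is the right-hand end of the low-degree sequence, where the number-field hypothesis and the rational-point hypothesis enter in different ways in order to suppress $\Ho^3(k,\bar k^*)$ and to obtain the isomorphism $\Br_1(U)/\Br(k)\cong\Ho^1(k,\pic(\bar U))$ with no correction term.
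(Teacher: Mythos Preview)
Your proposal is correct and follows essentially the same route as the paper. The paper cites \cite[Lemma~6.3(iv)]{Sansuc} for the isomorphism $\Br_1(U)/\Br(k)\cong\Ho^1(k,\pic(\bar U))$, which is exactly the Hochschild--Serre argument you spell out; for torsion-freeness it does the line-plus-conic case by the same explicit primitivity check in the basis $\ell,e_1,\dots,e_6$ (your Smith-normal-form step) and cites \cite[Prop.~2.2]{CTWX} for the triangle case rather than redoing it; and the finiteness argument via the $W(E_6)$-action is identical. Your remark distinguishing $\Br_0(U)$ from $\Br(k)$ in the number-field case is in fact more careful than the paper's phrasing.
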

\begin{proof}
	We first show that $\pic(\overline{U})$ is torsion free. 
	By Remark \ref{rem:SNCposs} and Lemma \ref{lem:HXposs}, it suffices to consider the cases when the hyperplane section is geometrically the union of three lines, or the union of a line and a smooth conic intersecting at a degree $2$ point. The first case follows by \cite[Prop 2.2]{CTWX}. Now assume that $X\cap H$ is the union a line $L$ and a smooth conic $C$ both defined over $k$. Then as $\mathcal{O}(U)^*=k^*$, we have an exact sequence 
	$$ 0\rightarrow \z [L] \oplus \z [C] \rightarrow \pic(\overline{X}) \rightarrow \pic (\overline{U})\rightarrow 0.$$
	By \cite[Proposition 4.8]{Har} we have
	$$[C]=3\ell - \sum_{i=1}^6 e_i -[L]$$
	and by \cite[Theorem 4.9]{Har} the class of $L$ is contained in
	$$\{\ell,e_1 ,\ldots ,e_6\} \cup \{\ell - e_i -e_j \> :\>  1 \leq i < j \leq 6 \} \cup \{ 2\ell -\sum_{i \neq j} e_i \> :\> 1\leq j \leq 6 \}$$
	Thus, for all the possibilities of $[L]$ we deduce that $[C]=c_0\ell +\sum_{i=1}^6 c_i e_i $ where $c_0\in \{1,2\}$ and $c_m=-1$ for some $m\in \{1,\dots,6\}$ (i.e. the coefficient of $e_m$ in the vector corresponding to $[L]$ is $0$). To show that $\pic(\overline{U})$ is torsion free, we need to show that if $[D]\in \pic(\overline{X})$ satisfies $n[D] \in \z[L]+\z[C]$ for some $n>0$, then $[D] \in \z[L]+\z[C]$. Let $[D]=d_0\ell +\sum_{i=1}^6 d_i e_i$ and assume $n[D]=a[L]+b[C]$ for some $a,b\in \z$. Then we deduce that $nd_m=-b$ and $nd_0= \alpha a + c_0 b$ for $\alpha \in \{1,2\}$, where $\alpha$ is the coefficient of $\ell$ in the vector corresponding to $[L]$. It suffices to show that $n\mid a$. Now, we have $n\mid b$ and $n\mid \alpha a+c_0 b$ so that $n\mid \alpha a$. If $\alpha\neq 2$, then $n\mid \alpha$ and we are done. If $\alpha=2$, then $[L]\in \{ 2\ell -\sum_{i \neq j} e_i \> :\> 1\leq j \leq 6 \}$ so that $n d_j= -a$ for some $j\neq m$; thus, $n\mid a$ as desired. This shows that $\pic(\overline{U})$ is torsion free.
	
	Now, by \cite[Lemma 6.3.iv]{Sansuc} and Lemma \ref{lem:UinvertibleGlobal} we have 
	$$\Br_1(U)/\Br_0(U)\cong \Ho^1(k, \pic(\overline{U})).$$
	If $K$ is the minimal extension over which $\pic(\overline{X})$ splits, then 
	$\Ho^1(K, \pic(\overline{U}))=0$ since $\pic(\overline{X})$ is a torsion-free  $\Gamma_K$-module. By the inflation-restriction sequence, we obtain
	$$\Ho^1(k, \pic(\overline{U}))\cong \Ho^1(K/k, \pic({U}_K)).$$
	Since $K$ was chosen to be minimal, $\Gal(K/k)$ acts on the lattice $\pic(\overline{U})$ via some subgroup $G$ (well defined up to conjugacy) of an appropriate Weyl group. The result follows by running through the finitely many subgroups $G$ and computing $\Ho^1(G,\pic(U_K))$, which is finite.
\end{proof}

When the hyperplane section is geometrically the union of three lines, we have the following description of $\pic(\overline{U})$ as a $\Gamma_k$-module.
\begin{prop}[{{\cite[Prop 2.2]{CTWX}}}]\label{prop:picU-3lines}
	Let $X\subset \mathbb{P}^3_k$ be a smooth projective cubic surface over a field $k$ of characteristic zero. Suppose a plane $\mathbb{P}^2_k \subset \mathbb{P}^3_k$ cuts on $\overline{X}$ three lines $\ell_1,\ell_2,\ell_3$ over $\overline{k}$. Let $U\subset X$ be the complement of this plane. Then the natural map $\overline{k}^{\times} \rightarrow \overline{k}[U]^\times$ is an isomorphism of Galois modules and the natural map
	$$ 0\rightarrow \oplus_{i=1}^3 \z [\ell_i] \rightarrow \pic(\overline{X}) \rightarrow \pic (\overline{U})\rightarrow 0$$
	is an exact sequence of Galois lattices.
\end{prop}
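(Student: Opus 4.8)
The plan is to split the proposition into its two assertions and reduce both to \Cref{lem:UinvertibleGlobal} together with a single explicit computation in $\pic(\bar k)$-lattices. First, for the \emph{units}: applying \Cref{lem:UinvertibleGlobal} over the algebraically closed field $\bar k$ (the base changes $\bar X \supset \bar H = \ell_1 \cup \ell_2 \cup \ell_3$ are again a smooth cubic surface and a hyperplane section) gives $\mathcal O(\bar U)^{\times} = \bar k^{\times}$; since the inclusion $\bar k^{\times} \hookrightarrow \bar k[U]^{\times}$ is tautologically $\Gamma_k$-equivariant, it is an isomorphism of Galois modules.

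Next I would produce the \emph{exact sequence}. As $\bar X$ is smooth and projective, $\pic(\bar X) = \Cl(\bar X)$, and likewise $\pic(\bar U) = \Cl(\bar U)$; excising the three prime divisors $\ell_1, \ell_2, \ell_3$ yields the standard exact sequence of class groups
$$ \mathcal O(\bar U)^{\times} \xrightarrow{\ \divisor\ } \oplus_{i=1}^{3} \z[\ell_i] \longrightarrow \pic(\bar X) \longrightarrow \pic(\bar U) \longrightarrow 0, $$
where the middle arrow sends the $i$-th generator to the class of $\ell_i$. The surjectivity of $\pic(\bar X) \to \pic(\bar U)$ is immediate because divisors on $\bar U$ extend to $\bar X$, and $\oplus_{i=1}^{3}\z[\ell_i] \to \pic(\bar X)$ is injective because, by the previous paragraph, $\divisor$ is the zero map (every unit on $\bar U$ is constant and so has trivial divisor on $\bar X$). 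This gives the short exact sequence. All the maps are canonical, hence $\Gamma_k$-equivariant, with $\Gamma_k$ permuting the summands of $\oplus_{i=1}^3\z[\ell_i]$ via its action on $\{\ell_1, \ell_2, \ell_3\}$.

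It then remains to see that $\pic(\bar U)$ is \emph{torsion-free}, equivalently that the image of $\oplus_{i=1}^3 \z[\ell_i]$ is a saturated sublattice of the rank-$7$ lattice $\pic(\bar X)$. Since $W(E_6)$ acts transitively on the $45$ tritangent planes of a cubic surface, it suffices to verify this for one triple of coplanar lines. In the blow-up presentation $\pic(\bar X) = \z\ell \oplus \bigoplus_{j=1}^{6} \z e_j$ of \cite[Prop.~4.8]{Har} one may take $\{\, e_1,\ \ell - e_1 - e_2,\ 2\ell - e_1 - e_3 - e_4 - e_5 - e_6\,\}$, whose classes sum to the hyperplane class $3\ell - \sum_{j} e_j$; inspecting coordinates shows that an integral combination of these three classes is divisible by $n$ in $\pic(\bar X)$ only if each coefficient is, so the sublattice is saturated and $\pic(\bar U) \cong \z^{4}$.

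Given \Cref{lem:UinvertibleGlobal}, the first two steps are formal, so the only real content is the saturation check. I expect the main (minor) obstacle to be precisely making that check painless: the transitivity of $W(E_6)$ on tritangent planes is what reduces it to one explicit triple of line classes, rather than forcing a run over the various Galois actions on the triangle of lines — in contrast with the line-plus-conic case treated in \Cref{lem:BrUisoH1}, where one is obliged to go through the possibilities for the class of the line.
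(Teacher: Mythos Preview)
Your argument is correct. The paper does not supply its own proof of this proposition: it is quoted verbatim from \cite[Prop.~2.2]{CTWX}, and in the proof of \Cref{lem:BrUisoH1} the torsion-freeness of $\pic(\bar U)$ in the three-lines case is again simply attributed to that reference. So there is nothing in the paper to compare your proof against beyond noting that you have filled in what the paper outsources.

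Your use of the transitivity of $W(E_6)$ on tritangent planes is a genuinely nice touch: it reduces the saturation check to the single explicit triple $\{e_1,\ \ell-e_1-e_2,\ 2\ell-e_3-e_4-e_5-e_6-e_1\}$, where reading off the $e_2$- and $e_3$-coordinates immediately gives divisibility of two of the coefficients and the $e_1$-coordinate then gives the third. This is cleaner than the case-by-case analysis the paper carries out in \Cref{lem:BrUisoH1} for the line-plus-conic configuration, where no such transitivity is available and one must run through the three possible shapes of the line class. Your closing remark about this contrast is apt.
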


We now compute the possibilities for algebraic Brauer group when the hyperplane section is geometrically the union of three lines.
\begin{prop}\label{prop:AlgBr3lines}
	Let $X\subset \mathbb{P}^3_k$ be a smooth projective cubic surface over over a field $k$ of characteristic zero. Suppose a plane $\mathbb{P}^2_k \subset \mathbb{P}^3_k$ cuts on $\overline{X}$ three lines $\ell_1,\ell_2,\ell_3$ over $\overline{k}$ and let $U\subset X$ be the complement of this plane. Assume that either $U(k)\neq \emptyset$ or that $k$ is a number field.
	\begin{enumerate}
		\item If $\{\ell_1,\ell_2,\ell_3\}$ is a single $\Gamma_k$-orbit, then we have the following possibilities for $\br_a(U)$ with corresponding $\br_a(X)$:
		\begin{table}[H]
			\begin{tabular}{|l|l|l|l|l|l|l|l|l|l|}
				\hline 
				${\Br_a(U)}$ &  $0$ & $\z/2$ & $\z/2$ & $\z/2 \times \z/2$ & $\z/2\times \z/2$ & $\z/2\times \z/2 $ &  $\z/4$ & $\z/3$ & $\z/3\times \z/3$ \\ \hline
				${\Br_a(X)}$    & $0$ & $0$ &$ \z/2 $& $0$ & $\z/2$  & $\z/2\times \z/2$ & $\z/2$ & $\z/3$ & $\z/3\times \z/3$ \\ \hline
			\end{tabular}
		\end{table}
		
		\item If $\{\ell_1,\ell_2,\ell_3\}$ consists of two $\Gamma_k$-orbits, then we have the following possibilities for $\br_a(U)$ with corresponding $\br_a(X)$:
		\begin{table}[H]		
			\begin{tabular}{|l|l|l|l|l|l|l|l|}
				\hline 
				${\Br_a(U)}$ & $\z/2$ & $\z/2 \times \z/2$ & $\z/2 \times \z/2$ & $ (\z/2)^3$ & $ (\z/2)^3$ & $\z/4$ & $\z/2\times \z/4 $ \\ \hline 
				${\Br_a(X)}$    & $0$ &$ 0 $& $\z/2$ & $\z/2$  & $\z/2\times \z/2$ & $ \z/2$ & $\z/2 \times \z/2$  \\ \hline
			\end{tabular}
		\end{table}
		
		\item If $\{\ell_1,\ell_2,\ell_3\}$ is three $\Gamma_k$-orbits, i.e. each $\ell_i$ is defined over $k$, then we have the following possibilities for $\br_a(U)$ with corresponding $\br_a(X)$:
		\begin{table}[H]
			\begin{tabular}{|l|l|l|l|l|l|l|l|l|l|l|}
				\hline 
				${\Br_a(U)}$ &  $0$ & $\z/2$ & $\z/2$ & $(\z/2)^2$ & $(\z/2)^2$ & $(\z/2)^3 $ &  $(\z/2)^3$ & $(\z/2)^4$ & $\z/4$ & $\z/2\times \z/4$ \\ \hline
				${\Br_a(X)}$    & $0$ & $0$ &$ \z/2 $& $0$ & $\z/2$  & $0$ & $\z/2$ & $(\z/2)^2$ & $\z/2$ & $\z/2$ \\ \hline
			\end{tabular}
		\end{table}  
	\end{enumerate}
\end{prop}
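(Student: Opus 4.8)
The plan is to use the exact sequence of Galois lattices from Proposition \ref{prop:picU-3lines}, namely $0\to \bigoplus_{i=1}^3 \z[\ell_i] \to \pic(\bar X)\to \pic(\bar U)\to 0$, together with the isomorphism $\Br_1(U)/\Br(k)\cong \Ho^1(k,\pic(\bar U))$ from Lemma \ref{lem:BrUisoH1} (the three-lines case is always a strict normal crossings divisor by Remark \ref{rem:SNCposs}). Since the proof of Lemma \ref{lem:BrUisoH1} shows that $\Ho^1(k,\pic(\bar U))$ factors through the finite group $G=\Gal(K/k)$ acting on the lattice $\pic(\bar U_K)$ via a subgroup of the Weyl group $W(E_6)$, the computation reduces to a finite group-cohomology calculation. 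First I would take the long exact sequence in $G$-cohomology associated to the short exact sequence above. Because $\pic(\bar X)$ is a permutation-ish lattice on which $W(E_6)$ acts with $\Ho^1(G,\pic(\bar X))$ controlling $\Br(X)$ (and $\Ho^0$ giving the classes of the $27$ lines), and because $P:=\bigoplus_i \z[\ell_i]$ is either $\z[\ell_i]$ with trivial action (case (3)), an induced module $\Ind_{H}^{G}\z$ for an index-$2$ subgroup plus a trivial summand (case (2)), or a transitive permutation module on three letters (case (1)), one gets
\[
\Ho^1(G,\pic(\bar X)) \to \Ho^1(G,\pic(\bar U_K)) \to \Ho^2(G,P) \to \Ho^2(G,\pic(\bar X)),
\]
and the cokernel of the first map injects into $\Ho^2(G,P)$.

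Next I would enumerate the relevant subgroups $G\subseteq W(E_6)$. The line configuration forces constraints: the three coplanar lines $\ell_1,\ell_2,\ell_3$ form a "tritangent plane" triple, and the stabiliser of such a triple together with the possible orbit structures (one orbit: $G$ surjects onto a transitive subgroup of $S_3$, i.e. $\z/3$ or $S_3$ on the triple; two orbits: image is $\z/2$; three orbits: image trivial) cuts the list of conjugacy classes of subgroups of $W(E_6)$ down to a manageable finite set. For each such $G$, I would compute $\Ho^1$ and $\Ho^2$ of $P$ directly: for a trivial $\z$ summand $\Ho^1=0$, $\Ho^2=\Hom(G,\q/\z)$; for $\Ind_H^G\z$ use Shapiro, $\Ho^i(G,\Ind_H^G\z)=\Ho^i(H,\z)$, so $\Ho^1=0$ and $\Ho^2=\Hom(H,\q/\z)$; for the transitive three-letter permutation module, compute via the augmentation sequence $0\to I\to \z[G/G_0]\to\z\to 0$. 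Then I would read off $\Ho^1(G,\pic(\bar U_K))$ from the four-term sequence above, feeding in the known values of $\Ho^1(G,\pic(\bar X))=\Br(X)[\text{up to }\Br(k)]$ and $\Ho^2(G,\pic(\bar X))$; the latter vanishes for many $G$ because $\pic(\bar X)$, while not itself coflasque, has controlled $\Ho^2$, and where it does not vanish one tracks the connecting map explicitly on generators. Assembling the outputs over all admissible $G$ yields exactly the two columns in each of the three tables, paired up as claimed.

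The main obstacle I expect is twofold. First, correctly identifying \emph{which} subgroups of $W(E_6)$ actually arise as $\Gal(K/k)$-images for a tritangent-plane triple with each prescribed orbit type — one must avoid both over- and under-counting, and in particular check that each abstract possibility is geometrically realised (or at least that the cohomology it would produce is the same as some realised one), which is really a combinatorial fact about the $E_6$ root system and its $36$ tritangent planes. Second, pinning down the connecting homomorphism $\Ho^1(G,\pic(\bar U_K))\to \Ho^2(G,P)$ precisely enough to distinguish, e.g., $\z/2\times\z/2$ from $\z/4$ in the relevant rows: this requires knowing not just the orders but the extension class, which means computing the cohomology with honest cocycles (or using a computer algebra check) rather than Euler-characteristic bookkeeping. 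Once those two points are handled, the rest is the finite bookkeeping of matching $\Br_1(U)$ with the corresponding $\Br(X)$ entry for each $G$.
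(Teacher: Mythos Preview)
Your high-level strategy matches the paper's: reduce $\Br_1(U)/\Br(k)$ to $\Ho^1(G,\pic(\bar U))$ via Lemma~\ref{lem:BrUisoH1} and Proposition~\ref{prop:picU-3lines}, then enumerate the relevant subgroups $G\subseteq W(E_6)$ compatible with the prescribed orbit type on the tritangent triple and compute the cohomology for each. The difference is purely in execution. The paper does not attempt your four-term long exact sequence analysis or try to pin down the connecting map by hand; instead it uses the presentation $0\to\mathrm{PDiv}\,\bar X\to\mathrm{Lines}\,\bar X\to\pic\bar X\to 0$ together with Proposition~\ref{prop:picU-3lines} to write $\pic\bar U$ explicitly as a quotient of a permutation lattice, and then simply feeds this into Magma (following \cite[Prop~2.5]{MarkoffSurf}, which already handles case~(3)) to tabulate $\Ho^1(G,\pic\bar U)$ alongside $\Ho^1(G,\pic\bar X)$ for every admissible $G$. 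So the two obstacles you flag---deciding which subgroups actually occur and distinguishing $\z/4$ from $(\z/2)^2$ via the extension class---are not overcome by a theoretical argument in the paper either; they are absorbed into the machine computation. Your route would give more conceptual insight into \emph{why} each entry appears, but at the cost of the delicate connecting-map analysis you anticipate; the paper's route is opaque but complete.
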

\begin{proof}
	First, we note that (3) was computed in \cite[Prop 2.5]{MarkoffSurf}. We follow the same approach to compute the other cases.
	By Lemma \ref{lem:BrUisoH1}, we have that ${\br_1(U)/\br_0(U)\cong \h(k,\pic(\overline{U}))}$. We have an exact sequence 
	$$0\rightarrow \text{PDiv} \overline{X} \rightarrow \text{Lines} \overline{X} \rightarrow \pic \overline{X} \rightarrow 0$$ where $\text{Lines} \overline{X}$ is the free abelian group generated by the lines on $\overline{X}$ and $\text{PDiv} \overline{X}$ is the subgroup of principal divisors.
	This along with Proposition \ref{prop:picU-3lines} gives a description of $\pic \overline{U}$ as the quotient of a $\Gamma_k$-permutation module by a $\Gamma_k$-submodule. We proceed as in \cite[Prop 2.5]{MarkoffSurf} to compute $\h(k,\pic(\overline{U}))$ using Magma.
\end{proof}

\section{The Brauer group over $\overline{k}$}\label{sec:TransBr}
Let $X$ be a smooth cubic surface over a field $k$ of characteristic $0$, let $H$ be a hyperplane section, and set $U:=X\setminus H$. In this section we compute the possibilities of $\Br(\overline{U})$ as a $\Gamma_k$-module for the different cases in Lemma \ref{lem:HXposs}. We use this to deduce the possibilities for the transcendental Brauer Group of $U$.

\subsection{Preliminaries}
Let $Z$ be a closed subscheme of $X$, and let $U:=X\setminus Z$. For a sheaf $\mathcal{F}$ on $X_{\et}$, let
$$\Gamma_Z(X,\mathcal{F}):=\ker(\Gamma(X,\mathcal{F})\rightarrow \Gamma(U,\mathcal{F})),$$
the group of sections of $\mathcal{F}$ supported on $Z$. Let $\H^r_{Z}(X,\mathcal{F})$, the cohomology of $\mathcal{F}$ with support on $Z$, 
be the $r$th derived functor of the functor $\mathcal{G}\mapsto \Gamma_Z(X,\mathcal{G})$.

\begin{thrm}[{{\cite[Theorem 9.4]{MilneEC}}}]\label{thm:exactAxiom}
	For a sheaf $\mathcal{F}$ on $X_{\et}$ and  closed $Z\subset X$, there is a long exact sequence
	$$\cdots \rightarrow \Ho^r_Z(X,\mathcal{F}) \rightarrow \Ho^r(X,\mathcal{F}) \rightarrow \Ho^r(U,\mathcal{F}) \rightarrow \Ho^{r+1}_Z(X,\mathcal{F}) \rightarrow \cdots.$$
	This sequence is functorial in the pairs $(X,X\setminus Z)$ and $\mathcal{F}$.
\end{thrm}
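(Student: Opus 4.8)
The plan is to produce the sequence as the long exact cohomology sequence of a short exact sequence of complexes of abelian groups, obtained by applying the various global-section functors to a single injective resolution of $\mathcal{F}$. Concretely, first I would fix an injective resolution $\mathcal{F}\to\mathcal{I}^\bullet$ in the category of abelian sheaves on $X_{\et}$, write $j\colon U\hookrightarrow X$ for the open immersion and $i\colon Z\hookrightarrow X$ for the closed complement, and aim to show that
$$0\to \Gamma_Z(X,\mathcal{I}^\bullet)\to \Gamma(X,\mathcal{I}^\bullet)\to \Gamma(U,\mathcal{I}^\bullet)\to 0$$
is exact in each degree. Granting this, the long exact sequence of Theorem \ref{thm:exactAxiom} is immediate from the homological algebra of short exact sequences of complexes, with connecting maps $\Ho^r(U,\mathcal{F})\to \Ho^{r+1}_Z(X,\mathcal{F})$.

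The crux is the degreewise exactness, i.e.\ the surjectivity of $\Gamma(X,\mathcal{I})\to\Gamma(U,\mathcal{I})$ for any injective sheaf $\mathcal{I}$ on $X_{\et}$ (left exactness and the identification of the kernel with $\Gamma_Z(X,\mathcal{I})$ being the definition of $\Gamma_Z$). Here I would use the extension-by-zero functor $j_!$: it is exact and left adjoint to $j^*$, so one has natural identifications $\Gamma(U,\mathcal{I})=\Hom_{U_{\et}}(\z_U,j^*\mathcal{I})=\Hom_{X_{\et}}(j_!\z_U,\mathcal{I})$ and $\Gamma(X,\mathcal{I})=\Hom_{X_{\et}}(\z_X,\mathcal{I})$, and the restriction map is induced by the canonical monomorphism of sheaves $j_!\z_U\hookrightarrow \z_X$. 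Applying the exact functor $\Hom_{X_{\et}}(-,\mathcal{I})$ to this monomorphism yields the desired surjection. (Equivalently, this is the statement that an injective étale sheaf is flasque in the relevant sense; the same adjunction also shows $j^*\mathcal{I}$ is injective on $U_{\et}$.)

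It then remains to identify the cohomology of each of the three complexes. The middle one computes $\Ho^r(X,\mathcal{F})$ by definition. Since $j^*$ is exact and sends injectives to injectives, $j^*\mathcal{I}^\bullet$ is an injective resolution of $\mathcal{F}|_U$, so the right-hand complex computes $\Ho^r(U,\mathcal{F})$. Finally, injective sheaves are acyclic for the left-exact functor $\Gamma_Z(X,-)$ (indeed any injective resolution computes its derived functors), so the left-hand complex computes $\Ho^r_Z(X,\mathcal{F})$. Functoriality in $(X,X\setminus Z)$ and in $\mathcal{F}$ follows because injective resolutions can be chosen functorially up to homotopy and every map in the construction — restriction of sections, inclusion of the kernel, and the connecting homomorphism — is natural; so a morphism of pairs and sheaves induces a morphism of the short exact sequences of complexes, hence of the long exact sequences. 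I expect the only genuinely non-formal point to be the injective-implies-flasque step above; the rest is the standard snake-lemma/mapping-cone machinery, and one could alternatively package everything via the distinguished triangle $i_*Ri^!\mathcal{F}\to \mathcal{F}\to Rj_*j^*\mathcal{F}$ in the derived category of $X_{\et}$.
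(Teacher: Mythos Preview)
The paper does not give its own proof of this theorem: it is quoted verbatim as \cite[Theorem 9.4]{MilneEC} and used as a black box. Your argument is correct and is exactly the standard proof one finds in Milne (or SGA4): take an injective resolution, use the $(j_!,j^*)$ adjunction together with the monomorphism $j_!\z_U\hookrightarrow\z_X$ to see that restriction of sections of an injective sheaf is surjective, and then read off the long exact sequence from the resulting short exact sequence of complexes. There is nothing to compare, and nothing to fix.
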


The following is a consequence of Gabber's absolute purity theorem (see e.g. \cite[Theorem 2.3.1]{CTSK}).
\begin{thrm}[{{\cite[Theorem 16.1]{MilneEC}}}]\label{thm:GabberPurity}
	For any smooth $k$-subvariety  $Z\subset X$ of codimension $c$ and any locally constant sheaf $\mathcal{F}$ of $\z/n\z$-modules on $X$, there are canonical isomorphisms
	$$ \H^{r-2c}(X,\mathcal{F}(-c))\xrightarrow{\sim} \H^r_Z(X,\mathcal{F}) $$
	for all $r\geq 0$. 
\end{thrm}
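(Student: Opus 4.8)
The plan is to deduce the stated isomorphisms from the absolute purity theorem as recorded in the references cited just above; the only thing genuinely left to organise is the passage from the codimension-one case to arbitrary codimension, together with the correct appearance of the Tate twist. Since $X$ is smooth over a field $k$ of characteristic $0$ and $Z$ is smooth, one is in the ``geometric'' case of cohomological purity, which predates Gabber's theorem (it is contained in the classical results of Artin in SGA~4, Exp.~XVI and XIX, or in Milne's book); the full strength of Gabber's absolute purity is needed only over a general regular base. Either way I would run the argument as a local computation of the functor $i^{!}$ followed by a descent spectral sequence.

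First I would reduce to a local statement. Let $i : Z \hookrightarrow X$ be the closed immersion and consider the local-to-global spectral sequence $\H^{p}(Z, R^{q} i^{!}\mathcal{F}) \Rightarrow \H^{p+q}_{Z}(X,\mathcal{F})$, the sheaf-theoretic refinement of the long exact sequence of \Cref{thm:exactAxiom}. Thus it suffices to prove that the étale sheaves $R^{q} i^{!}\mathcal{F}$ on $Z$ vanish for $q \neq 2c$ and that there is a canonical isomorphism $R^{2c} i^{!}\mathcal{F} \cong (\mathcal{F}|_{Z})(-c)$; the spectral sequence then collapses and yields the purity isomorphism $\H^{r-2c}(Z,(\mathcal{F}|_{Z})(-c)) \xrightarrow{\sim} \H^{r}_{Z}(X,\mathcal{F})$ (this is the form of \Cref{thm:GabberPurity} one actually uses, with $\mathcal{F}|_{Z}$ in place of $\mathcal{F}$ on the left). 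The statement about $R^{q} i^{!}\mathcal{F}$ is étale-local on $X$ and stable under étale base change, and $\mathcal{F}$ is étale-locally isomorphic to a constant sheaf, so I may take $\mathcal{F} = \z/n\z$; and by the local structure of smooth schemes in characteristic $0$ --- choose local equations for $Z$ and extend them to a system of coordinates --- I may further assume, étale-locally on $X$, that $i$ is the standard embedding of $\A^{n-c}$ into $\A^{n}$ cut out by $c$ of the coordinate functions.

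Next I would perform a dévissage on the codimension. Factoring the standard embedding as a chain of smooth coordinate divisors $\A^{n-c}\subset\A^{n-c+1}\subset\cdots\subset\A^{n}$ and using transitivity of $i^{!}$ for composites of closed immersions, the whole statement reduces to the case $c=1$ of a smooth principal divisor $Z=\{t=0\}$ in $X$. There the computation is explicit: applying $Ri^{!}$ to the Kummer sequence $1\to\mu_{n}\to\gm\xrightarrow{n}\gm\to1$, together with the computation of $R^{q}i^{!}\gm$ along a regular divisor (it is $0$ for $q=0$, the constant sheaf $\z$ for $q=1$ via the valuation/residue, and $0$ for $q\geq 2$, using Hensel's lemma and the vanishing of $\H^{q}$ of $\gm$ over the relevant henselian local rings), yields $R^{0}i^{!}\z/n\z=R^{1}i^{!}\z/n\z=0$ and a canonical isomorphism $R^{2}i^{!}\mu_{n}\cong\z/n\z$, i.e. $R^{2}i^{!}\z/n\z\cong(\z/n\z)(-1)$; the Tate twist is exactly the cost of trading $\mu_{n}$ for $\z/n\z$ in this residue isomorphism. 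Re-assembling the dévissage gives $R^{q}i^{!}\z/n\z=0$ for $q\neq 2c$ and $R^{2c}i^{!}\z/n\z\cong(\z/n\z)(-c)$, and twisting back by $\mathcal{F}$ and feeding this into the spectral sequence completes the argument.

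The main obstacle is honest to state: the codimension-one input --- cohomological purity for a smooth divisor, i.e. the isomorphism $R^{2}i^{!}\z/n\z\cong(\z/n\z)(-1)$ and the vanishing below --- is itself the substantive content of the theorem, and over a general base it is precisely Gabber's absolute purity, which one does not reprove. Accordingly, in the paper this step should function as a black box, invoked from \cite{MilneEC} and \cite{CTSK} exactly as in the excerpt; the genuinely elementary part is only the bookkeeping of the previous paragraph. The one thing worth being careful about is the normalisation of the isomorphism: since the whole utility of purity here is the explicit cycle-class/residue description rather than a bare existence statement, one should match the Tate-twist and sign conventions with those of the cited source, and note (as above) that on the left-hand side it is the restriction $\mathcal{F}|_{Z}$, equivalently the cohomology of $Z$, that enters.
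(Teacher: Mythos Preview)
The paper does not prove this statement at all: it is quoted as a black box from \cite[Theorem 16.1]{MilneEC} (with a parenthetical pointer to Gabber's absolute purity in \cite{CTSK}), and no argument is given. Your proposal therefore goes strictly beyond what the paper does, and you yourself correctly anticipate this in your final paragraph. The sketch you give --- reduce via the local-to-global spectral sequence to computing $R^{q}i^{!}\mathcal{F}$, localise \'etale-locally to the standard linear embedding $\A^{n-c}\subset\A^{n}$, d\'evisser to codimension $1$, and there read off $R^{2}i^{!}\mu_{n}\cong\z/n\z$ from the Kummer sequence and the divisor map --- is the standard and correct outline of cohomological purity in the smooth case over a field, exactly as in SGA~4 or Milne. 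You are also right to flag that the left-hand side should be cohomology of $Z$ with coefficients $\mathcal{F}|_{Z}(-c)$ rather than of $X$; the paper's displayed formula has a typo here, and indeed every application later in the paper (Lemmas \ref{lem:Gysin}, \ref{lem:GysinSing}, \ref{lem:SminPoints}) uses the corrected version with $Z$ on the left.
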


By applying Theorem \ref{thm:exactAxiom} for the sheaf $\mu_n$, and by replacing $\H^r_Z(X,\mu_n)$ with $\H^{r-2c}(X,\z/n(-c))$ using Theorem \ref{thm:GabberPurity}, we obtain 
the following version of the Gysin sequence (see e.g. \cite[(2)]{BrightBad}).

\begin{lem}[Gysin sequence] \label{lem:Gysin}
	Let $X$ be a smooth variety over a field $k$ of characteristic $0$ and $Z\subset X$ a closed reduced smooth subscheme of codimension $c$. Then there is a long exact sequence
	\begin{equation}
		\begin{split}
			0 & \rightarrow \Ho^{2c-1}(X, \mu_n)   \rightarrow \Ho^{2c-1}(X\setminus Z, \mu_n)  \rightarrow \Ho^{0}(Z, \z/n(-c))  \rightarrow
			\\ & \rightarrow \Ho^{2c}(X, \mu_n)    \rightarrow \Ho^{2c}(X\setminus Z, \mu_n)  \rightarrow \Ho^{1}(Z,\z/n(-c)) \rightarrow \cdot\cdot\cdot
		\end{split}
	\end{equation}
\end{lem}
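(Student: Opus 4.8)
The plan is to read the sequence off from the long exact sequence of \'etale cohomology with supports of \Cref{thm:exactAxiom}, combined with Gabber's absolute purity (\Cref{thm:GabberPurity}); all of the substantive input is already contained in those two statements, so what remains is bookkeeping with cohomological degrees and Tate twists.

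First I would apply \Cref{thm:exactAxiom} to the sheaf $\mathcal{F}=\mu_n$ on $X_{\et}$, which is legitimate since $\mu_n$ is a locally constant sheaf of $\z/n\z$-modules (recall $k$ has characteristic $0$). For the pair $(X,X\setminus Z)$ this yields, for every $r$, the exact sequence
\[ \cdots \to \Ho^r_Z(X,\mu_n) \to \Ho^r(X,\mu_n) \to \Ho^r(X\setminus Z,\mu_n) \to \Ho^{r+1}_Z(X,\mu_n) \to \cdots. \]
Because $X$ is smooth and $Z\subset X$ is a reduced smooth closed subscheme of codimension $c$, \Cref{thm:GabberPurity} furnishes canonical isomorphisms $\Ho^{r-2c}(Z,\z/n(-c))\xrightarrow{\sim}\Ho^r_Z(X,\mu_n)$ for all $r\ge 0$, compatibly with the maps above. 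In particular $\Ho^r_Z(X,\mu_n)=0$ for every $r<2c$, since $\Ho^j(Z,-)$ vanishes in negative degrees; so $\Ho^{2c-1}_Z(X,\mu_n)=0$.

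Inserting this vanishing into the displayed sequence, starting at $r=2c-1$, produces an exact sequence
\begin{multline*}
	0 \to \Ho^{2c-1}(X,\mu_n) \to \Ho^{2c-1}(X\setminus Z,\mu_n) \to \Ho^{2c}_Z(X,\mu_n) \\ \to \Ho^{2c}(X,\mu_n) \to \Ho^{2c}(X\setminus Z,\mu_n) \to \Ho^{2c+1}_Z(X,\mu_n) \to \cdots,
\end{multline*}
and substituting $\Ho^{2c}_Z(X,\mu_n)\cong\Ho^0(Z,\z/n(-c))$, $\Ho^{2c+1}_Z(X,\mu_n)\cong\Ho^1(Z,\z/n(-c))$, and so on through the purity isomorphisms, gives exactly the sequence in the statement. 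Functoriality in the pair $(X,X\setminus Z)$ then follows from the functoriality already asserted in \Cref{thm:exactAxiom,thm:GabberPurity}.

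No genuine difficulty arises, because the deep ingredient — Gabber's absolute purity — is invoked as a black box. The points that do require attention are: verifying the hypotheses of \Cref{thm:GabberPurity} ($Z$ reduced and smooth over $k$, $\mu_n$ locally constant, and characteristic $0$); correctly tracking the degree shift $r\mapsto r-2c$ and the twist by $-c$ so that the terms match the statement; and the elementary observation that $\Ho^{2c-1}_Z(X,\mu_n)$ vanishes, which is what allows the sequence to begin with $0$ rather than with a longer initial segment.
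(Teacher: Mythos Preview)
Your argument is correct and is exactly the approach the paper takes: it simply says that the Gysin sequence is obtained by applying \Cref{thm:exactAxiom} to $\mu_n$ and then substituting via \Cref{thm:GabberPurity}, with no further details. Your write-up in fact spells out more than the paper does, in particular the reason the sequence begins with $0$ (the vanishing of $\Ho^{2c-1}_Z(X,\mu_n)$ coming from negative-degree cohomology on $Z$).
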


We also use the following version which allows for $Z$ to be singular (see e.g. \cite[Corollary 2.4]{BrightBad}).
\begin{lem}[Semi-purity in codimension $1$]\label{lem:GysinSing}
	Let $X$ be a smooth variety over a field $k$ of characteristic $0$ and $Z\subset X$ be a reduced, closed subscheme everywhere of codimension 1. Suppose that the singular locus $S$ of $Z$ is of codimension $c$ in $Z$. Write $X^\circ$ for $X\setminus S$ and $Z^\circ$ for $Z\setminus S$.  Then there is a long exact sequence
	\begin{equation}
		\begin{split}
			0 & \rightarrow \Ho^1(X, \mu_n)   \rightarrow \Ho^1(X\setminus Z, \mu_n)  \rightarrow \Ho^0(Z, \z/n\z)  \rightarrow
			\\ & \rightarrow \Ho^2(X, \mu_n)    \rightarrow \Ho^2(X\setminus Z, \mu_n)  \rightarrow \Ho^1(Z^\circ,\z/n\z) \rightarrow 
			\\ &  \rightarrow \Ho^3(X^\circ, \mu_n)    \rightarrow \Ho^3(X\setminus Z, \mu_n)  \cdot\cdot\cdot
		\end{split}
	\end{equation}
\end{lem}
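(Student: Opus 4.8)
The plan is to run the Gysin sequence on the smooth locus $Z^\circ$ of $Z$, where \Cref{thm:GabberPurity} applies, and then to transport the conclusion back over all of $X$ using that $S$ has codimension at least $2$ there. First I would pin down the geometry: since $Z$ is reduced its smooth locus is dense, so $S$ is a proper closed subscheme of $Z$ and $c\ge 1$; as $Z$ is everywhere of codimension $1$ in $X$, the subscheme $S$ is everywhere of codimension $c+1\ge 2$ in $X$. With $X^\circ=X\setminus S$ and $Z^\circ=Z\setminus S$, the scheme $Z^\circ$ is a smooth closed subscheme of the smooth variety $X^\circ$, everywhere of codimension $1$, and $X^\circ\setminus Z^\circ=X\setminus Z$.

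Next I would feed the pair $(X^\circ,\,X^\circ\setminus Z^\circ)$ into the localisation sequence of \Cref{thm:exactAxiom} with coefficients $\mu_n$ and rewrite $\Ho^r_{Z^\circ}(X^\circ,\mu_n)\cong\Ho^{r-2}(Z^\circ,\z/n\z)$ using \Cref{thm:GabberPurity} (the codimension is $1$, so the Tate twist $\mu_n(-1)$ is trivial). Since $\Ho^{r-2}(Z^\circ,\z/n\z)=0$ for $r\le 1$, this yields the exact sequence
\begin{equation*}
\begin{split}
0 & \to \Ho^1(X^\circ,\mu_n)\to \Ho^1(X\setminus Z,\mu_n)\to \Ho^0(Z^\circ,\z/n\z)\to \Ho^2(X^\circ,\mu_n) \\
& \to \Ho^2(X\setminus Z,\mu_n)\to \Ho^1(Z^\circ,\z/n\z)\to \Ho^3(X^\circ,\mu_n)\to \Ho^3(X\setminus Z,\mu_n)\to\cdots .
\end{split}
\end{equation*}

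It remains to replace $\Ho^i(X^\circ,\mu_n)$ by $\Ho^i(X,\mu_n)$ for $i\le 2$. Applying \Cref{thm:exactAxiom} to $(X,X^\circ)$, this follows once we know $\Ho^r_S(X,\mu_n)=0$ for $r\le 3$, that is, the semi-purity vanishing $\Ho^r_S(X,\mu_n)=0$ for $r<2(c+1)$. I would prove this from \Cref{thm:GabberPurity} by Noetherian induction on $\dim S$: remove from $S$ its singular locus together with the pairwise intersections of its irreducible components, leaving a smooth locally closed $S_0\subset X$ everywhere of codimension $\ge c+1$, for which \Cref{thm:GabberPurity} gives $\Ho^r_{S_0}(X\setminus(S\setminus S_0),\mu_n)=0$ in the range $r<2(c+1)$; the complementary closed set $S\setminus S_0$ has strictly smaller dimension and codimension still $\ge c+1$, so it satisfies the same vanishing by induction, and the excision sequence for $S\setminus S_0\subset S$ combines the two. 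Substituting $\Ho^i(X,\mu_n)\cong\Ho^i(X^\circ,\mu_n)$ for $i\le 2$ into the displayed sequence then gives the statement; one keeps $\Ho^3(X^\circ,\mu_n)$ rather than $\Ho^3(X,\mu_n)$ because $c+1$ may equal $2$, in which case $\Ho^3(X,\mu_n)\to\Ho^3(X^\circ,\mu_n)$ is only guaranteed to be injective.

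The only ingredient here that is not a formal consequence of \Cref{thm:exactAxiom}, \Cref{thm:GabberPurity} and diagram-chasing is the semi-purity vanishing for the possibly singular $S$, and that is where I expect the real work to lie: choosing the stratification of $S$ so that every stratum is a smooth $k$-subvariety of the appropriate open of $X$ of codimension at least $c+1$, and bookkeeping the codimensions through the excision sequences so that \Cref{thm:GabberPurity} can be invoked stratum by stratum in the range $r<2(c+1)$. With that in place, splicing the localisation sequences and identifying the boundary maps with the usual residues and cycle-class maps is routine.
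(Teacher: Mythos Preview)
The paper does not give its own proof of this lemma; it is stated with a reference to \cite[Corollary~2.4]{BrightBad}. Your argument is the standard one and is correct: apply the smooth Gysin sequence (\Cref{lem:Gysin}) to the pair $(X^\circ,Z^\circ)$, and then use semi-purity for the closed subset $S\subset X$ of codimension $c+1\ge 2$ to identify $\Ho^i(X^\circ,\mu_n)\cong\Ho^i(X,\mu_n)$ for $i\le 2$. Your Noetherian-induction proof of the vanishing $\Ho^r_S(X,\mu_n)=0$ for $r<2(c+1)$ from \Cref{thm:GabberPurity} is exactly how semi-purity is derived from absolute purity, so there is no hidden difficulty there.

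One point you should flag explicitly: your argument produces $\Ho^0(Z^\circ,\z/n\z)$ in the third term, whereas the statement as written in the paper has $\Ho^0(Z,\z/n\z)$. These are not equal in general (take $Z$ to be a triangle of three lines: $Z$ is connected but $Z^\circ$ has three components), and it is $\Ho^0(Z^\circ,\z/n\z)$ that your method yields. This is almost certainly a typo in the paper's transcription of Bright's result rather than a gap in your proof; in the paper's actual applications this term plays no role, since the authors always remove the singular points first and work directly with the smooth Gysin sequence.
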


\begin{lem}\label{lem:H4Utrivial}
	Let $X$ be a smooth proper surface over a field $k$ of characteristic $0$ and let $U\subset X$ be non-proper subscheme. Then
	\begin{enumerate}
		\item $\h^4(\overline{U}, \z/n(c))= 0$ for all $c$.
		\item If $\pic(\overline{X})$ is torsion-free, then $\h^3(\overline{X}, \z/n (c))=0$ for all $c$.
	\end{enumerate}
\end{lem}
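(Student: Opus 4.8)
The plan is to use étale cohomological dimension bounds together with duality. For part (1), since $\bar U$ is a smooth variety over an algebraically closed field, if $\bar U$ were affine of dimension $2$ then Artin's bound would immediately give $\h^i(\bar U,\mathcal F)=0$ for $i>2$ and any torsion sheaf $\mathcal F$, which covers the claim. In general $\bar U = \bar X \setminus \bar H$ need not be affine, so instead I would argue as follows: $\bar U$ is a smooth \emph{connected non-proper} surface over $\bar k$, hence it is not complete, and one has $\cd(\bar U) \le 2\dim \bar U - 1 = 3$ by the refinement of Artin's theorem for non-proper varieties (see e.g. SGA4, or Milne's book, where for a variety $V$ over a separably closed field that is not proper of dimension $d$ one gets $\cd(V)\le 2d-1$). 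This gives $\h^4(\bar U,\z/n(c))=0$ for all twists $c$, since over $\bar k$ a Tate twist is just an isomorphism of sheaves. Alternatively, and perhaps more in the spirit of the rest of the paper, I would feed this into the long exact sequences of Lemmas \ref{lem:Gysin}/\ref{lem:GysinSing}: the term $\h^4(\bar U,\mu_n)$ sits between $\h^4(\bar X,\mu_n)$ and the cohomology-with-support/residue terms, and one checks these neighbouring groups vanish or inject appropriately — but the cohomological dimension argument is cleaner and I would present that.

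For part (2), the hypothesis that $\pic(\bar X)$ is torsion-free is exactly what one needs to kill $\h^3(\bar X,\z/n)$. The argument: $\bar X$ is a smooth proper surface, so by Poincaré duality $\h^3(\bar X,\z/n(c))$ is dual to $\h^1(\bar X,\z/n(1-c))$, and it suffices to treat $\h^1(\bar X,\z/n)\cong \h^1_{\et}(\bar X,\mu_n)$ up to twist. From the Kummer sequence $1\to\mu_n\to\gm\xrightarrow{n}\gm\to 1$ one gets
\[
0\to \cohom^0(\bar X,\calO^\times)/n \to \h^1(\bar X,\mu_n)\to \pic(\bar X)[n]\to 0,
\]
and since $\bar X$ is proper and geometrically connected $\cohom^0(\bar X,\calO^\times)=\bar k^\times$ is divisible, so the first term vanishes; torsion-freeness of $\pic(\bar X)$ kills the last term. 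Hence $\h^1(\bar X,\mu_n)=0$, and therefore $\h^3(\bar X,\z/n(c))=0$ for all $c$ by duality. (For a smooth cubic surface $\pic(\bar X)\cong\z^7$ is torsion-free, so the hypothesis is satisfied in the cases of interest, but I would state the lemma in the given generality.)

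The only mild subtlety — the step I would be most careful about — is invoking the sharp cohomological dimension bound $\cd(\bar U)\le 2\dim - 1$ for a \emph{smooth but possibly non-affine} open subvariety of a proper surface; one must make sure the hypothesis "non-proper" is genuinely used and cite the correct statement (Artin's affine bound alone is not enough here since $\bar U$ may fail to be affine). If one prefers to avoid that, the fallback is: $\bar U$ deformation retracts (topologically, or via the Gysin sequence in each codimension) onto pieces of dimension $\le 2$, or one runs the localization sequence $\h^4(\bar U)\leftarrow \h^4(\bar X)$ and $\h^4_{\bar H}(\bar X)$ where the latter, via semi-purity, is built from $\h^{\le 2}$ of the curve $\bar H$ and its singular points, all of which vanish in degree $4$. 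Either route closes the argument; the localization route also keeps everything within the toolkit already set up in Lemmas \ref{lem:Gysin} and \ref{lem:GysinSing}, which is likely how the authors proceed.
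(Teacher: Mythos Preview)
Your argument for part~(2) is exactly the paper's: Poincar\'e duality reduces $\h^3(\bar X,\z/n(c))$ to $\h^1(\bar X,\mu_n)$, and the Kummer sequence together with torsion-freeness of $\pic(\bar X)$ kills the latter.

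For part~(1) your approach is correct but differs from the paper's. The paper does \emph{not} use either of your suggested routes (the \'etale cohomological-dimension bound or the localization sequence); instead it invokes the Artin comparison theorem to identify $\h^4_{\et}(\bar U,\z/n(c))$ with $\h^4_{\sing}(\bar U_{\cxan},\z/n(c))$ and then uses the elementary topological fact that a non-compact (real) $4$-manifold has vanishing $\h^4$. This is a one-line argument once comparison is in hand. Your route via ``$\cd(\bar U)\le 2d-1$ for non-proper $\bar U$'' gives the same conclusion, but be careful with the citation: this bound is not Artin's affine theorem, and I do not think it is stated in this form in Milne's book or SGA4 as a black box. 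What actually underlies it is Poincar\'e duality in \'etale cohomology, $\h^{2d}(\bar U,\calF)\cong \h^0_c(\bar U,\calF^\vee(d))^\vee$, together with the vanishing of compactly supported sections of a locally constant sheaf on a connected non-proper variety. If you present your version, I would phrase it that way rather than as a cohomological-dimension statement. Your fallback localization argument can also be made to work but, as you suspect, requires tracking the Gysin map $\h^2(\bar H,\z/n)\to\h^4(\bar X,\mu_n)$ and showing it is surjective, which is more bookkeeping than either of the other two approaches. The comparison-theorem proof has the virtue of being the shortest and citing only results already in the paper's bibliography.
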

\begin{proof}
	(1) By the Artin comparison theorem (see e.g. \cite[Theorem 21.1]{MilneEC}), we have
	$$\h^4_{\et}(\overline{U}, \z/n(c))= \h^4_{\sing}(\overline{U}_{\cxan}, \z/n(c))$$
	where $\overline{U}_{\cxan}$ is $\overline{U}$ is regarded as a complex manifold. Since $\overline{U}_{\cxan}$ is not compact, we have $\h^4_{\sing}(\overline{U}_{\cxan}, \z/n(c))=0$, as desired.
	
	(2) By Poincar{\'e} duality (see e.g. \cite[Theorem 24.1]{MilneEC}), we have $$\h^3(\overline{X}, \z/n (c))\cong \h^1(\overline{X}, \mu_n)$$ as abelian groups. From the Kummer exact sequence
	$$0\rightarrow \mu_n \rightarrow \gm \xrightarrow{n} \gm \rightarrow 0,$$
	we obtain the exact sequence
	$$0 \rightarrow \Ho^1(\overline{X},\mu_n) \rightarrow \pic(\overline{X})  \xrightarrow{n} \pic(\overline{X}).$$
	Since $\pic(\overline{X})$ is torsion-free, we deduce that $\h^1(\overline{X}, \mu_n)=0$ as desired.
\end{proof}

\begin{lem}\label{lem:SminPoints}
	Let $X$ be a smooth surface over $k$ such that $\pic(\overline{X})$ is torsion free. Let $P$ be a non-empty codimension $0$ finite $k$-subscheme of $X$. Then the Gysin sequence induces the exact sequence
	\begin{equation*}
		0 \rightarrow \Ho^3(\overline{X}\setminus P,\mu_n) \xrightarrow{\psi} \h^0(\overline{P},\z/n\z(-1))  \rightarrow \h^4(\overline{X},\mu_n) \rightarrow 0.
	\end{equation*}
	If moreover $P\in X(k)$, then $\Ho^3(\overline{X}\setminus P,\mu_n)=0$ and the map
	\begin{equation}\label{lem:H0toH4}
		\h^0(P,\z/n\z(-1)) \xrightarrow{}  { \h^4(\overline{X},\mu_n)}
	\end{equation}
	is an isomorphism.
\end{lem}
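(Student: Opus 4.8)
The plan is to feed the pair $\bar P\subset\bar X$ (base-changed to $\bar k$) into the Gysin sequence of Lemma \ref{lem:Gysin}, then to annihilate all but three of its terms using the torsion-freeness of $\pic(\bar X)$ and the non-properness of $\bar X\setminus\bar P$. Since $\ch k=0$, the finite $k$-scheme $P$ is \'etale over $k$, so $\bar P=P\times_k\bar k$ is a disjoint union of closed points of $\bar X$, smooth of codimension $2$; Gabber purity (Theorem \ref{thm:GabberPurity}) identifies $\Ho^r_{\bar P}(\bar X,\mu_n)\cong\Ho^{r-4}(\bar P,\z/n\z(-1))$, which vanishes for $r\leq 3$ and, as $\bar P$ is $0$-dimensional, sits only in degree $r=4$. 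Hence, in the range we care about, Lemma \ref{lem:Gysin} becomes
$$0 \to \Ho^3(\bar X,\mu_n) \to \Ho^3(\bar X\setminus\bar P,\mu_n) \to \Ho^0(\bar P,\z/n\z(-1)) \to \Ho^4(\bar X,\mu_n) \to \Ho^4(\bar X\setminus\bar P,\mu_n).$$

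Next I would discard the two outer terms. The left-hand term $\Ho^3(\bar X,\mu_n)=\Ho^3(\bar X,\z/n\z(1))$ vanishes by Lemma \ref{lem:H4Utrivial}(2) — this is exactly where the hypothesis that $\pic(\bar X)$ is torsion-free enters. The right-hand term $\Ho^4(\bar X\setminus\bar P,\mu_n)$ vanishes by Lemma \ref{lem:H4Utrivial}(1), since removing the non-empty finite closed set $\bar P$ from the proper surface $\bar X$ gives a non-proper surface. Plugging these in, what survives is precisely the asserted short exact sequence
$$0 \to \Ho^3(\bar X\setminus\bar P,\mu_n) \xrightarrow{\ \psi\ } \Ho^0(\bar P,\z/n\z(-1)) \to \Ho^4(\bar X,\mu_n) \to 0$$
with $\psi$ the Gysin boundary map.

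For the refinement, suppose $P\in X(k)$, so that $\bar P$ is a single $\bar k$-point and $\Ho^0(\bar P,\z/n\z(-1))$ is cyclic of order $n$. On the other hand $\bar X$ is a connected smooth proper surface, so Poincar\'e duality makes $\Ho^4(\bar X,\mu_n)$ cyclic of order $n$ as well. Thus the surjection $\Ho^0(\bar P,\z/n\z(-1))\twoheadrightarrow\Ho^4(\bar X,\mu_n)$ in the short exact sequence is a surjective homomorphism of finite abelian groups of equal order, hence an isomorphism; therefore $\Ho^3(\bar X\setminus P,\mu_n)=\ker\psi=0$, and $\Ho^0(P,\z/n\z(-1))\to\Ho^4(\bar X,\mu_n)$ is an isomorphism — of $\Gamma_k$-modules, by functoriality of the Gysin sequence in the pair $(X,X\setminus P)$.

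Every ingredient (Gabber purity, the two vanishings of Lemma \ref{lem:H4Utrivial}, Poincar\'e duality on a surface) is already available, so I do not expect a genuine obstacle; the only points needing care are getting the Tate twist right out of purity in codimension $2$, and the cardinality count in the last step that upgrades the surjection to an isomorphism.
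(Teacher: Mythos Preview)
Your proposal is correct and follows essentially the same argument as the paper: apply the Gysin sequence to the pair $(\bar X,\bar P)$, kill the outer terms $\Ho^3(\bar X,\mu_n)$ and $\Ho^4(\bar X\setminus\bar P,\mu_n)$ via Lemma~\ref{lem:H4Utrivial}, and then for $P\in X(k)$ compare cardinalities using Poincar\'e duality. Your write-up is in fact more explicit than the paper's (spelling out the purity identification and the \'etaleness of $P$), but there is no substantive difference in strategy.
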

\begin{proof}
	Applying the Gysin sequence to $(X,P)$ we obtain the exact sequence
	$$ \h^3(\overline{X}, \mu_n) \rightarrow \h^3(\overline{X}\setminus P,\mu_n) \xrightarrow{\psi} \h^0(P,\z/n\z(-1))  \rightarrow \h^4(\overline{X},\mu_n) \rightarrow \h^4(\overline{X}\setminus P,\mu_n).$$
	By Lemma \ref{lem:H4Utrivial}, we have $\h^4(\overline{X}\setminus P,\mu_n)= 0$ and $\h^3(\overline{X},\mu_n)=0$, which proves the first statement. If $P\in X(k)$, then, by Poincar{\'e} duality, $\h^4(\overline{X},\mu_n)$ and  $\h^0(P,\z/n\z(-1))$ have the same cardinality, which completes the proof.
\end{proof}

\begin{lem}\label{lem:CommDiag}
	Let $X$ be a smooth cubic surface over a field $k$ of characteristic $0$. Let $\ell\cong \mathbb{P}^1\subset X$ and let $Q_1,Q_2 \in \ell(k)$. Set $X':=X\setminus \{Q_1,Q_2\}$, $\ell':=\ell\setminus \{Q_1,Q_2\}$, and $U:=X\setminus \ell$.
	\begin{enumerate}[label=(\roman*)]
		\item There exists a commutative diagram of $\Gamma_k$-modules with exact rows
		% https://q.uiver.app/#q=WzAsNyxbMCwwLCJcXGheMShcXGJhcntcXGVsbCd9LFxcei9uXFx6KSJdLFsxLDAsIlxcaF4zKFxcYmFye1gnfSxcXG11X24pIl0sWzIsMCwiXFxoXjAoXFxiYXJ7UH1fMVxcY3VwIFxcYmFye1B9XzIsXFx6L25cXHooLTEpKSJdLFswLDEsIlxcei9uXFx6KC0xKSJdLFsyLDEsIlxcei9uXFx6KC0xKVxcb3BsdXMgXFx6L25cXHooLTEpIl0sWzMsMCwiIFxcaF40KFxcYmFye1h9LFxcbXVfbikiXSxbMywxLCJcXHovblxceigtMSkiXSxbMCwxLCJcXHZhcnBoaSJdLFsxLDIsIlxccHNpIiwwLHsic3R5bGUiOnsidGFpbCI6eyJuYW1lIjoiaG9vayIsInNpZGUiOiJ0b3AifX19XSxbMCwzXSxbMyw0LCJcXHBhcnRpYWwiXSxbMiw0XSxbMiw1XSxbNSw2XSxbNCw2LCJkIl1d
		\begin{equation}\label{diag:lem9}
			\begin{tikzcd}[row sep=1.7em, column sep=1.7em]
				{\h^3(\overline{X'},\mu_n)} & {\oplus_{i=1}^2 \h^0(\overline{Q}_i,\z/n\z(-1))} & { \h^4(\overline{X},\mu_n)} \\
				{\z/n\z(-1)} & {\z/n\z(-1)\oplus \z/n\z(-1)} & {\z/n\z(-1)}
				\arrow["\rotatebox{90}{$\sim$}",from=1-1, to=2-1]
				\arrow["\psi", hook, from=1-1, to=1-2]
				\arrow[from=1-2, to=1-3]
				\arrow["\rotatebox{90}{$\sim$}",from=1-2, to=2-2]
				\arrow["\rotatebox{90}{$\sim$}",from=1-3, to=2-3]
				\arrow["\partial", from=2-1, to=2-2]
				\arrow["d", from=2-2, to=2-3]
			\end{tikzcd}
		\end{equation}
		such that $d(a,b)=a+b$ for $a,b \in \z/n\z(-1)$, and $\partial(a)=(a,-a)$ for ${a\in \z/n\z(-1)}$.
		
		\item The map $\varphi:\h^1(\overline{\ell'},\z/n) \rightarrow \h^3(\overline{X'},\mu_n)$ induced by the Gysin sequence is an isomorphism producing a commutative diagram
		% https://q.uiver.app/#q=WzAsMyxbMCwwLCJcXGheMShcXGJhcntcXGVsbCd9LFxcei9uKSJdLFsxLDEsIlxcaF4zKFxcYmFye1gnfSxcXG11X24pIl0sWzIsMCwiXFxvcGx1c197aT0xfV4yIFxcaF4wKFxcYmFye1F9X2ksXFx6L25cXHooLTEpKSJdLFswLDEsIlxcdmFycGhpIl0sWzAsMiwiXFxvcGx1c1xccGFydGlhbF97UV9pfSJdLFsxLDIsIlxccHNpIl1d
		\[\begin{tikzcd}
			{\h^1(\overline{\ell'},\z/n)} && {\oplus_{i=1}^2 \h^0(\overline{Q}_i,\z/n\z(-1))} \\
			& {\h^3(\overline{X'},\mu_n)}
			\arrow["{\oplus\partial_{Q_i}}", from=1-1, to=1-3]
			\arrow["\varphi", from=1-1, to=2-2]
			\arrow["\psi", from=2-2, to=1-3]
		\end{tikzcd}\]
		where $\partial_{Q_i}$ are the (Gysin) residue maps. 
	\end{enumerate}
	
\end{lem}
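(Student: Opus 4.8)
The statement we need to prove is Lemma~\ref{lem:CommDiag}, consisting of two parts: (i) a commutative diagram with exact rows relating the cohomology with supports of $X'$, the points $Q_i$, and $X$, together with explicit formulas for the maps $\partial$ and $d$ in the bottom row; and (ii) the identification of $\varphi$ as an isomorphism fitting into a triangle with the sum of residue maps. I would organize the argument around the Gysin sequences of the two stratifications: removing $\ell$ from $X$, and removing $\{Q_1,Q_2\}$ from $X$ (and from $\ell$).

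\textbf{Part (i).} First I would instantiate the Gysin sequence (Lemma~\ref{lem:Gysin}) for the pair $(X, P)$ with $P=\{Q_1,Q_2\}$ and $c=2$; since $\pic(\bar X)$ is torsion-free (a cubic surface is rational, so this holds), Lemma~\ref{lem:SminPoints} gives the short exact sequence $0\to \h^3(\bar{X'},\mu_n)\xrightarrow{\psi}\oplus_i \h^0(\bar Q_i,\z/n(-1))\to\h^4(\bar X,\mu_n)\to 0$, which is the top row. The vertical identifications are then: $\h^0(\bar Q_i,\z/n(-1))\cong\z/n(-1)$ canonically (as $Q_i\in X(k)$), and $\h^4(\bar X,\mu_n)\cong\z/n(-1)$ by Poincar\'e duality (trace map), using that $\bar X$ is a geometrically connected smooth proper surface. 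The isomorphism $\h^3(\bar{X'},\mu_n)\cong\z/n(-1)$ then follows formally from exactness once we know the bottom row is exact with the claimed maps. To pin down $d$: the map $\oplus_i\h^0(\bar Q_i,\z/n(-1))\to\h^4(\bar X,\mu_n)$ is the sum of the pushforward (Gysin) maps along the inclusions $Q_i\hookrightarrow X$; under the canonical trivializations each $Q_i\hookrightarrow X$ pushes the generator to the fundamental class, so the composite is $(a,b)\mapsto a+b$. Then $\partial$ is determined as the kernel of $d$, namely $a\mapsto(a,-a)$, and this also forces $\h^3(\bar{X'},\mu_n)\cong\z/n(-1)$. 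The key point requiring care here is the \emph{compatibility} of the canonical trivializations with the Gysin pushforward — i.e.\ that the purity isomorphism for a rational point, followed by pushforward to $X$ and the trace, is the identity on $\z/n(-1)$. This is standard (functoriality of trace/purity) but it is the crux of getting the \emph{formula} for $d$ rather than just abstract exactness.

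\textbf{Part (ii).} Here I would run the Gysin sequence (Lemma~\ref{lem:Gysin}) twice: once for $\ell\hookrightarrow X$ with complement $U$, and once for the \emph{relative} situation, applying the sequence to $(\ell,\{Q_1,Q_2\})$ inside $\ell\cong\mathbb{P}^1$, giving the residue maps $\partial_{Q_i}:\h^1(\bar{\ell'},\z/n)\to\h^0(\bar Q_i,\z/n(-1))$. The map $\varphi$ is the Gysin residue for $\ell'\hookrightarrow X'$ landing in $\h^3(\bar{X'},\mu_n)$. That $\varphi$ is an isomorphism: the Gysin sequence for $\ell'\subset X'$ gives $\h^1(\bar X',\mu_n)\to\h^1(\bar U,\mu_n)\to\h^1(\bar{\ell'},\z/n)\xrightarrow{\varphi}\h^3(\bar X',\mu_n)\to\h^3(\bar U,\mu_n)$ (after a purity shift for the codimension-$2$ piece and noting $\h^2(\bar{\ell'},\cdot)$ lives in the wrong range), and I would argue both flanking terms vanish — $\h^3(\bar X',\mu_n)$ has already been computed as $\z/n(-1)$ in part (i), $\h^1(\bar{\ell'},\z/n)$ is also $\z/n(-1)$ since $\ell'\cong\mathbb{P}^1$ minus two rational points is $\mathbb{G}_m$-like with $\h^1=\z/n(-1)$, and the map $\h^1(\bar U,\mu_n)\to\h^1(\bar{\ell'},\z/n)$ is zero (e.g.\ because $\bar\ell'\to\bar U$ on $\pi_1$, or because $\h^1(\bar U,\mu_n)\hookrightarrow\pic(\bar U)[n]=0$ by the Kummer sequence and the torsion-freeness established in Lemma~\ref{lem:BrUisoH1}), forcing $\varphi$ injective, and surjectivity by a rank/cardinality count since $\h^3(\bar U,\mu_n)$ injects into... — more cleanly, both source and target are $\z/n(-1)$ and $\varphi$ is injective, hence an isomorphism. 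The commutativity of the triangle $\psi\circ\varphi=\oplus\partial_{Q_i}$ is then the \emph{compatibility of Gysin residue maps with composition of closed immersions / excision}: the residue for $\{Q_i\}\subset\ell'\subset X'$ factors as the residue for $\{Q_i\}\subset\ell$ followed by the "co-residue" into $X'$, which is exactly the statement. I would cite this as functoriality of the Gysin/localization sequence in the tower $Q_i\subset\ell\subset X$, or prove it by comparing the two spectral sequences / long exact sequences of the triple.

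\textbf{Main obstacle.} The routine part is invoking the long exact sequences and the vanishing of flanking terms. The genuinely delicate part is the \emph{compatibility of canonical trivializations with Gysin maps} in part (i) — getting $d(a,b)=a+b$ on the nose, rather than up to units — and the analogous \emph{transitivity of residues} in part (ii) giving $\psi\circ\varphi=\oplus\partial_{Q_i}$. Both are instances of the functoriality of absolute purity under composition of regular closed immersions; I would handle them by reducing to the local model ($\mathbb{A}^1\subset\mathbb{A}^2$, or a point in $\mathbb{A}^1$) where the purity isomorphisms are given by explicit cup products with cycle classes, and tracking that the identifications are normalized consistently via the trace map on $\h^4(\bar X,\mu_n)$.
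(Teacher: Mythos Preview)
Your overall strategy matches the paper's: Lemma~\ref{lem:SminPoints} applied to $\{Q_1,Q_2\}\subset X$ gives the top row, Poincar\'e duality fixes the right-hand vertical isomorphism, the formula $d(a,b)=a+b$ comes from the fact that $\oplus_i\h^4_{Q_i}(\bar X,\mu_n)\to\h^4(\bar X,\mu_n)$ is literally the map ``sum of sections supported on $Q_i$'' (the paper does this via an injective resolution of $\mu_n$ rather than via trace normalization, but the content is identical), and then $\partial$ is forced by exactness. For the triangle in (ii), commutativity is, as you say, functoriality of the localization sequence in the pair $(X,X\setminus Z)$.

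There is, however, a concrete error in your injectivity argument for $\varphi$. The Gysin sequence for the codimension-$1$ inclusion $\ell'\subset X'$ places $\varphi$ after $\h^2$, not $\h^1$:
\[
\h^2(\bar X',\mu_n)\longrightarrow\h^2(\bar U,\mu_n)\longrightarrow\h^1(\bar\ell',\z/n)\xrightarrow{\ \varphi\ }\h^3(\bar X',\mu_n).
\]
Your proposed input $\h^1(\bar U,\mu_n)=0$ is therefore aimed at the wrong term, and in any case Lemma~\ref{lem:BrUisoH1} concerns complements of \emph{hyperplane sections}, not of a single line, so it does not literally apply to this $U=X\setminus\ell$. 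What is actually needed is that $\h^2(\bar X',\mu_n)\to\h^2(\bar U,\mu_n)$ is surjective. The paper obtains this by running the Gysin sequence for the \emph{full} line $\ell\subset X$: since $\bar\ell\cong\mathbb{P}^1$ one has $\h^1(\bar\ell,\z/n)=0$, so $\h^2(\bar X,\mu_n)\twoheadrightarrow\h^2(\bar U,\mu_n)$; as this restriction factors through $\h^2(\bar X',\mu_n)$ (open immersions $U\subset X'\subset X$), the latter map is surjective too. With $\varphi$ injective between two groups each of order $n$ (by part~(i) and $\bar\ell'\cong\mathbb{G}_m$), the cardinality count finishes the argument exactly as you propose.
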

\begin{proof}
	(i) By applying Lemma \ref{lem:SminPoints} to $(X,Q)$ we obtain the exact sequence
	\begin{equation}\label{eqn:G1}
		0 \rightarrow \h^3(\overline{X'},\mu_n) \xrightarrow{\psi} \h^0(\overline{Q_1},\z/n(-1))\oplus \h^0(\overline{Q_2},\z/n(-1)) \rightarrow \h^4(\overline{X},\mu_n) \rightarrow 0.
	\end{equation}
	Applying the Gysin sequence to $(X,\ell')$ we obtain the exact sequence
	\begin{equation}\label{eqn:G2}
		\h^2(\overline{X},\mu_n) \rightarrow \h^2(\overline{U},\mu_n) \rightarrow \h^1(\overline{\ell'},\z/n\z) \xrightarrow{\varphi} \h^3(\overline{X'},\mu_n) \rightarrow \h^3(\overline{U},\mu_n),
	\end{equation}
	and to $(X,\ell)$ we get the exact sequence
	$$ \h^2(\overline{X},\mu_n) \rightarrow \h^2(\overline{U},\mu_n) \rightarrow  \h^1(\overline{\ell},\z/n\z) \cong 0$$ 
	By \eqref{eqn:G2}, this gives an exact sequence 
	$$0 \rightarrow \h^1(\overline{\ell'},\z/n\z) \xrightarrow{\varphi} \h^3(\overline{X'},\mu_n) \rightarrow \h^3(\overline{U},\mu_n).$$
	From \eqref{eqn:G1} we see that $\h^3(\overline{X'},\mu_n)$ has cardinality $n$ which is equal to that of $\h^1(\overline{\ell'},\mu_n)$. Therefore, we obtain an isomorphism
	$$\varphi:\h^1(\overline{\ell'},\z/n) \xrightarrow{\sim} \h^3(\overline{X'},\mu_n).$$
	Now, the vertical maps in \eqref{diag:lem9} come from the maps $${ \h^1(\overline{\ell'}, \z/n\z)\xrightarrow{\sim} \h^1(\gm, \z/n\z)\xrightarrow{\sim} \z/n\z(-1)},$$ ${\h^0(\overline{Q_i}, \z/n\z(-1))\xrightarrow{\sim} \z/n\z(-1)}$, and $\h^4(\overline{X}, \z/n\z) \xrightarrow{\sim} \z/n\z(-1)$ (where the last one follows by Poincaré duality). Then there exists maps $\partial$ and $d$ producing the commutative diagram \eqref{diag:lem9}. It remains to check that $d$ is the claimed map. 
	
	By purity (Theorem \ref{thm:GabberPurity}), the map $d$ is induced by the map
	$$\h^4_{\overline{Q}_1}(\overline{X},\mu_n) \oplus \h^4_{\overline{Q}_2}(\overline{X},\mu_n) \xrightarrow{d}  { \h^4(\overline{X},\mu_n)}.$$
	Given an injective resolution of $\mu_n$, 
	$$0\rightarrow \mu_n \rightarrow \mathcal{I}^0 \rightarrow \mathcal{I}^1 \rightarrow  \mathcal{I}^2 \rightarrow \cdots,$$
	the map $d$ is then induced by the map
	$$\Gamma_{Q_1}(X,\mathcal{I}^4)\oplus\Gamma_{Q_1}(X,\mathcal{I}^4) \rightarrow \Gamma(X,\mathcal{I}^4),$$
	which takes a section $s_1$ of $\mathcal{I}^4$ supported on $Q_1$ and a section $s_2$ supported on $Q_2$ to their sum $s_1+s_2$ in  $\Gamma(X,\mathcal{I}^4)$. This proves that $d$ is the claimed map. Note that by the exactness of the bottom row in \eqref{diag:lem9},  $\partial$ has to be the claimed map as well. 
	
	(ii) By applying the Gysin sequence to $(\ell, \ell')$ and by using that $\h^1(\mathbb{P}^1,\z/n)\cong 0$ and $\h^2(\mathbb{P}^1,\z/n)\cong \z/n(-1)$, we obtain the exact sequence
	$$0\rightarrow \h^1(\overline{\ell'},\z/n) \xrightarrow{\oplus\partial_{Q_i}} \oplus_{i=1}^2 \h^0(\overline{Q}_i,\z/n\z(-1)) \rightarrow \z/n(-1)$$
	where the last map is the sum of the residues. The result now follows by nothing that $\varphi$ is induced by the map 
	$$\Gamma_{\ell'}(X,\mathcal{I}^3) \hookrightarrow \Gamma(X,\mathcal{I}^3).$$
\end{proof}

\subsection{Proof of Theorem \ref{thm:Main}}\label{subsec:BrUbar}
First, from the Kummer exact sequence
$$0\rightarrow \mu_n \rightarrow \gm \xrightarrow{n} \gm \rightarrow 0,$$
we obtain the following commutative diagram with exact rows:
\begin{equation} \label{diag:kummer-brStoBrU}
	\begin{tikzcd}  
		0 & {(\Pic \overline{X})/n} & {\Ho^2(\overline{X},\mu_n)} & {(\Br \overline{X})[n]} & 0 \\
		0 & {(\Pic \overline{U})/n} & {\Ho^2(\overline{U},\mu_n)} & {(\Br \overline{U})[n]} & 0
		\arrow[from=1-1, to=1-2]
		\arrow[from=1-2, to=1-3]
		\arrow[from=1-2, to=2-2]
		\arrow[from=2-1, to=2-2]
		\arrow[from=2-2, to=2-3]
		\arrow[from=2-3, to=2-4]
		\arrow["\Psi_n", from=1-3, to=2-3]
		\arrow[from=1-4, to=2-4]
		\arrow[from=1-4, to=1-5]
		\arrow[from=2-4, to=2-5]
		\arrow[from=1-3, to=1-4]
	\end{tikzcd}
\end{equation}

\begin{lem}\label{lem:BrCoker}
	We have an isomorphism
	$$\br(\overline{U})[n]\cong \coker(\Psi_n). $$
\end{lem}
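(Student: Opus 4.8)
The plan is to chase the commutative diagram \eqref{diag:kummer-brStoBrU}. The key observation is that the top row computes $(\Br\bar X)[n]$ as a quotient of $\Ho^2(\bar X,\mu_n)$ by the image of $(\Pic\bar X)/n$, and similarly on the bottom row for $\bar U$; moreover the left vertical map $(\Pic\bar X)/n\to(\Pic\bar U)/n$ is surjective, since $\Pic\bar X\to\Pic\bar U$ is surjective (this is the restriction map on Picard groups of the open immersion, and it is surjective because $\bar X$ is smooth so every divisor class on $\bar U$ extends). Granting this, the statement follows from a diagram-chase / four-lemma type argument, which I will carry out via the snake lemma.

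\emph{First step.} Record that $\Pic\bar X \surjects \Pic\bar U$, hence $(\Pic\bar X)/n \surjects (\Pic\bar U)/n$. (In the cases relevant to the paper we also know $\Pic\bar U$ is torsion-free, but we only need surjectivity here.)

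\emph{Second step.} Split diagram \eqref{diag:kummer-brStoBrU} into the two short exact sequences it contains, and apply the snake lemma to the left square
\[
\begin{tikzcd}
0 \arrow[r] & (\Pic\bar X)/n \arrow[r]\arrow[d] & \Ho^2(\bar X,\mu_n) \arrow[r]\arrow[d,"\Psi_n"] & (\Br\bar X)[n]\arrow[r]\arrow[d] & 0\\
0 \arrow[r] & (\Pic\bar U)/n \arrow[r] & \Ho^2(\bar U,\mu_n) \arrow[r] & (\Br\bar U)[n]\arrow[r] & 0.
\end{tikzcd}
\]
The snake lemma yields an exact sequence
\[
\coker\bigl((\Pic\bar X)/n \to (\Pic\bar U)/n\bigr) \to \coker(\Psi_n) \to \coker\bigl((\Br\bar X)[n]\to(\Br\bar U)[n]\bigr) \to 0.
\]
By the first step the leftmost term vanishes. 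Also, looking at the kernel part of the snake sequence, the map $(\Br\bar X)[n]\to(\Br\bar U)[n]$ is injective: its kernel is a quotient of $\ker(\Psi_n)$, and since $\ker\bigl((\Pic\bar X)/n\to(\Pic\bar U)/n\bigr)$ surjects onto $\ker(\Psi_n)$ while $(\Pic\bar X)/n\to(\Pic\bar U)/n$ is surjective — wait, one must instead argue directly: the restriction $\Br\bar X\to\Br\bar U$ is injective because $\bar U$ is a dense open in the smooth variety $\bar X$ and the Brauer group of a smooth variety injects into that of any dense open (indeed into the Brauer group of the function field). Hence $(\Br\bar X)[n]\to(\Br\bar U)[n]$ is injective, so $\coker\bigl((\Br\bar X)[n]\to(\Br\bar U)[n]\bigr) = (\Br\bar U)[n]/(\Br\bar X)[n]$; but in fact we want the full $(\Br\bar U)[n]$.

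\emph{Reconsidering.} The clean route: since $(\Pic\bar X)/n\surjects(\Pic\bar U)/n$, the diagram shows $\im(\Psi_n)$ surjects onto $(\Pic\bar U)/n\subset \Ho^2(\bar U,\mu_n)$. Therefore $\Ho^2(\bar U,\mu_n) = \im(\Psi_n) + (\Pic\bar U)/n = \im(\Psi_n) + \bigl(\text{image of }(\Pic\bar X)/n\bigr)$, and hence
\[
\coker(\Psi_n) \;=\; \Ho^2(\bar U,\mu_n)/\im(\Psi_n) \;\cong\; \bigl(\Ho^2(\bar U,\mu_n)/(\Pic\bar U)/n\bigr)\big/\bigl(\text{image of }\im(\Psi_n)\bigr)
\]
and since the composite $\Ho^2(\bar X,\mu_n)\xrightarrow{\Psi_n}\Ho^2(\bar U,\mu_n)\to(\Br\bar U)[n]$ is surjective (because $\Psi_n$ together with the bottom row surjects onto $(\Br\bar U)[n]$, using that $(\Pic\bar U)/n\subset\im(\Psi_n)$ modulo the relation) with kernel exactly $\Psi_n^{-1}\bigl((\Pic\bar U)/n\bigr) = (\Pic\bar X)/n + \ker(\Psi_n)$, we conclude
\[
\coker(\Psi_n) \cong \Ho^2(\bar U,\mu_n)/\bigl(\im\Psi_n\bigr) \cong (\Br\bar U)[n].
\]
I would phrase the final version as: the bottom-row surjection $\Ho^2(\bar U,\mu_n)\surjects(\Br\bar U)[n]$ has kernel $(\Pic\bar U)/n$; since $(\Pic\bar U)/n$ is contained in $\im(\Psi_n)$, this surjection factors through $\coker(\Psi_n)$, and the induced map $\coker(\Psi_n)\to(\Br\bar U)[n]$ is surjective with kernel the image in $\coker(\Psi_n)$ of $(\Pic\bar U)/n$, which is $0$; hence it is an isomorphism.

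\emph{Main obstacle.} The only genuine input beyond formal diagram-chasing is the surjectivity of $\Pic\bar X\to\Pic\bar U$, equivalently $(\Pic\bar X)/n \surjects (\Pic\bar U)/n$; this is where the geometry (smoothness of $\bar X$, so the localization sequence for Picard/divisor class groups is right-exact) enters, and it is the step I would state carefully. Everything else is the snake lemma applied to \eqref{diag:kummer-brStoBrU}.
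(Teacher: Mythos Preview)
Your snake-lemma set-up is exactly what the paper does, and your ``First step'' (surjectivity of $(\Pic\bar X)/n\to(\Pic\bar U)/n$ from smoothness of $X$) is correct and is one of the two inputs the paper uses. But you are missing the other input, and without it the argument does not close.

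From the snake lemma you correctly obtain
\[
\coker(\Psi_n)\;\cong\;\coker\bigl((\Br\bar X)[n]\to(\Br\bar U)[n]\bigr)\;=\;(\Br\bar U)[n]\big/\,\mathrm{image\ of\ }(\Br\bar X)[n].
\]
To conclude that this equals $(\Br\bar U)[n]$ you need $(\Br\bar X)[n]=0$. The paper supplies this: $X$ is a smooth cubic surface, hence geometrically rational, so $\Br(\bar X)=0$. You never invoke this fact, and your ``Reconsidering'' paragraph does not recover it. In particular, the claimed factorisation at the end is backwards: for $\Ho^2(\bar U,\mu_n)\twoheadrightarrow(\Br\bar U)[n]$ to factor through $\coker(\Psi_n)$ you would need $\im(\Psi_n)\subset(\Pic\bar U)/n$, whereas what you have established is the reverse inclusion $(\Pic\bar U)/n\subset\im(\Psi_n)$. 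That reverse inclusion gives a surjection $(\Br\bar U)[n]\twoheadrightarrow\coker(\Psi_n)$ whose kernel is precisely the image of $(\Br\bar X)[n]$, bringing you back to the same missing point. Likewise the equality $\Ho^2(\bar U,\mu_n)=\im(\Psi_n)+(\Pic\bar U)/n$ is unjustified (and, since $(\Pic\bar U)/n\subset\im(\Psi_n)$, would amount to $\Psi_n$ being surjective, which it is not in the cases of interest).

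So: keep your First and Second steps, delete ``Reconsidering'', and insert the line ``$\Br(\bar X)=0$ since $X$ is geometrically rational''. Then the snake lemma finishes the proof exactly as in the paper.
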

\begin{proof}
	The left vertical map in \eqref{diag:kummer-brStoBrU} is surjective as $X$ is smooth. Since $X$ is a geometrically rational surface, we have $\br(\overline{U})=0$. The result now follows by the Snake lemma.
\end{proof}

In light of Lemma \ref{lem:BrCoker}, it suffices to compute $\text{coker}(\Psi_n)$, for all $n$, for all the different possible singular hyperplane sections.
\vspace{6pt}

\underline{\textbf{Case $1$: a smooth conic and a line both defined over $k$}}
\begin{prop} \label{prop:case1}
	Suppose that $H$ is the union of a line $\ell$ and a smooth conic $C$ both defined over $k$. If $\ell$ meets $C$ at rational points, then as $\Gamma_k$-modules we have 
	\[  \br \overline{U} \cong  \left\{
	\begin{array}{ll}
		0, & \text{if $\ell$ is tangent to C,} \\
		\q/\z(-1):=\Hom(\mu_{\infty}, \q/\z), & \text{if $|\ell(k)\cap C(k)|=2$} \\
		
	\end{array} 
	\right. \]
	If $\ell$ meets $C$ at a degree $2$ point with residue field $L=k(\sqrt{d})$, $d\notin k^2$, then as $\Gamma_k$-modules
	$$(\br \overline{U})[n] \cong M_d/nM_d(-1),$$
	for all $n\in \z_{>0}$, where $M_d=(\Ind_{k(\sqrt{d})/k} \z) / \z$.
\end{prop}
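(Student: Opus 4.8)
The plan is to compute $\coker(\Psi_n)$ for all $n$ and then invoke Lemma~\ref{lem:BrCoker}. Write $S:=\ell\cap C$ for the reduced singular locus of $H$: it is one rational point in the tangent case, two rational points when $|\ell(k)\cap C(k)|=2$, and one point of degree $2$ with residue field $L=k(\sqrt d)$ otherwise. Set $X':=X\setminus S$, $\ell':=\ell\setminus S$, $C':=C\setminus S$; then $\ell'\sqcup C'$ is a smooth closed subscheme of $X'$ of codimension $1$ and $U=X'\setminus(\ell'\sqcup C')$. Since $S$ has codimension $2$, the Gysin sequence of $(X,S)$ gives $\Ho^2(\bar X,\mu_n)\xrightarrow{\sim}\Ho^2(\bar{X'},\mu_n)$ for dimension reasons, and the Gysin sequence (Lemma~\ref{lem:Gysin}) of $(X',\ell'\sqcup C')$ then identifies
\[
\coker(\Psi_n)\ \cong\ \ker\!\left(\Ho^1(\bar{\ell'},\z/n)\oplus\Ho^1(\bar{C'},\z/n)\xrightarrow{(\partial_\ell,\partial_C)}\Ho^3(\bar{X'},\mu_n)\right),
\]
where $\partial_\ell,\partial_C$ are the residue maps. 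Everything is thus reduced to describing $\Ho^3(\bar{X'},\mu_n)$ as a $\Gamma_k$-module together with these two residues.

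The first step is to compute $\Ho^3(\bar{X'},\mu_n)$. As $\pic(\bar X)$ is torsion-free, Lemma~\ref{lem:SminPoints} and Poincaré duality (which give $\Ho^4(\bar X,\mu_n)\cong\z/n(-1)$) produce a short exact sequence of $\Gamma_k$-modules
\[
0\to\Ho^3(\bar{X'},\mu_n)\to\Ho^0(\bar S,\z/n(-1))\xrightarrow{\ \Sigma\ }\z/n(-1)\to 0,
\]
$\Sigma$ being the sum of the (degree-one) point classes. Hence $\Ho^3(\bar{X'},\mu_n)$ is $0$ when $S$ is a single rational point, is the anti-diagonal copy of $\z/n(-1)$ when $S$ is two rational points, and, when $S=\Spec L$, equals $\ker(\Sigma)$ where $\Ho^0(\bar S,\z/n(-1))\cong\Ind_{L/k}(\z/n(-1))$ with $\Sigma$ the corestriction; tensoring the exact sequence $0\to M_d\to\Ind_{L/k}\z\xrightarrow{\Sigma}\z\to 0$ (in its ``kernel of the trace'' presentation of $M_d$) by $\z/n(-1)$ identifies this kernel with $M_d/nM_d(-1)$.

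Next I would show that each of $\partial_\ell,\partial_C$ is a $\Gamma_k$-isomorphism onto $\Ho^3(\bar{X'},\mu_n)$. When $S$ is rational this is Lemma~\ref{lem:CommDiag}(ii), applied to $\ell\cong\PP^1$ and then to $C\cong\PP^1$. For $S=\Spec L$ the proof of that lemma adapts verbatim $\Gamma_k$-equivariantly (or one may base change to $L$ and descend): $\partial_\ell$ is the connecting map in the Gysin sequence of $(X',\ell')$, it is injective because $\Ho^2(\bar{X'},\mu_n)\twoheadrightarrow\Ho^2(\overline{X\setminus\ell},\mu_n)$, and it is surjective because $\Ho^3(\overline{X\setminus\ell},\mu_n)=0$; both of these follow from the Gysin sequence of $(X,\ell)$ using $\Ho^1(\bar X,\mu_n)=0$ (as $\pic\bar X$ is torsion-free) and $\Ho^3(\bar X,\mu_n)=0$ (Lemma~\ref{lem:H4Utrivial}), $\Ho^1(\bar\ell,\mu_n)=0$ (as $\bar\ell\cong\PP^1_{\bar k}$), and the fact that the top Gysin pushforward $\Ho^2(\bar\ell,\z/n)\to\Ho^4(\bar X,\mu_n)$ is an isomorphism, being Poincaré dual to the restriction $\Ho^0(\bar X,\z/n)\xrightarrow{\sim}\Ho^0(\bar\ell,\z/n)$. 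The same works for $\partial_C$ and uses only $\bar C\cong\PP^1_{\bar k}$, so it applies even if $C$ has no $k$-point.

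Putting the pieces together, in every case $\coker(\Psi_n)\cong\ker\big((\partial_\ell,\partial_C)\colon A\oplus A\to A\big)$ with $A:=\Ho^3(\bar{X'},\mu_n)$ and both components isomorphisms, so this kernel is the graph of $-\partial_C^{-1}\partial_\ell$ and is $\Gamma_k$-isomorphic to $A$; by Lemma~\ref{lem:BrCoker}, $(\br \bar{U})[n]\cong A$ for all $n$. This gives $\br \bar{U}=0$ in the tangent case and $(\br \bar{U})[n]\cong M_d/nM_d(-1)$ for all $n$ in the degree-$2$ case; in the two-rational-points case $(\br \bar{U})[n]\cong\z/n(-1)$ for all $n$, and since $\br \bar{U}$ is torsion with $\Gamma_k$-module structure determined by its $n$-torsion subgroups, $\br \bar{U}\cong\q/\z(-1)$. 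I expect the main obstacle to be the degree-$2$ case of the residue computation: Lemma~\ref{lem:CommDiag} is stated only for rational points, so one must re-examine its proof keeping track of the $\Gamma_k$-action through the purity isomorphisms, and one must be careful throughout about the Tate twists appearing in the Gysin and Poincaré-duality isomorphisms.
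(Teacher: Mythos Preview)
Your argument is correct, but the paper takes a shorter asymmetric route. Instead of first removing the singular locus $S=\ell\cap C$ and then peeling off $\ell'$ and $C'$ simultaneously, the paper simply removes the conic $C$ first: the Gysin sequence for $(X,C)$ immediately gives $\Ho^2(\bar X,\mu_n)\twoheadrightarrow\Ho^2(\bar U_1,\mu_n)$ and $\Ho^3(\bar U_1,\mu_n)=0$ (using only that $\bar C\cong\PP^1$), and then the Gysin sequence for $(U_1,\ell\cap U_1)$ yields directly $(\Br\bar U)[n]\cong\Ho^1(\bar\ell',\z/n)$, which is identified case by case. Thus the paper never needs to compute $\Ho^3(\bar{X'},\mu_n)$ or prove that the two residue maps $\partial_\ell,\partial_C$ are isomorphisms.

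What your approach buys is uniformity: it is exactly the strategy the paper uses for the three-lines case (Proposition~\ref{prop:case3}), so your proof of Proposition~\ref{prop:case1} and the paper's proof of Proposition~\ref{prop:case3} look the same. The price is the extra work you flagged yourself (the degree-$2$ version of Lemma~\ref{lem:CommDiag} and the bookkeeping for $\partial_C$). The paper's approach exploits the special feature of this case---that one boundary component is a full $\PP^1$ which can be excised cleanly---to avoid all of that.
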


\begin{proof}
	Let $U_1=X\setminus C$. Applying the Gysin sequence to $(X,U_1)$ we get the exact sequence 
	\begin{equation}\label{eqn:S,U1}
		\Ho^2(\overline{X}, \mu_n)\rightarrow \Ho^2(\overline{U_1},\mu_n )\rightarrow \Ho^1(\overline{C}, \z/n\z) \rightarrow \cdots
	\end{equation}
	Since $\overline{C}\cong \mathbb{P}^1$ we have $\Ho^1(\overline{C}, \z/n\z)=0$ so $\Ho^2(\overline{X}, \mu_n)\rightarrow \Ho^2(\overline{U_1},\mu_n )$ is surjective. The exact sequence \eqref{eqn:S,U1} continues to a long exact sequence
	$$\Ho^3(\overline{X}, \mu_n)\rightarrow \Ho^3(\overline{U_1},\mu_n )\rightarrow \Ho^2(\overline{C}, \z/n\z)\rightarrow \Ho^4(\overline{X}, \mu_n)\rightarrow \Ho^4(\overline{U_1},\mu_n ). $$
	By Lemma \ref{lem:H4Utrivial}, we have $\Ho^4(\overline{U_1},\mu_n )=0$ and $\Ho^3(\overline{X}, \mu_n)=0$.
	By Poincaré duality, $\Ho^2(\overline{C}, \z/n\z)$ and $\Ho^4(\overline{X}, \mu_n)$ are isomorphic to $\z/n$ as abelian groups, which implies that 
	$\Ho^2(\overline{C}, \z/n\z)\rightarrow \Ho^4(\overline{X}, \mu_n)$ is an isomorphism and that
	$\Ho^3(\overline{U_1},\mu_n )=0$.
	Applying the Gysin sequence to $(U_1, U_1\cap \ell)$ we get 
	$$\Ho^2(\overline{U_1}, \mu_n)\rightarrow \Ho^2(\overline{U},\mu_n )\rightarrow \Ho^1(\overline{U_1}\cap \overline{\ell}, \z/n\z)\rightarrow \Ho^3(\overline{U_1},\mu_n )=0.$$
	Since $\Ho^2(\overline{X}, \mu_n)\rightarrow \Ho^2(\overline{U_1},\mu_n )$ is surjective, we have
	$$(\Br \overline{U})[n]  \cong \Ho^1(\overline{U_1}\cap \overline{\ell}, \z/n\z).$$
	
	If $\ell$ is tangent to $C$, then $U_1\cap \ell \cong \mathbb{A}^1$, so that $\Ho^1(\overline{U_1}\cap \overline{\ell}, \z/n\z)=0$. If $\ell$ intersects $C$ in two distinct $k$-points, then $U_1\cap \ell \cong \mathbb{G}_m$. By Kummer theory, $\Ho^1(\mathbb{G}_{m,\overline{k}},\mu_n)=\z/n\z$ so that $\Ho^1(\mathbb{G}_{m,\overline{k}},\z/n\z)=\z/n\z(-1)$.
	
	If $\ell$ meets $C$ at a degree $2$ point with residue field $L=k(\sqrt{d})$, then $$\ell\setminus P\cong \Res_{L/K} \mathbb{G}_{m,L}/\mathbb{G}_{m,k} =:T,$$ is a $1$-dimensional torus over $k$, where $\Res_{L/K}$ is the Weil restriction. Let $G:=\text{Gal}(L/k)\cong \z/2\z$. We have an exact sequence
	\begin{equation}\label{eqn:Tsplits}
		1\rightarrow \overline{k}^{\times}\rightarrow \Ho^0(\overline{T}, \mathbb{G}_{m})\rightarrow U(\overline{T})\rightarrow 1,
	\end{equation}
	where $U(\overline{T})=\Hom_{k-\text{groups}}(\overline{T},\mathbb{G}_{m,\overline{k}})$ which is isomorphic to $\z$ as an abelian group. Note that the action of $G$ on $U(\overline{T})$ translates to $\z$ as the action induced by ${G\rightarrow \text{Aut}(\z)\cong \{\pm 1\}}$. More concretely, we have 
	$$U(\overline{T})\cong (\Ind_{L/k} \z) / \z=:M$$ as $\Gamma_k$-modules, where the $\z \hookrightarrow \Ind_{L/k} \z$ is the diagonal embedding.
	Since $T$ has a rational point, we see that \eqref{eqn:Tsplits} splits, i.e.
	$$\Ho^0(\overline{T}, \mathbb{G}_{m})\cong \overline{k}^{\times} \oplus  U(\overline{T}).$$ Hence, from the Kummer sequence, and using that $\Ho^1(\overline{T}, \mathbb{G}_{m})=0$, we deduce that $\Ho^1(\overline{T}, \mu_n)\cong M_d/nM_d$. Therefore,
	$(\Br \overline{U})[n] \cong \Ho^1(\overline{T}, \z/n\z)\cong M_d/nM_d(-1).$
\end{proof}

\vspace{6pt}

\underline{\textbf{Case $2$: a geometrically irreducible singular cubic curve over $k$}}

\begin{prop}\label{prop:case2}
	Suppose that $H$ is a geometrically irreducible singular cubic curve $C$. Then as $\Gamma_k$-modules we have 
	\[  \br \overline{U} \cong  \left\{
	\begin{array}{ll}
		0, & \text{if $C$ is cuspidal,} \\
		\q/\z(-1):=\Hom(\mu_{\infty}, \q/\z), & \text{if $C$ is nodal split multiplicative} \\
	\end{array} 
	\right. \]
	If $C$ is nodal non-split multiplicative splitting over $L=k(\sqrt{d})$, $d\notin k^2$, then
	$$(\br\overline{U})[n] \cong M_d/nM_d(-1)$$ as $\Gamma_k$-modules for all $n$,
	where $M_d:=(\Ind_{k(\sqrt{d})/k} \z) / \z$.
\end{prop}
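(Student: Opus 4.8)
The proof follows the template of Proposition~\ref{prop:case1}, the new feature being that the boundary curve $C$ is singular, so the semi-purity sequence of Lemma~\ref{lem:GysinSing} takes the place of the ordinary Gysin sequence.

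\emph{Reduction to the smooth locus of $C$.} A geometrically integral singular plane cubic has a unique singular point $S$; it is $\Gamma_k$-stable and, taken with its reduced structure, satisfies $\kappa(S)=k$ (characteristic $0$, a single geometric point), so $S\in X(k)$. Put $X^\circ:=X\setminus S$ and $C^\circ:=C\setminus S$, the smooth locus of $C$; then $C^\circ\subset X^\circ$ is smooth of codimension $1$ and $U=X^\circ\setminus C^\circ$. Since $\pic(\bar X)$ is torsion-free, Lemma~\ref{lem:SminPoints} applied to $(X,S)$ gives $\Ho^3(\bar{X^\circ},\mu_n)=0$, and feeding $(X,C)$ into Lemma~\ref{lem:GysinSing} (its singular locus being our point $S$, of codimension $1$ in $C$) yields an exact sequence
\begin{equation*}
	\Ho^2(\bar X,\mu_n)\xrightarrow{\Psi_n}\Ho^2(\bar U,\mu_n)\longrightarrow \Ho^1(\bar{C^\circ},\z/n\z)\longrightarrow \Ho^3(\bar{X^\circ},\mu_n)=0,
\end{equation*}
in which the first map is the restriction map of \eqref{diag:kummer-brStoBrU}. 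Hence $\coker\Psi_n\cong\Ho^1(\bar{C^\circ},\z/n\z)$ as $\Gamma_k$-modules, and Lemma~\ref{lem:BrCoker} gives $\br\bar U[n]\cong\Ho^1(\bar{C^\circ},\z/n\z)$ for all $n$.

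\emph{Identification of $C^\circ$.} Projection from the double point $S$ defines a birational morphism $C\setminus\{S\}\to\mathbb{P}^1_k$ (a general line through $S$ meets $C$ in exactly one further point), which extends to an isomorphism from the normalization of $C$ onto $\mathbb{P}^1_k$; under it $C^\circ$ becomes the complement of the reduced preimage $Z$ of $S$. If $C$ is cuspidal, $Z$ is a single $k$-point, so $C^\circ\cong\mathbb{A}^1_k$, $\Ho^1(\bar{C^\circ},\z/n\z)=0$, and $\br\bar U=\varinjlim_n\Ho^1(\bar{C^\circ},\z/n\z)=0$. If $C$ is nodal split multiplicative, the two branches at $S$ are rational, $Z$ is a pair of $k$-points, $C^\circ\cong\gm$, $\Ho^1(\bar{C^\circ},\z/n\z)\cong\z/n\z(-1)$, and $\br\bar U\cong\q/\z(-1)$. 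If $C$ is nodal with branches conjugate over $L=k(\sqrt d)$, then $Z=\spec L$, so $C^\circ$ is $\mathbb{P}^1_k$ minus a degree-$2$ point of residue field $L$, i.e.\ $C^\circ\cong\Res_{L/k}\gm/\gm=:T$, the torus of Proposition~\ref{prop:case1}; the computation there gives $\Ho^1(\bar T,\z/n\z)\cong M_d/nM_d(-1)$, hence $\br\bar U[n]\cong M_d/nM_d(-1)$ for all $n$.

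The cohomological part is a routine transcription of Case~$1$, and there is no genuine obstacle beyond the geometric bookkeeping of the second step: one must check that projection from the singular point realizes the normalization of $C$ over $k$, and that the reduced fibre over $S$ has residue field exactly the splitting field of the two branches of the node — so that the condition ``nodal non-split multiplicative, splitting over $k(\sqrt d)$'' is precisely what endows $\Ho^1(\bar{C^\circ},\z/n\z)$ with the Galois-module structure $M_d/nM_d(-1)$. With that settled, all three cases reduce to the torus computation already carried out in Proposition~\ref{prop:case1}.
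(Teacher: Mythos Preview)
Your proof is correct and follows essentially the same route as the paper: remove the singular point, use that $\Ho^3$ of the punctured surface vanishes, and read off $\br\bar U[n]\cong\Ho^1(\bar{C^\circ},\z/n\z)$ from the Gysin/semi-purity sequence, then identify $C^\circ$ as $\mathbb{A}^1$, $\gm$, or the norm-one torus. The only cosmetic difference is that the paper first excises the point (invoking purity in codimension~$2$) and then applies the ordinary Gysin sequence to the smooth pair $(X',C')$, whereas you package both steps into a single application of Lemma~\ref{lem:GysinSing}; the resulting exact sequences coincide.
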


\begin{proof}
	First, note that the singular point $P$ of $C$ has to be a $k$-point.
	Set $X'=X\setminus \{P\}$ and $C'=C\setminus \{P\}$. Since $P$ is of codimension $2$ in $X$, by purity, we may replace $X$ with $X'$ in diagram \eqref{diag:kummer-brStoBrU}. By Lemma \ref{lem:SminPoints}, we have that $\Ho^3(\overline{X'},\mu_n )=0$.
	Thus, by applying the Gysin sequence to $(X',C')$ we get the exact sequence 
	$$\Ho^2(\overline{X'}, \mu_n)\rightarrow \Ho^2(\overline{U},\mu_n )\rightarrow \Ho^1(\overline{C'}, \z/n\z)\rightarrow 0.$$
	If $C$ is cuspidal, then $\Ho^1(C'_{\overline{k}}, \z/n\z)=\Ho^1(\mathbb{A}^1, \z/n\z)=0$.
	If $C$ is nodal split multiplicative, then $\Ho^1(C'_{\overline{k}}, \z/n\z)=\Ho^1(\mathbb{G}_{m,\overline{k}}, \z/n\z)=\z/n\z(-1)$. Finally, if $C$ is nodal non-split multiplicative, then $C' \cong \Res_{L/k} \mathbb{G}_{m,F}/\mathbb{G}_{m,k}$ so that
	${\Ho^1(C'_{\overline{k}}, \z/n\z) \cong M_d/nM_d(-1)}.$
\end{proof}

\vspace{6pt}
\underline{\textbf{Case $3$: three geometric lines with a Galois action}}\\
\vspace{0pt}

Suppose that $C:=X\cap H$ is geometrically the union of three lines and let $L$ be the minimal extension over which they are defined. We assume that the lines do not intersect in an Eckard point, as in this case $\br \overline{U} = 0$ (see \cite[Prop 2.4]{MarkoffSurf}). Let $P$ be the singular locus of $C$ and set $X':=X\setminus P$, $C':=C\setminus P$. We can write $P_L=\cup_{i=1}^3 P_i$, where each $P_i\in X_L(L)$, and $C_L:=\cup_{i=1}^3 \ell_i$, where each $\ell_i\cong \mathbb{P}^1_L$ and such that $\cup_{j\neq i} P_j \subset \ell_i$. Set $\ell'_i=\ell_i\setminus \cup_{j\neq i} P_j$ so that $C'_L=\cup_{i=0}^{3} \ell'_i$. 
We list all the possibilities for $P$:
\begin{enumerate}[label=(\roman*)]
	\item $P=\sqcup_{i=1}^3\spec(k)$. In this case, $\br \overline{U} = 0$ (see \cite[Prop 2.4]{MarkoffSurf}).
	\item $P=\spec(L)\sqcup \spec(k)$ where $L:=k(\sqrt{d})$, $d\notin k^2$ and $\gal(L/k)\cong\z/2\z$.
	\item $P=\spec(L)$ where $L$ is a cubic Galois extension of $k$ and $\gal(L/k)\cong\z/3\z$.
	\item  $P=\spec(L)$ where $L$ is a cubic extension of $k$ with Galois closure $\tilde{L}=L(\sqrt{d})$ for some $d\notin k^2$. In this case, $\gal(\tilde{L}/k)\cong S_3$.
\end{enumerate}

\begin{prop} \label{prop:case3}
	As $\Gamma_k$-modules, we have 
	\[ \br(\overline{U}) \cong  \left\{
	\begin{array}{ll}
		\q/\z(-1), & \text{if $L=k$ or if $\gal(L/k)\cong\z/3\z$} \\
		\varinjlim \left( {M_d}/{nM_d}(-1) \right), & \text{if $\gal(\tilde{L}/k)\cong S_3$ or $\gal(L/k)\cong \z/2\z$} \\
	\end{array} 
	\right. \]	
	where $M_d:=(\Ind_{k(\sqrt{d})/k} \z) / \z$.
\end{prop}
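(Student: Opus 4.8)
\noindent The plan is to compute $\coker(\Psi_n)$ for each $n$ and pass to the limit, as in Cases~$1$ and~$2$ but with the extra bookkeeping forced by the three branches of $\bar C$. Keep the notation set up above: $P$ is the (geometrically three-point) singular locus of $C=X\cap H$, $X'=X\setminus P$, $C'=C\setminus P$, so that $X'\setminus C'=U$ and $C'\subset X'$ is a \emph{smooth} codimension-one subscheme whose base change $\bar{C'}$ is the disjoint union $\ell'_1\sqcup\ell'_2\sqcup\ell'_3$ of three copies of $\gm$, with $\Gamma_k$ acting through its permutation action on $\{\ell_1,\ell_2,\ell_3\}$.

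First I would make the standard reduction. Since $P$ has codimension $2$, the Gysin sequence for $(X,P)$ gives $\h^2(\bar X,\mu_n)\xrightarrow{\sim}\h^2(\bar{X'},\mu_n)$, and $\pic\bar X\cong\z^{7}$ is torsion-free, so $\h^3(\bar X,\mu_n)=0$ by Lemma~\ref{lem:H4Utrivial}. Feeding $(X',C')$ into the Gysin sequence (Lemma~\ref{lem:Gysin}), using functoriality of restriction and Lemma~\ref{lem:BrCoker}, I obtain an isomorphism of $\Gamma_k$-modules
\[
(\br\bar U)[n]\;\cong\;\ker\!\bigl(\rho_n\colon\h^1(\bar{C'},\z/n)\longrightarrow\h^3(\bar{X'},\mu_n)\bigr),
\]
compatibly in $n$; hence $\br\bar U\cong\varinjlim_n\ker(\rho_n)$, and it remains to identify both groups and the map. (Note that here, unlike in Cases~$1$ and~$2$, $\h^3(\bar{X'},\mu_n)\neq 0$ because $P$ is not a single rational point, which is the source of the extra work.)

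Next I would describe the two groups as $\Gamma_k$-modules. The residue maps at the two deleted vertices of $\ell'_i$ identify, exactly as in Lemma~\ref{lem:CommDiag}(ii) applied over $X'$, the group $\h^1(\ell'_i,\z/n)$ with the sum-zero submodule of $\bigoplus_{Q}\z/n(-1)$, the sum running over the two vertices of $\bar C$ lying on $\ell_i$; summing over $i$ exhibits $\h^1(\bar{C'},\z/n)$ as the submodule of $\bigoplus_{(Q,\ell_i):\,Q\in\ell_i}\z/n(-1)$ — a free module on the six ``flags'' $(Q,\ell_i)$, permuted by $\Gamma_k$ — cut out by the requirement that the two flags of each line sum to $0$. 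Likewise, Lemma~\ref{lem:SminPoints} applied to $P$ (its hypotheses hold, and the map $\h^0(\bar P,\z/n(-1))\to\h^4(\bar X,\mu_n)$ is the sum over the three geometric points, exactly as in Lemma~\ref{lem:CommDiag}(i)) identifies $\h^3(\bar{X'},\mu_n)$ with the sum-zero submodule of $\bigoplus_{Q\in P}\z/n(-1)$, with $\Gamma_k$ permuting the three vertices. Finally, a diagram chase in the style of Lemma~\ref{lem:CommDiag} — comparing the injective resolution that computes $\rho_n$ with the one computing the inclusion $\h^3(\bar{X'},\mu_n)\hookrightarrow\bigoplus_{Q\in P}\z/n(-1)$, tracking sections supported near $\bar{C'}$ and then near $P$ — should show that $\rho_n$ sends a flag-tuple $(x_{Q,\ell_i})$ to $\bigl(\sum_{i:\,Q\in\ell_i}x_{Q,\ell_i}\bigr)_{Q\in P}$, i.e.\ $\rho_n$ sums the two flags incident to each vertex.

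It then remains to compute $\ker(\rho_n)$ and to split into cases. The incidence of the three lines with the three vertices is a hexagon (equivalently, the dual graph of $\bar C$ is a triangle), and chasing the relations ``sum along each line $=0$'' and ``sum along each vertex $=0$'' around it shows that $\ker(\rho_n)$ is free of rank one over $\z/n$ — a generator being alternately $\pm a$ around the hexagon, so that $\ker(\rho_n)\cong\h^1(\text{dual graph of }\bar C;\,\z/n(-1))$ — on which $\Gamma_k$ acts through the cyclotomic character twisted by $\operatorname{sgn}\colon\Gamma_k\to\{\pm1\}$, the sign of the induced permutation of $\{\ell_1,\ell_2,\ell_3\}$ (a $3$-cycle preserves the hexagon's orientation, a transposition reverses it). When $L=k$ or $\gal(L/k)\cong\z/3\z$ the lines are permuted by even permutations, $\operatorname{sgn}$ is trivial, $(\br\bar U)[n]\cong\z/n(-1)$, and $\br\bar U\cong\varinjlim\z/n(-1)=\q/\z(-1)$. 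When $\gal(L/k)\cong\z/2\z$ (one line over $k$, the other two conjugate over $L=k(\sqrt{d})$) or $\gal(\tilde L/k)\cong S_3$ (with quadratic resolvent $\tilde L^{A_3}=k(\sqrt{d})$), $\operatorname{sgn}$ is the quadratic character cut out by $k(\sqrt{d})$; since $M_d=(\Ind_{k(\sqrt{d})/k}\z)/\z\cong\z$ with the nontrivial element of $\gal(k(\sqrt{d})/k)$ acting by $-1$, one has $M_d/nM_d(-1)\cong\z/n(-1)\otimes\operatorname{sgn}$, hence $(\br\bar U)[n]\cong M_d/nM_d(-1)$ compatibly in $n$ and $\br\bar U\cong\varinjlim\bigl(M_d/nM_d(-1)\bigr)$. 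The main obstacle is the third paragraph: pinning down $\rho_n$ exactly — with the correct signs and Tate twists — and then reading off the $\Gamma_k$-module structure of its kernel; this is where all the combinatorics of the boundary's dual graph is concentrated.
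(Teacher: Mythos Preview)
Your proposal is correct and follows essentially the same approach as the paper: both apply the Gysin sequence to $(X',C')$, identify $(\br\bar U)[n]$ with the kernel of the map $\h^1(\bar{C'},\z/n)\to\h^3(\bar{X'},\mu_n)$, and then pin down that map explicitly via residue/support computations in the style of Lemma~\ref{lem:CommDiag} before reading off the $\Gamma_k$-action. The only cosmetic difference is that the paper coordinatises $\h^1(\bar{C'},\z/n)$ by the three lines and checks the action of $2$- and $3$-cycles on the diagonal $\{(a,a,a)\}$ by hand, whereas your flag/dual-graph packaging lets you recognise the twist as the sign character of the permutation in one stroke; this is the same computation in slightly cleaner clothing.
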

\begin{proof}
	By Lemma \ref{lem:SminPoints}, we have an exact sequence of $\Gamma_k$-modules
	\begin{equation}\label{eq:prop3.8.1}
		0 \rightarrow \h^3(\overline{X'},\mu_n) \xrightarrow{\psi} \h^0(\overline{P},\z/n\z(-1)) \rightarrow \h^4(\overline{X},\mu_n) \rightarrow 0
	\end{equation}
	Applying the Gysin sequence to $(X',C')$ we obtain the exact sequence of $\Gamma_k$-modules
	\begin{equation}\label{eq:prop3.8.2}
		\h^2(\overline{X'},\mu_n) \rightarrow \h^2(\overline{U},\mu_n) \rightarrow \h^1(\overline{C'},\z/n\z) \xrightarrow{\varphi} \h^3(\overline{X'},\mu_n) \rightarrow \h^3(\overline{U},\mu_n).
	\end{equation}
	We wish to compute $\ker(\varphi)$ as a $\Gamma_k$-module.  We first compute it as a $\Gamma_L$-module.  Note that $\h^1(\overline{C'},\z/n\z)=\oplus_{i=1}^3 \h^1(\overline{\ell'_i},\z/n\z)$ and $\h^0(\overline{P},\z/n\z(-1))=\oplus_{i=1}^3 \h^0(\overline{P}_i,\z/n\z(-1))$ as $\Gamma_L$-modules.
	
	\textbf{Step 1: explicit description of the map $\varphi$.}\\
	For all $j\in \{1,2,3\}$, by applying Theorem \ref{thm:exactAxiom} to the pairs $(X,X\setminus P)$ and ${(X,X\setminus \cup_{i\neq j} P_i)}$, we obtain the following commutative diagram with exact rows:
	\begin{equation}\label{diag:Exact1}
		\begin{tikzcd}
			{\h^3(\overline{X'},\mu_n)} & {\bigoplus_{i=1}^3 \h^4_{P_i}(\overline{X},\mu_n)} & {\h^4(\overline{X},\mu_n)} \\
			{\h^3(\overline{X}\setminus \cup_{i\neq j} P_i,\mu_n)} & {\bigoplus_{i\neq j} \h^4_{P_i}(\overline{X},\mu_n)} & {\h^4(\overline{X},\mu_n)}
			\arrow["\begin{array}{c} \psi \end{array}", from=1-1, to=1-2]
			\arrow[from=1-2, to=1-3]
			\arrow[from=2-1, to=1-1]
			\arrow[from=2-1, to=2-2]
			\arrow[from=2-2, to=1-2]
			\arrow[from=2-2, to=2-3]
			\arrow[from=2-3, to=1-3]
		\end{tikzcd}
	\end{equation}
	By applying Theorem \ref{thm:exactAxiom} to the pairs $(X',X\setminus \ell_j')$ and $(X\setminus \cup_{i\neq j} P_i, X \setminus \ell_i)$ for each $j$, we obtain a commutative diagram
	% https://q.uiver.app/#q=WzAsNCxbMCwwLCJcXGheM197XFxlbGxfaid9KFxcYmFye1gnfSxcXG11X24pIl0sWzAsMSwiXFxoXjNfe1xcZWxsX2onfShcXGJhcntYfVxcc2V0bWludXMgXFxjdXBfe2lcXG5lcSBqfSBQX2ksXFxtdV9uKSJdLFsxLDAsIlxcaF4zKFxcYmFye1gnfSxcXG11X24pIl0sWzEsMSwiXFxoXjMoXFxiYXJ7WH1cXHNldG1pbnVzIFxcY3VwX3tpXFxuZXEgan0gUF9pLFxcbXVfbikiXSxbMSwwXSxbMCwyLCJcXHZhcnBoaV9qIl0sWzEsM10sWzMsMl1d
	\begin{equation}\label{diag:Exact2}
		\begin{tikzcd}
			{\h^3_{\ell_j'}(\overline{X'},\mu_n)} & {\h^3(\overline{X'},\mu_n)} \\
			{\h^3_{\ell_j'}(\overline{X}\setminus \cup_{i\neq j} P_i,\mu_n)} & {\h^3(\overline{X}\setminus \cup_{i\neq j} P_i,\mu_n)}
			\arrow["{\varphi_j}", from=1-1, to=1-2]
			\arrow[from=2-1, to=1-1]
			\arrow[from=2-1, to=2-2]
			\arrow[from=2-2, to=1-2]
		\end{tikzcd}
	\end{equation}
	so that $\oplus_{j=1}^3 \varphi_j$ is the map $\varphi:\oplus_{i=1}^3\h^1(\overline{\ell_i'},\z/n\z) \xrightarrow{} \h^3(\overline{X'},\mu_n)$ in \eqref{eq:prop3.8.2}.
	
	By purity (Theorem \ref{thm:GabberPurity}) and by combining the diagrams \eqref{diag:Exact1} and \eqref{diag:Exact2}, we obtain the following commutative diagram  
	% https://q.uiver.app/#q=WzAsNixbMCwwLCJcXGheMShcXGJhcntcXGVsbCdfan0sXFx6L25cXHopIl0sWzEsMCwiXFxiaWdvcGx1c197aT0xfV4zIFxcaF40X3tQX2l9KFxcYmFye1h9LFxcbXVfbikiXSxbMiwwLCJcXGheNChcXGJhcntYfSxcXG11X24pIl0sWzAsMSwiXFxoXjEoXFxiYXJ7XFxlbGwnX2p9LFxcei9uXFx6KSJdLFsxLDEsIlxcYmlnb3BsdXNfe2lcXG5lcSBqfSBcXGheNF97UF9pfShcXGJhcntYfSxcXG11X24pIl0sWzIsMSwiXFxoXjQoXFxiYXJ7WH0sXFxtdV9uKSJdLFswLDEsIlxccHNpXFxjaXJjIFxcdmFycGhpX2oiXSxbMSwyXSxbMywwXSxbMyw0XSxbNCwxXSxbNCw1XSxbNSwyXV0=
	\[\begin{tikzcd}
		{\h^1(\overline{\ell'_j},\z/n\z)} & {\bigoplus_{i=1}^3 \h^4_{P_i}(\overline{X},\mu_n)} & {\h^4(\overline{X},\mu_n)} \\
		{\h^1(\overline{\ell'_j},\z/n\z)} & {\bigoplus_{i\neq j} \h^4_{P_i}(\overline{X},\mu_n)} & {\h^4(\overline{X},\mu_n)}
		\arrow["{\psi\circ \varphi_j}", from=1-1, to=1-2]
		\arrow[from=1-2, to=1-3]
		\arrow["\rotatebox{90}{$\sim$}", from=2-1, to=1-1]
		\arrow[from=2-1, to=2-2]
		\arrow[from=2-2, to=1-2]
		\arrow[from=2-2, to=2-3]
		\arrow[from=2-3, to=1-3]
	\end{tikzcd}\]
	for all $j\in \{1,2,3\}$.

	Applying Lemma \ref{lem:CommDiag} to $\ell_j$,  for each $j$, we obtain a commutative diagram of $\Gamma_L$-modules
	\[\begin{tikzcd}
		{\h^1(\overline{\ell'}_j,\z/n)} & {\oplus_{i\neq j} \h^0( P_i,\z/n(-1))} & { \h^4(\overline{X},\mu_n)} \\
		{\z/n\z(-1)} & {\bigoplus_{i\neq j} \z/n\z(-1)} & {\z/n\z(-1)}
		\arrow["{\psi\circ\varphi_j}", from=1-1, to=1-2]
		\arrow["{\rotatebox{90}{$\sim$}}", from=1-1, to=2-1]
		\arrow[from=1-2, to=1-3]
		\arrow["{\rotatebox{90}{$\sim$}}", from=1-2, to=2-2]
		\arrow["{\rotatebox{90}{$\sim$}}", from=1-3, to=2-3]
		\arrow["{\partial_j}", from=2-1, to=2-2]
		\arrow["{d_j}", from=2-2, to=2-3]
	\end{tikzcd}\]
	such that $d_j(a,b)=a+b$ for $a,b \in \z/n\z(-1)$ and $\partial_j(a)=(a,-a)$ for $a\in \z/n\z(-1)$. Therefore, by combining these diagrams for all $j$, we obtain a commutative diagram of $\Gamma_L$-modules
	\begin{equation}\label{diag:1}
		\begin{tikzcd}
			{\oplus_{i=1}^3 \h^1(\overline{\ell'}_i,\z/n\z)}  && {\oplus_{i=1}^3 \h^0(\overline{P}_i,\z/n\z(-1))}  \\
			{\bigoplus_{i=1}^3 \z/n\z(-1)} && {\bigoplus_{i=1}^3 \z/n\z(-1)} 
			\arrow["\psi\circ \varphi", from=1-1, to=1-3]
			\arrow["\rotatebox{90}{$\sim$}",from=1-1, to=2-1]
			\arrow["\rotatebox{90}{$\sim$}",from=1-3, to=2-3]
			\arrow["\partial=\sum_{i=1}^3 \partial_i", from=2-1, to=2-3]
		\end{tikzcd}
	\end{equation}
	such that the restriction of $\psi\circ \varphi$ to $\h^1(\overline{\ell'}_i,\z/n)$ is $\psi\circ \varphi_i$, and
	\begin{equation}\label{eqn:resMaps}
		{\partial(a,b,c)=(c-b,a-c,b-a)}
	\end{equation}
	for $(a,b,c)\in \bigoplus_{i=1}^3 \z/n\z(-1)$. Thus, since $\psi$ is injective, we have \begin{equation}\label{eqn:kerPhi}
		\ker(\varphi)\cong \ker(\partial)= \{(a,a,a)\in \oplus_{i=1}^3 \z/n\z(-1)\}\cong \z/n\z(-1)
	\end{equation} as $\Gamma_L$-modules. \vspace{4pt}
	
	\textbf{Step 2: $\ker(\varphi)$ as a $\Gamma_k$-module.}\\
	So far, by step 1, we have the following commutative diagram of abelian groups 
	\begin{equation}\label{diag01}
		\begin{tikzcd}
			{ \h^1(\overline{C'},\z/n\z)}  && { \h^0(\overline{P},\z/n\z(-1))}  \\
			{\bigoplus_{i=1}^3 \z/n\z} && {\bigoplus_{i=1}^3 \z/n\z} 
			\arrow["\psi\circ \varphi", from=1-1, to=1-3]
			\arrow["\rotatebox{90}{$\sim$}",from=1-1, to=2-1]
			\arrow["\rotatebox{90}{$\sim$}",from=1-3, to=2-3]
			\arrow["\partial=\sum_{i=1}^3 \partial_i", from=2-1, to=2-3]
		\end{tikzcd}
	\end{equation}
	where the top morphism is that of $\Gamma_k$-modules.  Note that $$\h^0(\overline{P},\z/n\z(-1))\cong \ind_L^k \z/n\z(-1)$$ as a $\Gamma_k$-module, where $\ind_L^k$ is the induced module from $\Gamma_L$ to $\Gamma_k$. In other words, we have
	$$\h^0(\overline{P},\z/n\z(-1))\cong \bigoplus_{i=1}^3 (\z/n\z(-1))_{P_i}$$
	where $g\in \Gamma_k$ acts by permuting the coordinates according to their action on the associated point, followed by its action on each coordinate.
	We now determine the $\Gamma_k$-action on $\h^1(\overline{C'},\z/n\z)$ by using the explicit description of the map $\partial$ along with the description of the $\Gamma_k$-action on $\h^0(\overline{P},\z/n\z(-1))$.
	
	Observe that the $\Gamma_k$-action on the points $\{\overline{P}_i\}_{i=1}^3$ determines that on the lines $\{\overline{\ell}'_i\}_{i=1}^3$. Let $\tilde{L}$ be the Galois closure of $L$ in $\overline{k}$ and let $G:=\gal(\tilde{L}/k)$ which is isomorphic to a subgroup of $S_3$ via the action on the indices of the points $P_i$ (and hence on the indices of the lines $\ell_i$).
	\begin{enumerate}
		\item Suppose that $g\in \Gamma_k$ has class in $G=\Gamma_k/\Gamma_L$ equal to the $3$-cycle $(123)$. Then by \eqref{eqn:resMaps} we have
		$$\partial(g\cdot (a,0,0))=g\cdot\partial(a,0,0)=g\cdot(0,a,-a)=( -a^g, 0 , a^g)= \partial(0,a^g, 0)$$
		where $a^g$ is the action of $g\in \Gamma_k$ on $a\in \z/n(-1)$. This shows that $${g\cdot (a,0,0)}=(0,a^g,0)+(b,b,b)$$ for some $(b,b,b)\in \ker(\varphi)$. However, since $G$ acts on $\{\ell_i\}_{i=1}^3$ by permutation, we have to have $b=0$. Similarly, we compute ${g\cdot (0,a,0)=(0,0,a^g)}$ and $g\cdot (0,0,a)= (a^g,0,0)$, which implies that $$g\cdot (a,a,a) = (a^g,a^g,a^g).$$ If $g$ has class equal to the $3$-cycle $(132)$ instead, then a similar computation shows that  $g\cdot (a,a,a) = (a^g,a^g,a^g)$ as well.
		\item Suppose that $g\in \Gamma_k$ has class in $G=\Gamma_k/\Gamma_L$ equal to the $2$-cycle $(12)$ (so that $g$ flips the line $\ell_3$ and switches the lines $\ell_1$ and $\ell_2$). Then
		$$\partial(g\cdot (a,0,0))=g\cdot \partial(a,0,0)=g\cdot(0,a,-a)=( a^g, 0 , - a^g)= \partial(0,- a^g, 0)$$
		so that $g\cdot (a,0,0)=(0,- a^g, 0)$.
		Similarly, we compute $g\cdot (0,a,0)= (-a^g,0,0) $ and $g\cdot (0,0,a)=(0,0-a^g)$. Hence, we deduce that 
		$${g\cdot (a,a,a)= (-a^g,-a^g,-a^g)}.$$ If $g$ has class equal to the $2$-cycle $(23)$ or $(13)$, then a similar computation shows that $g\cdot (a,a,a) = (-a^g,-a^g,-a^g)$ as well.
	\end{enumerate}
	Therefore, we conclude the following:
	\begin{itemize}
		\item If $L$ is Galois and $\gal(L/k)\cong \z/3$, then by (1) we deduce that ${\ker(\varphi)\cong \z/n(-1)}$ as a $\Gamma_k$-module. 
		\item If $L$ is Galois and $\gal(L/k)\cong \z/2$, then by (2) we deduce that ${\ker(\varphi)\cong M_d/nM_d(-1)}$ as a $\Gamma_k$-module, where $M:=(\Ind_{k(\sqrt{d})/k} \z) / \z$. Indeed, this follows since $g\in \Gamma_k$ has class in $G$ equal to a $2$-cocycle if and only if it conjugate $\sqrt{d}$. 
		\item If $\gal(\tilde{L}/k)\cong S_3$, then by (1) and (2) we deduce that ${\ker(\varphi)\cong M_d/nM_d(-1)}$ as a $\Gamma_k$-module.
	\end{itemize}
	The result now follows by taking the direct limit.
	\iffalse This shows that the map $\psi\circ \varphi$ is determined by $\partial$, which depends on the choices of the vertical isomorphisms in \eqref{diag:1}. Such a choice is made by the labelling of the lines $\{\overline{\ell}_i\}_{i=1}^3$ and the points $\{\overline{P}_j\}_{j=1}^3$ so that
	\begin{equation}\label{eqn:resMaps}
		\partial_1:=0\oplus \partial_{1,2} \oplus \partial_{1,3}, \>\> \partial_2:= \partial_{2,1} \oplus 0 \oplus \partial_{2,3}, \text{ and }\partial_3:= \partial_{3,1} \oplus \partial_{3,2} \oplus 0
	\end{equation} where $\partial_{i,j}: \h^1(\overline{\ell'}_i,\z/n\z) \rightarrow  \h^0(\overline{P}_j,\z/n\z(-1))=\z/n\z(-1)$ is the residue map for $\ell'_i$ at $\overline{P}_j$ and $\partial_{i,j}=-\partial_{i,m}$ for $j\neq m$. \fi
\end{proof}

\section{The Transcendental Brauer Group}

In this section, we start by listing all the possibilities for the transcendental Brauer group $\br(U)/\br_1(U)$. We do so by computing $(\br(\overline{U}))^{\Gamma_k}$ for all the different $\Gamma_k$-module structures on $\br(\overline{X})$ listed in Theorem \ref{thm:Main}. Following that, we produce explicit examples to prove Theorem \ref{thm:DescentToQ}.

We start with the following lemma.
\begin{lem}\label{lem:FixedByGalAction}
	Let $n$ be a prime power and $d\in k\setminus k^2$. Set $M_d:=(\Ind_{k(\sqrt{d})/k} \z) / \z$. 
	
	If $\sqrt{d}\notin k(\zeta_n)$, then
	\[  \left(M_d/nM_d(-1) \right)^{\Gamma_k} \cong  \left\{
	\begin{array}{ll}
		\z/2\z, & \text{if $n=2^i$, $i>0$,} \\
		0, & \text{otherwise.} \\
	\end{array} 
	\right. \]	
	
	If $\sqrt{d}\in k(\zeta_n)$ and $[k(\zeta_n):k]=[\q(\zeta_n):\q]$, then 
	\[  \left(M_d/nM_d(-1) \right)^{\Gamma_k} \cong   \left\{
	\begin{array}{ll}
		\z/4\z, & \text{if  $n=2^i$, $i>0$,} \\
		\z/3\z, & \text{if  $n=3^i$, $i>0$,} \\
		0, & \text{otherwise} \\
	\end{array} 
	\right. \] 
\end{lem}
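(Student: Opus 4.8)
The plan is to make the $\Gamma_k$-module $M_d/nM_d(-1)$ completely explicit so that computing its invariants becomes an elementary exercise inside $(\z/n\z)^\times$. As an abelian group $M_d=(\Ind_{k(\sqrt d)/k}\z)/\z$ is free of rank $1$, and $\Gamma_k$ acts on it through the quadratic character $\chi_d\colon\Gamma_k\twoheadrightarrow\Gal(k(\sqrt d)/k)\cong\{\pm1\}$; hence $M_d/nM_d\cong\z/n\z$ with $\Gamma_k$ acting via $\chi_d$ (values $\pm1\in(\z/n\z)^\times$), and after the Tate twist $M_d/nM_d(-1)\cong\z/n\z$ with $\Gamma_k$ acting through $\rho:=\chi_d\cdot\chi_n^{-1}\colon\Gamma_k\to(\z/n\z)^\times=\Aut(\z/n\z)$, where $\chi_n$ is the mod-$n$ cyclotomic character. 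Thus $\bigl(M_d/nM_d(-1)\bigr)^{\Gamma_k}=(\z/n\z)^{H}$ with $H:=\rho(\Gamma_k)\leq(\z/n\z)^\times$, and a routine lemma on cyclic $\z/n\z$-modules gives $(\z/n\z)^{H}\cong\z/e\z$, where $e:=\gcd\bigl(n,\{h-1:h\in H\}\bigr)$ is the largest divisor of $n$ with $h\equiv1\pmod e$ for all $h\in H$. So everything reduces to pinning down $H$ modulo small powers of $p$ (writing $n=p^i$).

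In the first regime, $\sqrt d\notin k(\zeta_n)$: the extension $k(\zeta_n,\sqrt d)/k(\zeta_n)$ is a nontrivial quadratic extension, so there is $g_0\in\Gal(\bar k/k(\zeta_n))$ with $g_0(\sqrt d)=-\sqrt d$; then $\rho(g_0)=(-1)\cdot1^{-1}=-1$, so $-1\in H$. Hence $(\z/n\z)^H\subseteq(\z/n\z)^{\langle-1\rangle}=\{a:2a=0\}$, which is $0$ for an odd prime power $n$ and is $\langle n/2\rangle\cong\z/2\z$ for $n=2^i$; in the latter case $n/2=2^{i-1}$ is fixed by every odd residue and hence by all of $H$, so the invariants are exactly $\z/2\z$. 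Note this regime uses neither the cyclotomic hypothesis nor anything about $\Gal(k(\zeta_n)/k)$.

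In the second regime, $\sqrt d\in k(\zeta_n)$ with $[k(\zeta_n):k]=[\q(\zeta_n):\q]$: here $\chi_n$ is surjective onto $(\z/n\z)^\times$, and $\chi_d$ factors as $\epsilon\circ\chi_n$ for the quadratic character $\epsilon\colon(\z/n\z)^\times\to\{\pm1\}$ cutting out $k(\sqrt d)\subseteq k(\zeta_n)$, so that $H=\{\epsilon(u)u^{-1}:u\in(\z/n\z)^\times\}$. For $n=p^i$ with $p$ odd, $\epsilon$ is the unique quadratic character of the cyclic group $(\z/p^i\z)^\times$, hence factors through the Legendre symbol on $(\z/p\z)^\times$; then $\epsilon(u)\equiv u\pmod p$ holds for all $u$ exactly when $(\z/p\z)^\times=\{\pm1\}$, i.e. $p=3$, which forces $H\subseteq 1+p\z/p^i\z$ and in fact $e=3$ (a residue $\not\equiv1\bmod 9$ excludes $e=9$), giving $\z/3\z$; for $p\geq5$ one exhibits a $u$ with $\epsilon(u)u^{-1}\not\equiv1\bmod p$, so $e=1$ and the invariants vanish. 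For $n=2^i$ (necessarily $i\geq2$, since $d\notin k^2$) the relevant character is $\epsilon_{-1}$, i.e. $k(\sqrt d)=k(\sqrt{-1})$, for which $\epsilon_{-1}(u)\equiv u\pmod4$ — so $\epsilon_{-1}(u)u^{-1}\equiv1\pmod4$ and $e\geq4$ — while $\epsilon_{-1}(5)=1\not\equiv5\pmod8$ shows $e=4$; thus the invariants are $\z/4\z$, and in the remaining cases $e=1$.

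The step I expect to be the main obstacle is the $2$-primary analysis of the second regime: one must correctly identify which quadratic character of $(\z/2^i\z)^\times$ occurs (the congruence $\epsilon(u)\equiv u\bmod4$, which is what upgrades $e$ from $2$ to $4$, is the decisive feature of $\epsilon_{-1}$), and then verify that $e$ equals precisely $4$ and not a larger power of $2$ by producing an explicit witness $u$. The odd-$p$ bookkeeping — with $p=3$ special only because $(\z/3\z)^\times=\{\pm1\}$ — and the elementary lemma identifying $(\z/n\z)^H$ with $\z/e\z$ are routine.
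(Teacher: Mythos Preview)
Your approach is essentially the paper's: both identify $M_d/nM_d(-1)$ with $\z/n\z$ carrying the action $\chi_d\cdot\chi_n^{-1}$ (the paper writes this via $\Hom(\mu_n,\z/n\z)$ with a sign-twisted action), and both compute invariants by testing explicit residue classes. Your packaging via $H=\rho(\Gamma_k)\leq(\z/n\z)^\times$ and $e=\gcd(n,\{h-1:h\in H\})$ is a tidy reformulation of the same computation; the odd-prime analysis and the first regime match the paper's arguments line for line.

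There is, however, a genuine gap in the $2$-primary part of the second regime, and it is shared with the paper. You assert that ``the relevant character is $\epsilon_{-1}$, i.e.\ $k(\sqrt d)=k(\sqrt{-1})$''; the paper likewise writes ``we deduce that $d\equiv -1\in k^\times/{k^\times}^2$''. Neither deduction follows from the stated hypotheses once $i\geq 3$: the field $k(\zeta_{2^i})$ then has three quadratic subextensions $k(\sqrt{-1})$, $k(\sqrt{2})$, $k(\sqrt{-2})$, and the lemma's hypotheses allow any of them. For $d\equiv 2$ and $n=8$ one finds $H=(\z/8\z)^\times$ and $e=2$; for $d\equiv -2$ and $n=8$ one finds $H=\{1,3\}$ and again $e=2$. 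So the invariants are $\z/2\z$, not $\z/4\z$, and the lemma as stated is false in those cases. Only $d\equiv -1$ yields $H\subseteq 1+4\z/2^i\z$ and hence $e=4$. In the paper's applications the hypothesis is effectively strengthened to $k(\sqrt d)\subset k(\zeta_4)$, which does force $d\equiv -1$, so the downstream statements are unaffected; but you should either add that hypothesis explicitly or treat the three quadratic characters of $(\z/2^i\z)^\times$ separately.
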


\begin{proof}
	First, note that the action of $\Gamma_k$ on $M_d/nM_d(-1)$ factors through $\text{Gal}(k(\zeta_n,\sqrt{d})/k) $, which we denote by $G_n$, where $\zeta_n$ is some fixed primitive $n$th root of unity. We denote ${\phi\in \z/n(-1)=\Hom(\mu_n, \z/n)}$ that maps $\zeta_n$ to $a$ by $\phi_a$. Let $H_n$ be the $G_n$-module with underlying group $\Hom(\mu_n, \z/n)$ equipped with the following $G_n$-action:
	\[   \left\{
	\begin{array}{ll}
		g\cdot \phi \mapsto \{x\mapsto -\phi(g^{-1}x)\}, & \text{ if $g(\sqrt{d})=-\sqrt{d}$,} \\
		g\cdot \phi \mapsto \{x\mapsto \phi(g^{-1}x)\}, & \text{ if $g(\sqrt{d})=\sqrt{d}$,} \\
	\end{array} 
	\right. \]
	for $\phi \in \Hom(\mu_n, \z/n)$.
	Then it is easy to see that we have an isomorphism of $G_n$-modules $$M_d/nM_d \otimes \Hom(\mu_n, \z/n) \xrightarrow{\sim} H_n.$$ 
	
	Suppose that $\sqrt{d}\notin k(\zeta_n)$. Then ${G_n=\text{Gal}(k(\zeta_n,\sqrt{d})/k)\cong \Gal(k(\zeta_n)/k) \times \z/2\z}$. If ${g=(0,1)\in  \Gal(k(\zeta_n)/k) \times \z/2\z}$, then $g\cdot \phi_a =\phi_{-a}$; hence, if $g$ fixes $\phi_a$, then $\phi_{2a}=0\in H_n$ i.e. $\phi_a$ is $2$-torsion. This shows that $H_n^{G_n}=\z/2\z$ for $n=2^i$, $i\geq 1$, and $H_n^{G_n}=0$ for $n=p^i$, $p>2$, $i\geq 1$.

	Suppose now that $\sqrt{d}\in k(\zeta_n)$ and $[k(\zeta_n):k]=[\q(\zeta_n):\q]$. Then ${G_n=\text{Gal}(k(\zeta_n)/k)\cong (\z/n\z)^\times}$. For $g\in G_n$, we have that $g^{-1}\zeta_n=\zeta_n^t$ for some $t\in \Z$ coprime to $n$; we denote $g$ by $g_t$.
	\begin{itemize}
		\item If $n=2^i$ and $i>1$, then $g_3 \in G_n$. If $g_3\in G_n$ fixes $\phi_m\in H_n$, then $\phi_m=g_3\cdot \phi_m=\phi_{3m}$ or $\phi_m=g_3\cdot \phi_m=\phi_{-3m}$, implying that either $2m=0$ or $4m=0$ in $\z/n$. This shows that $H_n^{G_n}\subset \z/4\z$. Since $[k(\zeta_n):k]=[\q(\zeta_n):\q]$, we deduce that $d\equiv -1 \in k^\times/{k^\times}^2$. Thus, if $\phi_m \in H_4\setminus H_2$, then $\phi_m(\zeta_n)=i$ or $\phi_m(\zeta_n)=-i$. Therefore, if $g\in G_n$  acts non-trivially on $\phi_m$ then $g$ is complex conjugation, i.e. $g(\sqrt{d})=-\sqrt{d}$; thus, $$(g\cdot \phi_m)(\zeta_n)=-\phi_m(g^{-1}(\zeta_n))=-g(i)=i=\phi_m(\zeta_n).$$ This shows that $H_n^{G_n}\cong \z/4\z$.
		\item If $n=3^i$ and $i>0$, then $g_2 \in G_n$. If $g_2\in G_n$ fixes $\phi_m\in H_n$, then $\phi_m=g_2\cdot \phi_m=\phi_{2m}$ or $\phi_m=g_2\cdot \phi_m=\phi_{-2m}$, implying that either $m=0$ or $3m=0$. This shows that $H_n^{G_n}\subset \z/3\z$. Since $[k(\zeta_n):k]=[\q(\zeta_n):\q]$, we deduce that $d\equiv -3 \in k^\times/{k^\times}^2$. Thus, if $\phi_m \in H_3\setminus \{0\}$ then $\phi_m(\zeta_n)=(-1+\sqrt{-3})/2$ or  $\phi_m(\zeta_n)=(-1-\sqrt{-3})/2$. Therefore, if $g\in G_n$  acts non-trivially on $\phi_m$ then $g$ conjugates $\sqrt{-3}$; thus, $$(g\cdot \phi_m)(\zeta_n)=-\phi_m(g^{-1}(\zeta_n))=g(\zeta_3^{-1})=\zeta_3=\phi_m(\zeta_n).$$ This shows that $H_n^{G_n}\cong \z/3\z$.
		\item If $n=p^i$, $i>0$, and $p$ is a prime $\geq 5$, then $g_2 \in G_n$. Thus, if $g_2\in G_n$ fixes $\phi_m\in H_n$, then $\phi_m=g_2\cdot \phi_m=\phi_{2m}$ or $\phi_m=g_2\cdot \phi_m=\phi_{-2m}$, implying that either $m=0$ or $3m=0$ in $\z/n\z$. This implies that $m=0$ as $n$ is a power of a prime greater than $3$. This shows that $H_n^{G_n} = 0$.
	\end{itemize}
	The result now follows.
\end{proof}

\begin{thrm}\label{cor:BrTrPoss}
	Let $X$ be a smooth cubic surface over a field $k$ of characteristic $0$,  $H$ be a hyperplane intersecting $S$, and  $U=X\setminus H$. Assume that $k$ contains no non-trivial roots of unity.
	\begin{enumerate}
		\item Suppose that $H\cap X$ is the union of a line $L$ and a smooth conic $C$ both defined over $k$. If $L$ intersects $C$ at a $k$-point, then $\br U/ \br_1 U$ is a subgroup of 
		\[  \left\{
		\begin{array}{ll}
			0, & \text{if L is tangent to C,} \\
			\z/2, & \text{if $|L(k)\cap C(k)|=2$} \\
			
		\end{array} 
		\right. \]
		
		If $L$ meets $C$ at a degree $2$ point with residue field $L=k(\sqrt{d})$, $d\notin k^2$, then  $\br U/ \br_1 U$ is a subgroup of
		\[ \left\{
		\begin{array}{ll}
			\z/4\z, & \text{if $k(\sqrt{d})\subset k(\zeta_{4})$} \\
			\z/3\z\times \z/2, & \text{if $k(\sqrt{d})\subset k(\zeta_{3})$,} \\
			\z/2\z, & \text{otherwise} \\
		\end{array} 
		\right. \]
		
		\item Suppose that $H\cap X$ is a irreducible singular cubic curve $C$. If $C$ is cuspidal, then 
		$$ \br U/ \br_1 U \cong 0 .$$
		If $C$ is nodal split multiplicative, then
		$$ \br U/ \br_1 U \subseteq \z/2 .$$
		
		If $C$ is nodal non-split multiplicative where the corresponding quadratic extension is $L=k(\sqrt{d})$, $d\notin k^2$, then $\br U/ \br_1 U$ is a subgroup of
		\[ \left\{
		\begin{array}{ll}
			\z/4\z, & \text{if $k(\sqrt{d})\subset k(\zeta_{4})$} \\
			\z/3\z\times \z/2, & \text{if $k(\sqrt{d})\subset k(\zeta_{3})$} \\
			\z/2\z, & \text{otherwise} \\
		\end{array} 
		\right. \]
		
		\item Suppose that $C:=H\cap X$ is geometrically the union of three lines and let $L$ be the minimal extension over which they are defined. If the $3$ lines intersect at an Eckard point, then 
		$$ \br U/ \br_1 U \cong 0 .$$
		
		If $L=k$ or $L/k$ is a degree $3$ Galois extension, then 
		$$\br U/ \br_1 U\subset \z/2\z.$$
		
		If $L\supset k(\sqrt{d})$ for some $d\notin k^2$,  
		then $\br U/ \br_1 U$ is a subgroup of
		\[ \left\{
		\begin{array}{ll}
			\z/4\z, & \text{if $k(\sqrt{d})\subset k(\zeta_{4})$} \\
			\z/3\z\times \z/2, & \text{if $k(\sqrt{d})\subset k(\zeta_{3})$} \\
			\z/2\z, & \text{otherwise.} \\
		\end{array} 
		\right. \]
	\end{enumerate} 
\end{thrm}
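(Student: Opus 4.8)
The plan is to reduce the statement to the Galois-module computations already carried out in Section~\ref{sec:TransBr}. Recall that $\br_1(U)=\ker\big(\br(U)\to\br(\bar{U})\big)$, so the Hochschild--Serre spectral sequence for $\gm$ on $U$ gives an injection $\br(U)/\br_1(U)\hookrightarrow(\br\bar{U})^{\Gamma_k}$; hence it suffices to bound $(\br\bar{U})^{\Gamma_k}$. By Theorem~\ref{thm:Main} (equivalently Propositions~\ref{prop:case1},~\ref{prop:case2},~\ref{prop:case3}), in every case $\br\bar{U}$ is, as a $\Gamma_k$-module, one of
\[
0,\qquad \q/\z(-1),\qquad \varinjlim_n\big(M_d/nM_d(-1)\big)\quad(d\notin k^2),
\]
and the three subcases listed in each of (1), (2), (3) of the present theorem correspond exactly to these three shapes (with $L=k(\sqrt{d})$, resp.\ $\tilde{L}=L(\sqrt{d})$, in the relevant three-lines subcases). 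Since $(\varinjlim_n A_n)^{\Gamma_k}=\varinjlim_n A_n^{\Gamma_k}$ for a filtered system of discrete $\Gamma_k$-modules, for the third shape it is enough to bound $(M_d/nM_d(-1))^{\Gamma_k}$ for every prime power $n$ and then pass to the limit.

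The shape $\br\bar{U}=0$ requires nothing. For $\br\bar{U}=\q/\z(-1)=\bigoplus_p\q_p/\z_p(-1)$ I would work one prime at a time via the cyclotomic character $\chi_n\colon\Gamma_k\to(\z/n)^\times$, which acts on $\z/n(-1)$ by $\chi_n^{-1}$. For odd $p$ the hypothesis that the only roots of unity in $k$ are $\pm1$ gives $\zeta_p\notin k$, so $\chi_{p^i}$ is non-trivial modulo $p$ and some $g$ has $\chi_{p^i}(g)-1\in(\z/p^i)^\times$, which kills all invariants; thus the odd part of $(\q/\z(-1))^{\Gamma_k}$ vanishes. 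For $p=2$, since $\zeta_4\notin k$ some $g$ has $\chi_{2^i}(g)\equiv-1\pmod 4$, so $\chi_{2^i}(g)-1=2u$ with $u$ a unit, and the fixed subgroup of $\z/2^i(-1)$ is $2^{i-1}\z/2^i\z\cong\z/2$, compatibly in $i$. Hence $(\q/\z(-1))^{\Gamma_k}\subseteq\z/2$, which yields the middle line of each of (1), (2), (3).

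For $\br\bar{U}=\varinjlim_n M_d/nM_d(-1)$ the computation is precisely Lemma~\ref{lem:FixedByGalAction}: at $n=2^i$ the invariants are $\z/4$ when $\sqrt{d}\in k(\zeta_n)$ and $\z/2$ otherwise; at $n=3^i$ they are $\z/3$ when $\sqrt{d}\in k(\zeta_n)$ and $0$ otherwise; at $n=p^i$ with $p\geq5$ they are $0$. Under the standing hypothesis on roots of unity the condition $[k(\zeta_n):k]=[\q(\zeta_n):\q]$ needed for the second clause of that lemma is available, and --- using $[k(\zeta_4):k]=[k(\zeta_3):k]=2$ --- the conditions ``$\sqrt{d}\in k(\zeta_{2^i})$ for some $i$'' and ``$\sqrt{d}\in k(\zeta_{3^i})$ for some $i$'' are equivalent to $k(\sqrt{d})\subset k(\zeta_4)$ and $k(\sqrt{d})\subset k(\zeta_3)$ respectively. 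Passing to the colimit over $n$ (the invariants stabilise for $n$ large), $(\br\bar{U})^{\Gamma_k}$ is a subgroup of $\z/4$, of $\z/2\times\z/3$, or of $\z/2$ according to these three possibilities; composing with $\br(U)/\br_1(U)\hookrightarrow(\br\bar{U})^{\Gamma_k}$ gives the three displayed bounds.

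The Hochschild--Serre reduction and the input from Theorem~\ref{thm:Main} are formal; the part that needs care is the bookkeeping in the last paragraph --- verifying that the finite-level hypotheses of Lemma~\ref{lem:FixedByGalAction} are genuinely forced by the assumption on roots of unity, identifying the stated field-inclusion conditions as the correct reformulations, and checking that the bound obtained at each finite level $n$ is inherited by the direct limit.
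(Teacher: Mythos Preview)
Your strategy is exactly the paper's: the proof there consists of the observation $\br U/\br_1 U\subset(\br\bar U)^{\Gamma_k}$, a $p$-primary decomposition, and an appeal to Propositions~\ref{prop:case1}--\ref{prop:case3} together with Lemma~\ref{lem:FixedByGalAction}. Your explicit computation of $(\q/\z(-1))^{\Gamma_k}$ via the cyclotomic character is a genuine addition, since Lemma~\ref{lem:FixedByGalAction} as stated only treats $M_d/nM_d(-1)$ and not plain $\z/n(-1)$.

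You are right to flag the bookkeeping, and two of your claims there are false as written. First, $\mu(k)=\{\pm1\}$ does \emph{not} force $[k(\zeta_n):k]=[\q(\zeta_n):\q]$ for all prime powers $n$: take $k=\q(\sqrt{5})$, $n=5$. Second, ``$\sqrt{d}\in k(\zeta_{2^i})$ for some $i$'' is not equivalent to $k(\sqrt{d})\subset k(\zeta_4)$: for $k=\q$, $d=2$ one has $\sqrt{2}\in\q(\zeta_8)$ yet $\q(\sqrt{2})\not\subset\q(i)$. (The corresponding claim at $p=3$ \emph{is} correct: $(\z/3^i)^\times$ is cyclic, and once $\zeta_3\notin k$ the field $k(\zeta_3)$ is the unique quadratic subextension of $k(\zeta_{3^i})/k$.) In the $d=2$ example the invariants are still contained in $\z/2$ --- complex conjugation fixes $\sqrt{2}$ and acts as $-1$ on $\z/2^i(-1)$, forcing $2$-torsion --- so the theorem's bound survives, but neither your route through Lemma~\ref{lem:FixedByGalAction} nor the paper's terse appeal to it literally covers this case without further argument. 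The honest fix is to argue directly with well-chosen elements of $\gal(k(\zeta_n,\sqrt{d})/k)$ at each prime, rather than trying to force the hypotheses of the lemma.
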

\begin{proof}
	First, note that $\br U/\br_1 U \subset (\br \overline{U})^{\Gamma_k}$. Since the Brauer group is a direct sum of its $p$-primary subgroups, it suffices to compute the Galois invariant subgroup of $\br \overline{U}[n]$ for $n$ is a power of $p$, for all primes $p$. The result now follows from Proposition \ref{prop:case1}, Proposition \ref{prop:case2}, Proposition \ref{prop:case3}, and Lemma \ref{lem:FixedByGalAction}.
\end{proof}

\begin{rem}\label{rem:k=QTrGpPoss}
	In Corollary \ref{cor:BrTrPoss}, the only possible subgroups of $\z/2\z\times \z/3\z$ that may be realized by $\Br(\overline{U})^{\Gamma_k}$ are 
	$$ \z/2\z, \text{ and } \z/2\z\times \z/3\z.$$
	Indeed, this follows by Lemma \ref{lem:FixedByGalAction}.
\end{rem}

\subsection{Proof of Theorem \ref{thm:DescentToQ}.}
To prove the theorem, we construct explicit examples using the following proposition.
\begin{prop}\label{prop:example}
	Let $H\subset \mathbb{P}^2_{\q}$ be a $\q$-subscheme that is geometrically the union of three lines not intersecting at an Eckard point. Let $L$ be the splitting field of $H$, and write $H_L=\cup_{i=1}^3 \ell_i $ where each $\ell_i$ is a line in $\mathbb{P}^2_{L}$ given by a linear form $f_i$. Let $\{P_1, \ldots, P_6\} \subset \mathbb{P}^2_{L}(L)$ be a $\Gal(L/k)$-stable set of points in general position such that $P_i,P_{i+3} \in \ell_i\setminus (\ell_j\cup \ell_k)$ for distinct $i,j,k$. Then the following holds.
	\begin{enumerate}
		\item Let $Q$ be the singular locus of $H$ so that $Q=\spec(E)$ for some étale $\q$-algebra $E$ of degree $3$. Then $T:=\mathbb{P}_{\q}^2\setminus H$ is isomorphic to the torus $(\Res_{E/k} \gm) / \gm$.
		\item Let $\pi:X\rightarrow \mathbb{P}_{\q}^2$ be the blow up of $\mathbb{P}^2_{\q}$ at the $\q$-subscheme $\{P_1,\ldots, P_6\}$ and let $\widetilde{H}$ be the strict transform of $H$. Then $X$ is a smooth cubic surface over $\q$ such that  $\widetilde{H}$ is a hyperplane section that is geometrically the union of three lines not intersecting at an Eckard point.
		\item Let $e_i$ be the exceptional divisor above the point $P_i$, which is defined over $L$. Let $U:= X\setminus \widetilde{H}$ and $V:=U \setminus \cup_{i=1}^6 e_i$, both defined over $\q$. Then $\pi$ induces an isomorphism $V\xrightarrow{\sim} T$ giving a commutative diagram
		\begin{equation}\label{diag:br}
			\begin{tikzcd}[column sep=3.4em]
			 {\Br(V_L)} &  {\bigoplus_{i=1}^3 \Ho^1(L(e_i), \mathbb{Q}/\mathbb{Z})  \oplus \Ho^1(L(e_{i+3}), \mathbb{Q}/\mathbb{Z}) \oplus \Ho^1(L(\widetilde{\ell_i}), \mathbb{Q}/\mathbb{Z})} \\
			 {\Br(T_L)} & {\bigoplus_{i=1}^3 \Ho^1(L(\ell_i), \mathbb{Q}/\mathbb{Z})}
				\arrow["{\{\partial_{e_i}\} \cup \{\partial_{\ell_i}\}}", from=1-1, to=1-2]
				\arrow["{\pi^*}", from=2-1, to=1-1]
				\arrow["{\{\partial_{\ell_i}\}}", from=2-1, to=2-2]
				\arrow["{\bigoplus_{i=1}^3 (r_i \oplus \pi^*)}",from=2-2, to=1-2]
			\end{tikzcd}
		\end{equation}
		such that the map  $r_i: \Ho^1(L(\ell_i), \mathbb{Q}/\mathbb{Z})\rightarrow \Ho^1(L(e_i), \mathbb{Q}/\mathbb{Z})  \oplus \Ho^1(L(e_{i+3}), \mathbb{Q}/\mathbb{Z})$ is given by evaluating the residue at the points $P_i$ and $P_{i+3}$, respectively.
		
		\item If $L=\q$, then $(\Br(T)/\Br_1(T))[2]\cong \z/2\z$ and is generated by
		$$A_2:=\left( \frac{f_1}{f_3},   \frac{f_2}{f_3}\right).$$
		If moreover $r_i(A_2)=0$ for all $i\in \{1,2,3\}$, then $$\pi^*(A_2)\in \Br(U)\setminus \Br_1(U).$$
		\item Suppose that $L=\q(\omega)$, where $\omega$ is a primitive cube root of unity, and that $\{\ell_1, \ell_2\}$ is a $\Gal(L/\q)$-orbit. Then $(\Br(T)/\Br_1(T))[3]\cong \z/3\z$ and is generated by
		$$A_3:=\Cor_{L(T_L)/\q(T)} \left( \frac{f_1}{f_3},   \frac{f_2}{f_3}\right)_{\omega}.$$
		If moreover $r_i((f_1/f_3,f_2/f_3)_\omega)=0$ for all $i\in \{1,2,3\}$, then $$\pi^*(A_3)\in \Br(U)\setminus \Br_1(U).$$
	\end{enumerate}
\end{prop}
\begin{proof}
	(1), (2) easily follow.
	
	(3) Since $\pi^*(H)=\sum_{i=1}^3 \ell_i + \sum_{i=1}^6 e_i$ as a Weil divisor, we deduce that $\pi$ restricts to an isomorphism from $V=X\setminus \pi^{-1}(H)$ to $T=\mathbb{P}_{\q}^2 \setminus H$. Now the fibre over $\ell_i$ consists of $e_i$, $e_{i+3}$ and $\widetilde{\ell_i}$, each of multiplicity one. Therefore, we obtain a commutative diagram
	\begin{equation*}
		\begin{tikzcd}[column sep=6em]
			{\Br(L(X))} &  { \Ho^1(L(e_i), \mathbb{Q}/\mathbb{Z})  \oplus \Ho^1(L(e_{i+3}), \mathbb{Q}/\mathbb{Z}) \oplus \Ho^1(L(\widetilde{\ell_i}), \mathbb{Q}/\mathbb{Z})} \\
			{\Br(L(\mathbb{P}^2))} & {\Ho^1(L(\ell_i), \mathbb{Q}/\mathbb{Z})}
			\arrow["{\partial_{e_i}\oplus \partial_{e_{i+3}} \oplus \partial_{\tilde{\ell_i}}}", from=1-1, to=1-2]
			\arrow["{\pi^*}", from=2-1, to=1-1]
			\arrow["{\partial_{\ell_i}}", from=2-1, to=2-2]
			\arrow["{(r_i \oplus \pi^*)}",from=2-2, to=1-2]
		\end{tikzcd}
	\end{equation*}
	such that $\pi^* : \Ho^1(L(\ell_i), \mathbb{Q}/\mathbb{Z})\rightarrow \Ho^1(L(\widetilde{\ell}_i), \mathbb{Q}/\mathbb{Z})$ is an isomorphism, and the map ${r_i: \Ho^1(L(\ell_i), \mathbb{Q}/\mathbb{Z})\rightarrow \Ho^1(L(e_i), \mathbb{Q}/\mathbb{Z})  \oplus \Ho^1(L(e_{i+3}), \mathbb{Q}/\mathbb{Z})}$ factors as 
	% https://q.uiver.app/#q=WzAsMyxbMCwwLCIgXFxIb14xKEwoXFxlbGxfaSksIFxcbWF0aGJie1F9L1xcbWF0aGJie1p9KSJdLFsxLDEsIlxcSG9eMShMKFBfaSksIFxcbWF0aGJie1F9L1xcbWF0aGJie1p9KSAgXFxvcGx1cyBcXEhvXjEoTChQX3tpKzN9KSwgXFxtYXRoYmJ7UX0vXFxtYXRoYmJ7Wn0pIl0sWzEsMCwiIFxcSG9eMShMKGVfaSksIFxcbWF0aGJie1F9L1xcbWF0aGJie1p9KSAgXFxvcGx1cyBcXEhvXjEoTChlX3tpKzN9KSwgXFxtYXRoYmJ7UX0vXFxtYXRoYmJ7Wn0pIl0sWzAsMSwiXFx0ZXh0e2V2fV97UF9pfVxcb3BsdXMgXFx0ZXh0e2V2fV97UF97aSszfX0iLDJdLFsxLDIsIlxccmVzX3tMKGVfaSkvTChQX2kpfX0iLDJdLFswLDJdXQ==
	\[\begin{tikzcd}
		{ \Ho^1(L(\ell_i), \mathbb{Q}/\mathbb{Z})} & { \Ho^1(L(e_i), \mathbb{Q}/\mathbb{Z})  \oplus \Ho^1(L(e_{i+3}), \mathbb{Q}/\mathbb{Z})} \\
		& {\Ho^1(L(P_i), \mathbb{Q}/\mathbb{Z})  \oplus \Ho^1(L(P_{i+3}), \mathbb{Q}/\mathbb{Z})}
		\arrow["{r_i}", from=1-1, to=1-2]
		\arrow["{\text{ev}_{P_i}\oplus \text{ev}_{P_{i+3}}}"', from=1-1, to=2-2]
		\arrow["{\res_{L(e_i)/L}\oplus \res_{L(e_{i+3})/L}}"', from=2-2, to=1-2]
		\end{tikzcd}\]
		where $\text{ev}_{P_i}$ and $\text{ev}_{P_{i+3}}$ are the evaluations of the residue at $P_i$ and $P_{i+3}$, respectively. This completes the proof of  (3).
		
		(4) If $L=\q$, then $T\cong \gm^2$ so that $\Br(T)/\Br(\q)\cong \q/\z$ and $\Br_1(T)/\Br(\q)=0$. Note that $A_2\in \Br(T)[2]$ since $A_2$ can only ramify along $\ell_1, \ell_2$ or $\ell_3$ by construction. One sees clearly that $A_2\notin \Br(\q)$, so that $A_2$ indeed generates $(\Br(T)/\Br(\q))[2]$. Now by purity 
		(see e.g. \cite[Theorem 3.7.3]{CTSK}), we have an exact sequence
		$$0\rightarrow \Br U \rightarrow \Br V \rightarrow \bigoplus_{i=1}^6 \Ho_{\text{ét}}^1 (\q(e_i), \q/\z).$$
		Therefore, by the commutative diagram \eqref{diag:br} in (3), we deduce that $\pi^*A_2 \in \Br(U)$ if and only if $r_i(A_2)=$ for $i=1,2,3$, as desired.
		
		(5) Suppose that $L=\q(\omega)$, and let $\sigma$ be the generator of $\Gal(L/\q)$. Then $T_L\cong \mathbb{G}_{m,L}^2$, so that $\Br(T_L)/\Br(L)\cong \q/\z$ and $\Br_1(T_L)/\Br(L)=0$.  Note that $(f_1/f_3,f_2/f_3)_\omega$ generates $(\Br(T_L)/\Br(L))[3]$. Since $[L:\q]=2$ and  $(f_1/f_3,f_2/f_3)_\omega$ can only ramify along $\ell_1, \ell_2$ or $\ell_3$, we deduce that $A_3\in \Br(\q(\mathbb{P}^2))$ can only ramify along the $\q$-irreducible components of $H$. Note that
		\begin{equation*}
			\begin{split}
				\res_{L(\mathbb{P}^2)/\q(\mathbb{P}^2)} A_3 &= \left( \frac{f_1}{f_3},   \frac{f_2}{f_3}\right)_{\omega}+ \sigma\left( \frac{f_1}{f_3},   \frac{f_2}{f_3}\right)_{\omega}\\ &= \left( \frac{f_1}{f_3},   \frac{f_2}{f_3}\right)_{\omega}+\left( \frac{f_2}{f_3},   \frac{f_1}{f_3}\right)_{\sigma(\omega)}= 2 \left( \frac{f_1}{f_3},   \frac{f_2}{f_3}\right)_{\omega}
			\end{split}
		\end{equation*}
		where the last equality follow by the fact that $(a,b)_{\sigma(\omega)}=(a,b)_{\omega^{-1}}=-(a,b)_{\omega}$ for $a,b\in L(\mathbb{P}^2)$.
		Now, since $(f_1/f_3,f_2/f_3)_\omega\in \Br(T_L)[3]$ is non-trivial, we deduce that 
		$A_3$ is non-trivial in $(\Br(T)/\Br(\q))[3]$. Since the residues of $A_3$ along the irreducible components of $H$ are of order $3$ and $[L:\q]=2$, to show that $A_3\in \Br(U)$ it will suffice to show that $2\cdot (f_1/f_3,f_2/f_3)_\omega\in \Br(U_L)[3]$. Thus,  by the commutative diagram \eqref{diag:br} in (3), this is equivalent to showing that $r_i((f_1/f_3,f_2/f_3)_\omega)=0$ for $i=1,2,3$.
\end{proof}
We now use the proposition to produce explicit examples.  \vspace{5pt}

\textbf{Case I:} $\z/2 \subset \Br(U)$ and $\z/2 \not\subset \Br_1(U)$. 
\vspace{2pt}

Following the construction in Proposition \ref{prop:example}, by (4), we need to find three homogenous linear forms $f_1,f_2, f_3\in \q[X,Y,Z]$ and a choice of six points $\{P_1,\ldots,P_6\}\subset \mathbb{P}^2_{\q}(\q)$ in general position such that $P_i,P_{i+3} \in \ell_i$ and $r_i((f_1/f_3,f_2/f_3))=0$ for $i=1,2,3$. By the residue formula (see e.g. \cite[Theorem 1.4.14]{CTSK}), we compute
	\[  \partial_{\ell_i}\left( \frac{f_1}{f_3},   \frac{f_2}{f_3}\right) =  \left\{
\begin{array}{ll}
	\class(f_2/f_3) \in \q(\ell_1)^\times/(\q(\ell_1)^\times)^2, & \text{if $i=1$,} \\
	\class(f_1/f_3) \in \q(\ell_2)^\times/(\q(\ell_2)^\times)^2, & \text{if $i=2$,} \\
	\class(-f_1/f_2) \in \q(\ell_3)^\times/(\q(\ell_3)^\times)^2, & \text{if $i=3$.} 
\end{array} 
\right. \] 
Therefore, to have that $r_i((f_1/f_3,f_2/f_3))=0$ for $i=1,2,3$, by Proposition \ref{prop:example}(3) we require that
$$\left\{\frac{f_2}{f_3}(P_1), \frac{f_2}{f_3}(P_4), \frac{f_1}{f_3}(P_2), \frac{f_1}{f_3}(P_5), -\frac{f_1}{f_2}(P_3), -\frac{f_1}{f_2}(P_6) \right\} \subset {\q^\times}^2.$$ 
Choose $f_1=X$, $f_2=Y$, and $f_3=Z$. Write $P_i=[0:Y_i:Z_i]$ for $i=1,4$; $P_i=[X_i:0:Z_i]$ for $i=2,5$; and $P_i=[X_i:Y_i:0]$ for $i=3,6$. We need any choice of $\{P_i\}$ so that they lie in general position, and such that
$$\{Y_1/Z_1, Y_4/Z_4, X_2/Z_2, X_5/Z_5, -X_3/Y_3, -X_6/Y_6\} \subset {\q^\times}^2.$$
One easily verifies that the following choices work:
${P_1=[0:1:1],}$ ${P_4=[0:4:1],}$  ${P_2=[9:0:1],}$ ${P_5=[16:0:1],}$ ${P_3=[-25:1:0],}$ and $P_4=[-36:1:0].$

\vspace{8pt}
\textbf{Case II:} $\z/3 \subset \Br(U)$ and $\z/3 \not\subset \Br_1(U)$. 
\vspace{2pt}

Let $L=\q(\omega)$. Following the construction in Proposition \ref{prop:example}, by (5), we need to find three homogenous linear forms $f_1,f_2, f_3\in L[X,Y,Z]$ and a  $\Gal(L/\q)$-stable set of six points $\{P_1,\ldots,P_6\}\subset \mathbb{P}^2_{L}(L)$  in general position such that $P_i,P_{i+3} \in \ell_i$ and $r_i((f_1/f_3,f_2/f_3)_\omega)=0$ for $i=1,2,3$. By the residue formula (see e.g. \cite[Theorem 1.4.14]{CTSK}), we compute
\[  \partial_{\ell_i}\left( \left( \frac{f_1}{f_3},   \frac{f_2}{f_3}\right)_{\omega} \right) =  \left\{
\begin{array}{ll}
	\class(f_2/f_3) \in L(\ell_1)^\times/(L(\ell_1)^\times)^3, & \text{if $i=1$,} \\
	\class(f_1/f_3) \in L(\ell_2)^\times/(L(\ell_2)^\times)^3, & \text{if $i=2$,} \\
	\class(f_1/f_2) \in L(\ell_3)^\times/(L(\ell_3)^\times)^3, & \text{if $i=3$.} 
\end{array} 
\right. \] 
Therefore, to have that $r_i((f_1/f_3,f_2/f_3))=0$ for $i=1,2,3$, by Proposition \ref{prop:example}(3) we require that
$$\left\{\frac{f_2}{f_3}(P_1), \frac{f_2}{f_3}(P_4), \frac{f_1}{f_3}(P_2), \frac{f_1}{f_3}(P_5), \frac{f_1}{f_2}(P_3), \frac{f_1}{f_2}(P_6) \right\} \subset {L^\times}^3.$$ 
Choose $$f_1=X+\omega Y+Z, \> f_2=X+\omega^2 Y+Z \text{ and } f_3=Z.$$ We start by choosing two conjugate points $P_1\in \ell_1$ and $P_2\in \ell_2$ such that $(f_2/f_3)(P_1)$ and $(f_1/f_3)(P_2)$ are cubes in $L$. We claim that the two conjugate points
\begin{equation*}
	P_1=[-\omega^2:1:\omega^2-\omega] \text{ and } P_2=[-\omega:1:\omega-\omega^2]
\end{equation*}
work. Indeed, one verifies easily that $P_1\in \ell_1$, $P_2\in \ell_2$,  $(f_2/f_3)(P_1)=1$ and $(f_1/f_3)(P_2)=1$.
We now choose a different pair of conjugate points $P_4\in \ell_1$ and $P_5\in \ell_2$ such that $(f_2/f_3)(P_4)$ and $(f_1/f_3)(P_5)$ are cubes in $L$. We claim that the two conjugate points
\begin{equation*}
	P_4=[(\omega-\omega^2)/8-\omega:1:(\omega^2-\omega)/8] \text{ and } P_5=[(\omega^2-\omega)/{8}-\omega^2:1:(\omega-\omega^2)/{8}]
\end{equation*}
work. Indeed, a straightforward computation shows that $P_4\in \ell_1$, $P_5\in \ell_2$,  $(f_2/f_3)(P_4)=8$ and $(f_1/f_3)(P_5)=8$.

Finally, we chose two $\q$-points $P_3,P_6\in \ell_3$ such that $\{P_1,\ldots, P_6\}$ are in general position, and so that $(f_1/f_2)(P_3)$ and $(f_1/f_2)(P_6)$ are cubes in $L$. We claim that the choices 
\begin{equation*}
	P_3=[37/20:1:0] \text{ and } P_6=[1:0:0]
\end{equation*}
work. Indeed, by a straightforward computation, one verifies that $$(f_1/f_2)(P_3)= \left( \frac{8+3\omega}{7} \right)^3 \text{ and } (f_1/f_2)(P_6)=1.$$
Moreover, one can verify that the points $\{P_1,\ldots, P_6\}$ are indeed in general position (e.g. by Magma). This completes the proof of Theorem \ref{thm:DescentToQ}. \hfill \qed

\section{An integral Brauer-Manin obstruction example}\label{sec:example}
Let $U$ be a smooth affine surface over $\q$ and let $\mathcal{U}$ be a $\z$-model for $U$. Consider the adelic space $\mathcal{U}(\mathbb{A}_{\z}):= U(\mathbb{R})\times \prod_{p\neq \infty} \mathcal{U}(\z_p)$. If $\mathcal{U}(\mathbb{A}_{\z})\neq \emptyset$ and $\mathcal{U}(\z)=\emptyset$, we say that the \textit{integral Hasse principle} fails for $\mathcal{U}$.  An adaptation of the Brauer-Manin obstruction by Colliot-Thélène and Xu \cite{CTXU} extended the use of the Brauer group to study the integral Hasse principle. More precisely, one may compute $\mathcal{U}(\mathbb{A}_{\z})^{\Br}$, the subset of $\mathcal{U}(\mathbb{A}_{\z})$ that is orthogonal to $\Br(U)$, and whenever this set is empty and $\mathcal{U}(\mathbb{A}_{\z})\neq \emptyset$ one says that there is an integral Brauer-Manin obstruction to the integral Hasse principle on $U$.

Recall the $\z$-scheme 
\begin{equation}\label{eqn:defnU}
	\mathcal{U}:\> 9 x^3+ y^3 = z^2 + 3 \quad \subset \mathbb{A}^3_{\z}.
\end{equation}
In this section we prove Theorem \ref{thm:IMBO}. We start with the following proposition.

\begin{prop}\label{prop:Uexm}
	Let
	$$ X:\> 9 X^3+ Y^3 = Z^2W + 3W^3 \quad \subset \mathbb{P}^3_{\Q}$$
	be the  obvious compactification of $U:=\mathcal{U}_{\q}$. Then the following statements hold.
	\begin{enumerate}
		\item $X$ is a smooth cubic surface over $\q$.
		\item The hyperplane section $H:=\{W=0\}$ is a union of three geometric lines with splitting field $\q(\omega,\sqrt[3]{-9})$, where $\omega=(-1+\sqrt{-3})/2$. Moreover, the three lines meet at the Eckard point $[0:0:1:0]\in X(\q)$.
		\item $U$ is the complement of $H$ in $X$, and $U(\q)\neq \emptyset$.
		\item The map $\Br(X)\rightarrow  \Br(U)$ is an isomorphism so that $\Br(U)/\Br_1(U)=0$. Moreover,  $\Br(U)/\Br(\q) \cong \z/3\z$ and is generated by the class of the degree $3$ cyclic algebra
		$$B:= \Cor_{L(X_L)/\q(X)}\left( -9, \frac{Z+W\sqrt{-3}}{W} \right)_{\omega} \text{ in } \Br(\q(X))[3],$$
		where $L=\q(\omega)$.
		\item $\mathcal{U}(\mathbb{A}_{\z})\neq \emptyset$.
	\end{enumerate}
\end{prop}
\begin{proof}
	(1) and (2) follow easily.
	
	(3) By using a computer, one sees that there are indeed rational solutions, for example $[-3:39:1:27]\in U(\q)$.
	
	(4) Since $H$ is the union of three geometric lines meeting at an Eckard point, by Theorem \ref{thm:Main}(iii) we deduce that $\Br(U)/\Br_1(U)=0$. Since ${\Br(X)/\Br(\q)\cong \Ho^1(\q, \pic(\overline{X}))},$ a straightforward Magma computation shows that $\Br(X)/\Br(\q)\cong \z/3\z$. By looking at the table in Proposition \ref{prop:AlgBr3lines}(i), we conclude that $\Br(U)/\Br(\q)\cong\z/3$ as well.
	
	We now show that the element $B\in \Br(\q(X))[3]$ generates $\Br(X)/\Br(\q)$. Let ${A:=\res_{L(X_L)/\q(X)} B \in \Br(X_L)}$. If ${\partial_D: \Br(\q(X)) \rightarrow \Ho^1(\q(D),\q/\z)}$ is the residue map along an irreducible divisor $D$ of $X$, and ${\partial_V: \Br(L(X_L)) \rightarrow \Ho^1(L(V),\q/\z)}$ is the residue map along an irreducible divisor $V$ of $X_L$ above $D$ of multiplicity $m_V$, then $$\res_{L(V)/k(D)}(\partial_D(B)) = m_V \cdot \partial_V(A).$$
	As $[L:\q]=2$, we have $m_V\in \{1,2\}$ and $\Cor_{L(V)/k(D)}\circ \res_{L(V)/k(D)}$ is multiplication by $1$ or $2$. Since $B$ is a Brauer element of order $3$, we conclude that if $A$ is has trivial residue along all irreducible divisors of $X_L$ then $B$ has trivial residue along all irreducible divisors of $X$. Thus, by e.g. \cite[Theorem 3.7.3]{CTSK}, we conclude that it is enough to show that $A\in \Br(X_L)$ and that $A\notin \Br(L)$. 
	
	We show that $A\in \Br(X_L)$ by proving that the residue along all irreducible divisors of $X_L$ is trivial. Let $\sigma \in \gal(L/\q)$ be the non-trivial element. We may write 
	\begin{equation}
		\begin{split}
			A &=\left( -9, \frac{Z+W\sqrt{-3}}{W} \right)_{\omega}+\sigma\left( -9, \frac{Z+W\sqrt{-3}}{W} \right)_{\omega} 
			\\&= \left( -9,\frac{Z+W\sqrt{-3}}{W} \right)_{\omega} - \left( -9,\frac{Z-W\sqrt{-3}}{W} \right)_{\omega}
			\\&= \left( -9,\frac{Z+W\sqrt{-3}}{Z-W\sqrt{-3}} \right)_{\omega}
		\end{split}
	\end{equation}
	using that $(a,b)_{\sigma(\omega)}=(a,b)_{\omega^{-1}}=-(a,b)_{\omega}$ for $a,b\in L(X_L)$.
	It is clear by this description of $A$ that it can only ramify along the irreducible divisors ${D_1:=\{Z+W\sqrt{-3}=0\} }$ and $D_2:=\{Z-W\sqrt{-3}=0\}$. Since $-9=(X/Y)^3$ in $L(D_1)$ and $L(D_2)$, by applying the residue formula \cite[(1.18) p 32]{CTSK} one easily computes the residue to be trivial along $D_1$ and $D_2$. This shows that $A\in \Br(X_L)$. Finally, in the proof Theorem \ref{thm:IMBO} below we compute that $\mathcal{U}(\mathbb{A}_{\z})^B=\emptyset$ which implies that $B$ is not in the image of $\Br(\q)\hookrightarrow \Br(X)$.

	(5) First, $U(\mathbb{R})\neq \emptyset$ since $[1: -\sqrt[3]{9}:0:0]\in U(\mathbb{R})$. It easy to see that $\mathcal{U}$ has good reduction outside of $\{2,3\}$. Then taking $X=0$ and $W=1$ one obtains the elliptic curve $E: \>Z^2=Y^3-3$ over $\q$. Note that $Y=2$ and $Z=0$ gives a solution mod $5$. By the Hasse-Weil bounds, we deduce that $E(F_p)\neq \emptyset$ for $p>5$. Therefore, we by Hensel's lemma, we may lift to a point in $\mathcal{U}(\z_p)$ for $p\geq 5$. 
	
	Suppose that $p=2$. Taking $Z=W=0$ and $Y=1$, we obtain the equation $9X^3+1=0$. Setting $f(X):=9X^3+1$ and observing that $f(1)\equiv 0 \mod 2$ and $f'(1)\equiv 1 \mod 2$, we deduce by Hensel's lemma that a solution for $f(X)=0$ exists over $\z_2$, which implies that $\mathcal{U}(\z_2)\neq \emptyset$. Finally, for $p=3$, we take $X=0$ and $Y=W=1$ to obtain the equation $Z^2+2=0$. Setting $f(X):=Z^2+2$ and observing that $f(1)\equiv 0 \mod 3$ and $f'(1)\equiv 2 \mod 3$, we deduce by Hensel's lemma that a solution for $f(X)=0$ exists over $\z_3$, which implies that $\mathcal{U}(\z_3)\neq \emptyset$.
\end{proof}

Let $\inv_p:\Br(\q_p)\rightarrow \q/\z$ be the local invariant map at $p$. For $p=\infty$, we sometimes write $\q_\infty$ for $\mathbb{R}$. For $B\in \Br(U)$ and $x_p\in \mathcal{U}(\z_p) \subset U(\q_p)$, we define $B(x_p)\in \Br(\q_p)$ to be the pullback of $B$ along the map $x_v\rightarrow U$. We define the set 
$$\mathcal{U}(\mathbb{A}_{\z})^B:= \{ x=(x_p) \in \mathcal{U}(\mathbb{A}_{\z})\> :\> \sum_p \inv_p B(x_p) = 0\}.$$
For ease of notation, we write $\res_{L/\q}$ for $\res_{L(X_L)/\q(X)}$ in what follows.

\subsection{Proof of Theorem \ref{thm:IMBO}}
	Let $B\in \Br(U)$ be the element in Proposition \ref{prop:Uexm}(4). We want to show that
	$\mathcal{U}(\mathbb{A}_{\z})^B=\emptyset.$
	Let $L=\q(\omega)$ and set $A:=\res_{L/\q} B \in \Br(U_L)\cong \Br(X_L)$. Since $[L:\q]=2$, we have $$\sum_{p} \sum_{v \mid p} \inv_v A(x_p)= 2\cdot \left( \sum_p \inv_p B(x_p) \right) \in (1/3)\z/\z \subset \q/\z$$
	for all $x=(x_p)\in \mathcal{U}(\mathbb{A}_{\z})$.
	Therefore, it suffices to show that $\sum_{p} \sum_{v \mid p} \inv_v A(x_p)\neq 0$ for all $x=(x_p)\in \mathcal{U}(\mathbb{A}_{\z})\subset U_L(\mathbb{A}_L)$. Let $\sigma \in \gal(L/k)$ be the non-trivial element. 
	Taking $x=X/W$, $y=Y/W$, $z=Z/W$, we may write $$A=\left( -9, z+\sqrt{-3} \right)_{\omega}+\sigma(\left( -9, z+\sqrt{-3} \right)_{\omega})= \left( -9, z+\sqrt{-3} \right)_{\omega} - \left( -9, z-\sqrt{-3} \right)_{\omega}. $$
	
	Let $\mathfrak{o}_L$ be the ring of integers of $L$ and set $\mathcal{U}_L:=\mathcal{U}\times_{\z} \mathfrak{o}_L$. Observe that $\mathcal{U}_L$ has good reduction outside of the primes lying above $2$ or $3$. Moreover, the Brauer element $A$ has good reduction at the primes of $L$ not lying above $2,3$, with respect to the model $\mathcal{U}_L$. Therefore,  for all $v \nmid 2, 3$ and $x_v\in \mathcal{U}_L(\mathfrak{o}_v)$ we have $A(x_v)\in \Br(\mathfrak{o}_v)=0$, which implies that $\inv_v A(x_v)=0$;  in particular, this holds for $x_p\in \mathcal{U}(\z_p)$ for $p\neq 2,3$. Now, since $-9$ is a cube in $\q_2$ we see that $A=0$ in $\Br(X_{L_2})$, where $L_2$ is the completion of $L$ at the place above $2$. Therefore, we have $$\sum_{p} \sum_{\mathfrak{p} \mid p} \inv_{\mathfrak{p}} A(x_p)= \inv_{\mathfrak{p}}\left( -9, z+\sqrt{-3} \right)_{\omega} - \inv_{\mathfrak{p}}\left( -9, z-\sqrt{-3} \right)_{\omega} $$
	where $\mathfrak{p}$ is the prime of $L$ above $3$. We identify $(1/3)\z/\z$ with $\z/3\z$ via multiplication by $3$ and we write $\left( -9, z+\sqrt{-3} \right)_{\omega,\mathfrak{p}}= 3 \cdot \inv_{\mathfrak{p}}\left( -9, z+\sqrt{-3} \right)_{\omega}$ for the image of $\inv_{\mathfrak{p}}\left( -9, z+\sqrt{-3} \right)_{\omega}$ in $\z/3\z$.
	
	We now use the formulas in \cite[\S 4, p. 34]{CKS} to compute the symbols $\left( -9, z+\sqrt{-3} \right)_{\omega,\mathfrak{p}}$ and $\left( -9, z-\sqrt{-3} \right)_{\omega,\mathfrak{p}}$.  Set $\lambda = 1-\omega$, which is a uniformizer for $\mathfrak{o}_{\mathfrak{p}}$, the ring of integers of the local field $L_{\mathfrak{p}}$. Then by \cite[(68), p. 34]{CKS} we have
	\begin{equation}\label{form1}
		\left( \omega, 1+a \lambda + b\lambda^2 +\cdots \right)_{\omega,\mathfrak{p}}= a+ a^2 + b \in \z/3\z
	\end{equation}
	and by \cite[(70), p. 34]{CKS} we have
	\begin{equation}\label{form2}
		\left( \lambda, 1+a \lambda + b\lambda^2 +c\lambda^3 + \cdots \right)_{\omega,\mathfrak{p}}= (a-a^3)/3 + ab - c  \in \z/3\z.
	\end{equation}
	
	Let $x_3=(x,y,z)\in \mathcal{U}(\z_3)$. Then by reducing \eqref{eqn:defnU} mod $9$, and noting that the only cubes mod $9$ are $\{1,8\}$, we see that $z^2 \equiv 5,7 \mod 9$. Since only $7$ is a quadratic residue modulo $9$, we deduce that $z\equiv \pm 4 \mod 9$. Now note that $\sqrt{-3}=2\omega+1=3-2\lambda$, and by \cite[(72), p. 34]{CKS} we have  $3=-\omega^2 \lambda^2$. Therefore, we deduce that either $z+\sqrt{-3}\equiv -1-2\lambda \mod \lambda^4$ or $$z+\sqrt{-3}\equiv 7-2\lambda=1+2(3)-2\lambda = 1-2\omega^2\lambda^2-2\lambda = 1-2\lambda -2\lambda^2+\lambda^3 \mod \lambda^4.$$
	As $-9= \omega^4 \lambda^4 \equiv \omega \lambda \in L_{\mathfrak{p}}^*/ (L_{\mathfrak{p}}^*)^3$ we have that $$\left( -9, z+\sqrt{-3} \right)_{\omega,\mathfrak{p}}=\left(\omega \lambda , z+\sqrt{-3} \right)_{\omega,\mathfrak{p}}= \left(\omega  , z+\sqrt{-3} \right)_{\omega,\mathfrak{p}}+ \left( \lambda , z+\sqrt{-3} \right)_{\omega,\mathfrak{p}},$$
	and similarly for $\left( -9, z-\sqrt{-3} \right)_{\omega,\mathfrak{p}}$.
	
	Suppose that $z+\sqrt{-3}\equiv-1-2\lambda \mod \lambda^4$. Since $-1-2\lambda= -(1+2\lambda) \mod \lambda^4$ and $-1$ is a cube in $\mathfrak{o}_{\mathfrak{p}}$, by applying formula \eqref{form1}  we compute 
	\begin{equation}\label{eqn:-1,omega}
		\left(\omega  , z+\sqrt{-3} \right)_{\omega,\mathfrak{p}}=\left(\omega , 1+2\lambda + 0 \lambda^2 + 0 \lambda^3+ \cdots \right)_{\omega,\mathfrak{p}}= 0 \in \z/3
	\end{equation}
	and by applying formula \eqref{form2} we compute 
	\begin{equation}\label{eqn:-1,lambda}
		\left(\lambda  , z+\sqrt{-3} \right)_{\omega,\mathfrak{p}}=\left( \lambda , 1+2\lambda+ 0 \lambda^2 + 0\lambda^3 + \cdots \right)_{\omega,\mathfrak{p}}= 1 \in \z/3.
	\end{equation}
	This shows that if $z\equiv-4 \mod 9$, then $\left( -9, z+\sqrt{-3} \right)_{\omega,\mathfrak{p}}=0+1=1 \in \z/3$.
	
	Suppose that $z+\sqrt{-3}\equiv 1-2\lambda -2\lambda^2+\lambda^3 \mod \lambda^4$. By applying formula \eqref{form1}  we compute 
	\begin{equation}\label{eq:7,omega}
		\left(\omega  , z+\sqrt{-3} \right)_{\omega,\mathfrak{p}}=\left(\omega , 1-2\lambda -2\lambda^2+\lambda^3 + \cdots \right)_{\omega,\mathfrak{p}}= 0 \in \z/3
	\end{equation}
	and by applying formula \eqref{form2} we compute 
	\begin{equation}\label{eq:7,lambda}
		\left(\lambda  , z+\sqrt{-3} \right)_{\omega,\mathfrak{p}}=\left( \lambda , 1-2\lambda -2\lambda^2+\lambda^3 + \cdots \right)_{\omega,\mathfrak{p}}= 2 \in \z/3.
	\end{equation}
	This shows that if $z\equiv 4 \mod 9$, then $\left( -9, z+\sqrt{-3} \right)_{\omega,\mathfrak{p}}=0+2=2 \in \z/3$. To summarize, we have
	\[ \left( -9, z+\sqrt{-3} \right)_{\omega,\mathfrak{p}} =  \left\{
	\begin{array}{ll}
		1  \in \z/3, & \text{if $z \equiv -4 \mod 9$} \\
		2  \in \z/3, & \text{if $z \equiv 4 \mod 9$} \\
	\end{array} 
	\right. \]

	Now we compute $\left( -9, z-\sqrt{-3} \right)_{\omega,\mathfrak{p}}$. If $z \equiv -4 \mod 9$, we have $z-\sqrt{-3}\equiv -7+2\lambda = -(7-2\lambda) \mod \lambda^4$. Since $-1$ is a cube in $\mathfrak{o}_{\mathfrak{p}}$ we deduce by \eqref{eq:7,omega} and \eqref{eq:7,lambda} that $\left( -9, z-\sqrt{-3} \right)_{\omega,\mathfrak{p}}= 2 \in \z/3.$
	If $z \equiv 4 \mod 9$, we have ${z-\sqrt{-3}}\equiv 1+2\lambda = {-(-1-2\lambda) \mod \lambda^4}$. Thus, we deduce by \eqref{eqn:-1,omega} and \eqref{eqn:-1,lambda} that ${\left( -9, z-\sqrt{-3} \right)_{\omega,\mathfrak{p}}= 1 \in \z/3.}$ Therefore, we have 
	\[ -\left( -9, z-\sqrt{-3} \right)_{\omega,\mathfrak{p}} =  \left\{
	\begin{array}{ll}
		1 \in \z/3, & \text{if $z \equiv -4 \mod 9$} \\
		2 \in \z/3, & \text{if $z \equiv 4 \mod 9$} \\
	\end{array} 
	\right. \]	
	
	Finally, we conclude that 
	\[ \inv_{\mathfrak{p}} A(x_3)= \left( -9, z+\sqrt{-3} \right)_{\omega,\mathfrak{p}} - \left( -9, z-\sqrt{-3} \right)_{\omega,\mathfrak{p}}= \left\{
	\begin{array}{ll}
		2 \in \z/3, & \text{if $z \equiv -4 \mod 9$} \\
		1 \in \z/3, & \text{if $z \equiv 4 \mod 9$} \\
	\end{array} 
	\right. \]	
	This shows that $\sum_{p} \sum_{v \mid p} \inv_v A(x_p)\neq 0$ for all $x=(x_p)\in \mathcal{U}(\mathbb{A}_{\z})$, which completes the proof. \hfill \qed

\end{document}